\theoremstyle{plain}
\newtheorem{theorem}{Theorem}[section]
\newtheorem{lemma}[theorem]{Lemma}
\newtheorem{corollary}[theorem]{Corollary}
\theoremstyle{definition}
\newtheorem{definition}[theorem]{Definition}
\newtheorem{remarks}[theorem]{Remarks}
\newtheorem{remark}[theorem]{Remark}
\def\<#1>{\langle\, #1\,\rangle}
\newcommand\restr[2]{{
  \left.\kern-\nulldelimiterspace 
  #1 
  \vphantom{\big|} 
  \right|_{#2} 
  }}
\newcommand{\R}{\mathbb{R}}
\newcommand{\C}{\mathbb{C}}
\newcommand{\Z}{\mathbb{Z}}
\newcommand{\N}{\mathbb{N}}
\newcommand{\B}{\mathscr{B}}
\newcommand{\A}{\mathscr{A}}
\newcommand{\Cl}{\mathfrak{Cl}}
\newcommand{\cnaught}{{C_0(G)}}
\newcommand{\luc}{\mathit{LUC}(G)}
\font\seis=cmr6
\def\CB{\mathscr{CB}}
\def\luc{{\seis{\mathscr{LUC}}}}
\def\ruc{{\seis{\mathscr{RUC}}}}
\def\wap{{\seis{\mathscr{WAP}}}}
\def\B{{\mathscr{B}}}
\def\ap{{\seis{\mathscr{AP}}}}
\def\lc{{\seis{\mathscr{LC}}}}
\begin{document}
\title[Interpolation Sets and Quotients of Function Spaces]%
      {Interpolation sets and
       the size of quotients of function spaces on a locally compact group}

\author[Filali and Galindo]{M. Filali \and  J. Galindo}
\thanks{Research of  the second named  author  supported by the Spanish Ministry of Science (including FEDER funds), grant
MTM2008-04599/MTM and  Fundaci\'o Caixa Castell\'o-Bancaixa, grant
P1.1B2008-26.}

\keywords{almost periodic functions, Fourier-Stieltjes algebra,
weakly almost periodic, semigroup compactification, almost periodic
compactification, almost periodic  compactification, interpolation
sets}

\address{\noindent Mahmoud Filali,
Department of Mathematical Sciences\\University of Oulu\\Oulu,
Finland. \hfill\break \noindent E-mail: {\tt mfilali@cc.oulu.fi}}
\address{\noindent Jorge Galindo, Instituto Universitario de Matem\'aticas y
Aplicaciones (IMAC)\\\\ Universidad Jaume I, E-12071, Cas\-tell\'on,
Spain. \hfill\break \noindent E-mail: {\tt jgalindo@mat.uji.es}}

\subjclass[2010]{Primary 22D15; Secondary 43A46, 43A15, 43A60, 54H11}

\date{\today}
\maketitle
\begin{abstract}
We devise a fairly general method for estimating the size of
quotients between algebras of functions on a locally compact group.
This method is based on   the concept of interpolation sets and
unifies the approaches followed by many authors to obtain particular
cases.

We find in this way that  there is a linear
 isometric copy of $\ell_\infty(\kappa)$ in each of the following
quotient spaces:
\begin{itemize}
\item[--] $\wap_0(G)/C_0(G)$ whenever $G$ contains a subset $X$ that is an $E$-set  (see the definition in the paper)
and $\kappa=\kappa(X)$ is  the minimal number of
compact sets required to cover $X$. In particular,  $\kappa=\kappa(G)$ when $G$ is a $SIN$-group.
\item[--] $\wap(G)/\B(G)$,  when $G$ is any locally compact group and $\kappa=\kappa(Z(G))$ and $Z(G)$ is the centre of $G$, or when $G$ is either an $IN$-group or  a nilpotent group, and $\kappa=\kappa(G)$.
\item[--] $\wap_0(G)/\B_0(G)$, when $G$ and $\kappa$ are as in the foregoing item.
\item[--]
$\CB(G)/\luc(G)$, when  $G$ is any locally compact group that is neither compact nor discrete and
 $\kappa=\kappa(G)$.
\end{itemize}
 \end{abstract}

\section{Introduction}
The main focus throughout the paper will be on $C^*$-algebras of functions on a  locally
compact group $G$ with identity $e$. If $\ell_\infty(G)$ denotes the $C^*$-algebra of bounded,
scalar-valued functions on $G$ with the  supremum norm, our concern will be with the following
subalgebras of $\ell_\infty(G)$: the algebra
$\CB(G)$ of continuous bounded functions, the algebra  $\luc(G)$ of
bounded right uniformly continuous functions, the  algebra $\wap(G)$
of weakly almost periodic functions, the Fourier-Stieltjes algebra
$B(G)$, the uniform closure of $B(G)$ denoted by $\B(G)$ and best known as the Eberlein algebra,
the algebra $\ap(G)$ of almost periodic functions,  and the
algebra $C_0(G)\oplus\mathbb C1$,  where $C_0(G)$ consists of the functions in $\CB(G)$ vanishing at infinity.

The spectra of these algebras $\A(G)$ define some of the best-known   semigroup
compactifications in the sense of \cite{BJM}. These are compact right (or left) topological semigroups
$G^\mathcal \A$ having a dense, continuous, homomorphic copy of $G$ contained in their topological centres (i.e, the map
$x\mapsto sx\quad  (x\mapsto xs):G^\mathcal A\to G^\mathcal A$ is continuous for each $s\in G$.) For instance,  the compactification
$G^{\luc}$ is the spectrum of $\luc(G)$, and is usually referred to
as the $\luc(G)$- or $\lc(G)$-compactification of $G$.
It is the largest
semigroup compactification in the sense that
 any other semigroup
compactification is a quotient of $G^{\luc}.$
 When  $G$ is discrete, $G^{\luc}$ and the Stone-\v Cech compactification $\beta G$ are the same.
The $\wap$-compactification $G^\wap$ is the spectrum of $\wap(G)$;
it is the largest semitopological semigroup compactification. The
Bohr or  $\ap$-compactifiaction is the spectrum of $\ap(G)$ and
is the largest topological (semi)group compactification.

The Banach duals of these $C^*$-algebras can also be made into Banach algebras with a
convolution type product extending in most cases that of the group algebra
$L^1(G)$.  We may recall that  $L^\infty(G)$ is the Banach dual of the group algebra $L^1(G)$ and consists of all
scalar-valued functions which are measurable and essentially bounded with respect
to the Haar measure; two functions are identified if they coincide on a locally null
set, and the norm is given by the essential supremum norm.
We may also recall that the product making $L^\infty(G)$ into a Banach algebra is the first (or the second) Arens product on the second dual space $L^1(G)^{**}$
of the group algebra, and that $\luc(G)^*$ may be seen as a quotient Banach algebra of $L^1(G)^{**}.$ For more details, see  for instance \cite{DL}.
These two Banach algebras have been studied extensively in
recent years.
Particular attention
has been given to properties related to Arens regularity of the
group algebra $L^1(G)$ and to the topological centres of $G^{\luc}$,
$\luc(G)^*$ and $L^1(G)^{**}$. For the latest, see \cite{BIP} and the references therein.

 The definitions of all these function algebras will be given in the next section. But for the moment the following diagram summarizes already the inclusion relationships known to hold among these algebras.
See \cite[page 143]{chou82} for the first inclusion;
\cite[Lemma 2.1]{chou82} for the first equality;
\cite{deL1} or \cite[Theorem 4.3.13]{BJM} for the second equality;
 \cite[Corollary 4.4.11]{BJM} or \cite{Bu}   for
 the third inclusion;
 the rest is easy to check.
    \begin{align*}\cnaught\oplus\ap(G)&\subseteq\B(G)=\ap(G)\oplus \B_0(G)\subseteq
      \wap(G)=\ap(G)\oplus \wap_0(G)\\&\subseteq
      \luc(G)\cap\ruc(G)\subseteq \luc(G)\subseteq \CB(G)\\&\subseteq L^\infty(G).
    \end{align*}

When $G$ is finite, the diagram is trivial. When $G$ is infinite and
compact, the diagram reduces to $\CB(G)\subseteq L^\infty(G)$.

%
The task of comparing these algebras and estimating the sizes of the quotients formed among them  has already been taken by many authors. We now give a brief review of what is known in this respect.

In the review below as well as in our study of quotients between the above algebras the compact covering number will appear at several points. We recall that the \emph{compact covering number} of a topological space $X$ is the smallest cardinal number $\kappa(X)$ of compact subsets of $X$ required to cover $X$.

\medskip

{\sc Comparing $L^\infty(G)$ with its subspaces}. Already in 1961, Civin and Yood proved in their seminal paper \cite{CY} that
the quotient space $L^\infty(G)/\CB(G)$ is infinite-dimensional for
 any non-discrete locally compact Abelian group  and deduced that the radical
of the Banach algebra $L^\infty(G)^*$ (with one of the Arens products as a product) is also infinite-dimensional.

This idea was  pushed further by Gulick in \cite[Lemma 5.2]{G} when $G$ is Abelian, and  proved that the quotient  $L^\infty(G)/\CB(G)$ is even non-separable
and so is the radical of $L^\infty(G)^*.$ Then Granirer proved in \cite{Gr} the same results for any non-discrete locally compact group.

A decade later, Young produced, for any infinite locally compact group $G$, a function in $L^\infty(G)$ which is not
in $\wap(G)$, proving the non-Arens regularity of the group algebra $L^1(G)$ for any such a group, see \cite{Y}.

There was also \cite[Theorem 4.2]{BF1}  where  the quotient $\luc(G)/\wap(G)$
was seen to contain  a linear isometric copy of $\ell_\infty(\kappa),$ where
$\kappa$ is the compact covering of $G$.
A fortiori, the quotient
$L^\infty(G)/\wap(G)$ contains the same copy, a fact that was used in \cite[Theorem 4.4]{BF1} to deduce
 that the group algebra is even extremely non-Arens regular
in the sense of Granirer, whenever $\kappa $ is larger than or equal to the minimal cardinal $w(G)$ of  a basis of neighbourhoods at the identity.

It was also proved  in  \cite[Section 4]{BF1}
that  the quotient
$L^\infty(G)/\CB(G)$ always contains a linear isometric copy of
$\ell_\infty$, yielding extreme non-Arens regularity for the group algebra of compact metrizable groups.
Due to  a result by Rosenthal proved in \cite[Proposition 4.7, Theorem 4.8]{rose70}, larger copies of $\ell_\infty$ cannot be expected in $L^\infty(G)$ when $G$ is compact.
The question on extreme non-Arens regularity of the group algebra was recently settled by   the authors of the present paper using a technique inspired by Theorem \ref{main:constr}. We actually find  in \cite{FGenar}
that, for any compact group $G$, $L^\infty(G)/\CB(G)$  contains  a copy of $L^\infty(G)$.
This fact together with \cite[Theorem 4.4]{BF1}
gives
that $L^1(G)$ is extremely non-Arens regular for any infinite locally compact group.
\medskip

{\sc Comparing $\CB(G)$ with its subspaces}. In 1966, Comfort and Ross \cite[Theorem 4.1]{CR} compared the spaces $\CB(G)$ and $\ap(G)$ for an arbitrary topological group, and proved that they are equal if and only if $G$ is pseudocompact (i.e., every continuous scalar-valued function on $G$ is bounded).
In 1970, Burckel showed  in \cite{Bu} that $\CB(G)$ and $\wap(G)$ are equal if and only if
$G$ is compact.
In \cite{BB}, Baker and Butcher compared $\CB(G)$ and $\luc(G)$
for  locally compact  groups, and proved that these two spaces are equal
if and only if $G$ is either discrete or compact.
This result was extended recently by Filali and Vedenjuoksu in \cite[Theorem 4.3]{FV} to all topological groups which are not $P$-groups. The author deduced in \cite{FV} that if $G$ is a topological group which  is not a $P$-group,
 then $\CB(G)=\luc(G)$ if and only if $G$ is pseudocompact.
In \cite{Dz}, Dzinotyiweyi showed that the quotient $\CB(G)/\luc(G)$ is non-separable if $G$ is a non-compact, non-discrete, locally compact group.
This theorem was generalized in \cite[Theorem 3.1]{BF1} and \cite[Theorem 4.1]{BF2}, where
$\CB(G)/\luc(G)$) was seen to contain in fact a linear isometric copy of $\ell_\infty$ whenever $G$ is a non-precompact topological group
which is not a $P$-group. So this theorem improved actually also Dzinotyiweyi's result for locally compact groups.
For non-discrete, $P$-groups, the quotient $\CB(G)/\luc(G)$ was seen to be trivial in the case when for instance $G$
is a Lindel\"of $P$-group (see \cite[Theorem 5.1]{FV}), but may also contain a linear isometric copy of $\ell_\infty$
for some other $P$-groups (see \cite[Theorem 3.3]{BF1}).
In \cite[Theorem 3.1]{BF2}, using a technique due  to Alas (see \cite{OTA}), the quotient space $\CB(G)/\luc(G)$ was also seen to contain a linear isometric copy of $\ell_\infty$
whenever $G$ is a non-$SIN$ topological group.

In the locally compact situation, our answer in the present paper is  precise and definite. We prove, in Section 5, that there is a linear isometric copy of $\ell_\infty(\kappa)$ in $\CB(G)/\luc(G)$,
where as before $\kappa$ is the compact covering $G,$
if and only if $G$ is a neither compact nor discrete.
This leads again to a linear isometric copy
of $\ell_\infty(\kappa)$ into the quotient $L^\infty(G)/ \wap(G)$,
 and of course may be used to deduce again the  extreme non-Arens regularity of  of $L^1(G)$ when $\kappa(G)\ge w(G)\ge\omega$  as in
 \cite[Theorem 4.4]{BF1}.
\medskip

{\sc Comparing $\luc(G)$ with $\wap(G)$}. In 1972, Granirer showed that
$\luc(G)=\wap(G)$ if and only if G is compact \cite{G2}.

It is not difficult to check that $G^\luc$ is a semitopological semigroup (i.e., the  topological centre of $G^\luc$ is the whole of $G^\luc$)
if and only if $\luc(G)= \wap(G)$. The same observation can be
made also for $\luc(G)^*$. This means that
$G^\luc$ or $\luc(G)^*$ is a semitopological semigroup if and only if $G$ is compact, i.e., $G^\luc=G$ is a compact group and $\luc(G)^*$ coincides with the measure algebra $M(G)$.

 More recently, Granirer's result
was deduced by Lau and Pym in  \cite[Proposition 3.6]{LP}
as a corollary of their main theorem on the topological centre of
$G^\luc$ being $G$, and again by Lau and \"Ulger in \cite[Corollary
3.8]{LU} as a corollary of the topological centre of $L^1(G)^{**}$
being $L^1(G)$ \cite{LL}.

Moreover, Granirer showed  in the same paper that if $G$ is
non-compact and amenable, then the quotient $\luc(G)/\wap(G)$
contains  a linear isometric copy of $\ell_\infty$,
and so it is not separable.
 This result was extended by Chou in \cite{chou75} to $E$-groups (see below for definition),  then by Dzinotyiweyi in \cite{Dz} to all non-compact locally compact groups,
 and  generalized by Bouziad and Filali in \cite[Theorem 2.2]{BF1} to all non-precompact topological groups.
 Moreover, as already mentioned above,  this result was improved in \cite[Theorem 4.2]{BF1} when $G$ is a non-compact locally compact group,  by having a copy of $\ell_\infty(\kappa)$
     in the quotient $\luc(G)/\wap(G)$.
\medskip

 {\sc Comparing $\wap(G)$ with its subspaces}.
In the "regular" side of the inclusion diagram, when we compare $\wap(G)$ with
its subspaces, the situation is not simpler. It is true that the
Fourier-Stieltjes algebra $B(G)$ may be dense in $\wap(G)$ (i.e., $\wap(G)=\B(G)$), as in the case
of  minimally weakly almost periodic groups studied  by Veech, Chou and
Ruppert, see \cite{V}, \cite{chou80} and \cite{rup2}. For these groups,
  $\wap(G)=\ap(G)\oplus C_0(G)$.
     However, if $G$ is a non-compact group,   then   $B(G)$ is far
from being dense in $\wap(G)$ in general as it shall soon be explained.

When  comparing $\wap(G)$ with $\ap(G)$ and $C_0(G)$,
  we may recall first that $\wap(G)=\ap(G)\oplus \wap_0(G)$.
  Burckel proved in \cite{Bu} that $C_0(G)\subsetneq
\wap_0(G)$ when $G$ is an Abelian, non-compact, locally compact
group. In \cite{chou75}, Chou considered $E$-groups and proved that the
quotient $\wap_0(G)/C_0(G)$ contains a linear isometric copy of
$\ell_\infty$.
 In Section \ref{wapapc0}, we  improve
this result   by showing that $\ell_\infty$ may be
replaced by an isometric copy of the larger space
$\ell_\infty(\kappa(E))$ in  each of the quotient space $\wap_0(G)/C_0(G)$,
 where $\kappa(E)$ is the compact covering of the
$E$-set contained in $G$. So when $G$ is an $SIN$-group, these quotients contain a copy of $\ell_\infty(\kappa(G)).$
For the same class of groups, we prove also that the quotient $\wap(G)/(\ap(G)\oplus C_0(G))$ is non-separable.

  In Section 5, we deal with the non-compact, $IN$-groups,  and with non-compact nilpotent groups. In this class of groups, the results of the previous section shall be considerably improved.
     Rudin proved in \cite{ru} that $\B(G)\subsetneq
\wap(G)$ if $G$ is  a locally compact Abelian group and contains a
closed discrete subgroup which is not of bounded order. This was
followed by \cite{ra}, where Ramirez extended Rudin's result to any non-compact,
locally compact, Abelian group. Then in \cite{chou82}, Chou extended and
strengthened the theorem to all non-compact $IN$-groups and nilpotent groups by
showing that the quotient $\wap(G)/\B(G)$ contains a linear
isometric copy of $\ell_\infty$.

 We shall strengthen Chou's result in Section 5 by showing that, in these cases,
there is in fact a linear isometric copy of $\ell_\infty(\kappa)$ in the quotient spaces
$\wap(G)/\B(G),$ $\wap(G)/(\ap(G)\oplus C_0(G))$ and $\wap_0(G)/\B_0(G),$
where $\kappa$ is as before the compact covering of $G.$  Our method of proof also shows that $\wap(G)/\B(G)$ always contains a copy of $\ell_\infty(\kappa(Z(G)))$.

It is worthwhile to note that all this confirms an observation made in \cite[page 216]{BJM}, and gives indeed an indication on the size and
complexity of the $\wap$-compactification $G^{\wap}$ and the Banach
algebra $\wap(G)^*$.

\medskip

{\sc Outline.}
The underlying structure in many of the proofs that estimate the
size of $\A_2(G)/\A_1(G)$ for $C^\ast$-subalgebras $\A_1(G)\subseteq \A_2(G)$ of
$\ell_\infty(G)$, depends on the existence of sets of interpolation
for $\A_2(G)$  that are not sets of  interpolation for $\A_1(G)$ (see for instance \cite{BF1}, \cite{BF2}, \cite{chou75}, \cite{chou82} or  \cite{Dz}). One of the main
objectives of the present paper is to make that structure emerge in
a clear fashion. A first, but essential, step towards this objective
is to work with the right concept of interpolation sets.
 We will use here the general concept of interpolation
 set introduced in \cite{FG} that
extends several related classical ones and
show  how to apply it in
this setting. The resulting interpolation sets are characterized in
\cite{FG} in term of topological group properties, thereby making them
easier to manipulate. We finally illustrate the scope of our
approach by studying some concrete cases. We shall in particular
study  under this light the following quotients: $\wap_0(G)$ by $C_0(G)$ and $\wap(G)$ by
$\ap(G)\oplus C_0(G)$ for $E$-groups,   $\wap(G)$ by $\B(G)$, $\wap(G)$ by
$\ap(G)\oplus C_0(G)$ and $\wap_0(G)$ by $\B_0(G)$
for $IN$-groups and nilpotent groups,  $\CB(G)$ by $\luc(G)$ for locally compact groups.

\subsection{The function algebras}
We start by recalling the definitions of the function algebras we are interested in, for more
details the reader is directed for example to \cite{BJM}.

Let $G$ be a topological group.
For each function $f$ defined on $G$, the left translate $f_s$ of
$f$ by $s\in G$ is defined on $G$ by $f_s(t)=f(st)$. For each $s\in
G$, the left translation operator $L_s\colon \ell_\infty(G)\to
 \ell_\infty(G)$ is defined as $L_s(f)=f_s$. The supremum  norm of an
element $f\in \ell_\infty(G)$ will be denoted as $\|f\|_\infty$.

A function $f\in \ell_\infty(G)$
is  {\it right uniformly continuous} when, if for every $\epsilon>0$,
there exists a neighbourhood $U$ of $e$ such that
\[
|f(s)-f(t)|<\epsilon\quad\text{whenever}\quad st^{-1}\in U.\]
The algebra of right uniformly continuous functions on $G $ is denoted by $\luc(G).$

 A function $f\in\CB(G)$ is {\it almost periodic} when the set of all its
left (equivalently, right) translates  is a relatively norm
compact subset in $\CB(G).$
The algebra of almost periodic functions on $G $ is denoted by $\ap(G).$

A function $f\in\CB(G)$ is {\it weakly almost periodic} when the set of all
its left (equivalently, right) translates makes  a relatively weakly
compact subset in $\CB(G).$
The algebra of weakly almost periodic functions  on $G $ is denoted by $\wap(G).$

The {\it Fourier-Stieltjes algebra} $B(G)$ is the linear span of the set of all continuous positive definite
functions on $G$. Equivalently, $B(G)$ is the space of
coefficients of unitary representations of $G$ when $G$ is locally compact.
 As the Fourier-Stieltjes algebra is not uniformly
closed we will work with the {\it Eberlein algebra} $\B(G)$, which is the uniform closure of $B(G),$
 in symbols
$\B(G)=\overline{B(G)}^{\|\cdot\|_\infty}$.

Let $\mu$ be the unique invariant mean on $\wap(G)$ (see \cite{BJM}, or \cite{Bu}). As stated above,  put \begin{align*}\wap_0(G)&=\{f\in \wap(G):\mu(|f|)=0\},\\
\B_0(G)&=\{f\in \B(G):\mu(|f|)=0\}.
\end{align*}
In \cite[page 143]{chou82}, Chou denoted $B(G)\cap \wap_0(G)$ by $B_c(G)$, and observed that $\B_0(G) =\overline{B_c(G)}$ when $G$ is locally compact.

\subsection{The spectrum as a compactification} Let $G$ be a topological group,  $\A(G)\subseteq \ell_\infty(G)$ be  a unital  $C^\ast$-subalgebra and denote by $G^\A$ the
the spectrum (the set of non-zero multiplicative linear functionals) of $\A(G)$. Equipped with the topology of pointwise convergence,  $G^\A$ becomes a compact Hausdorff topological space. There is a canonical morphism  $\epsilon_\A \colon G\to G^\A$  given by evaluations
\[\epsilon_\A(s)(f)=f(s),\; \text{for every}\;f\in \A(G)\; \text{and}\; s\in G.\] This map
is continuous if and only if $\A(G)\subseteq \CB(G)$, and injective on $G$ if and only if $\A(G)$ separates the
points of $G$. We may recall, for example, that the map $\epsilon_\A$ is injective on $G$
(and in fact a homeomorphism  onto its image in $G^\A$ ) whenever $C_0(G)\subseteq\A(G).$ This is not
a necessary condition since it may also happen that $\epsilon_\A$ is injective when $C_0(G)\cap \A(G)=\{0\}$ as it is the case when  $G$ is a locally compact, maximally almost periodic and $\A(G)=\ap(G)$.
It may also happen that $\epsilon_\A$ is injective on  a given subset $T$ of $G.$ We will then identify
$T$ as a subset of $G^\A.$ This situation occurs when for example $T$ is an $\A(G)$-interpolation set.

The $C^*$-algebra $\A(G)$ is {\it left translation invariant} when $f_s\in\A(G)$ for every $f\in \A(G)$ and $s\in G.$
When $\A(G)$ is left translation invariant, we may define for every
$x\in G^\A$ and $f\in\A(G),$ the function on $G$ by $xf(s)=x(f_s).$
When $1,$ $f_s$ and $xf$ are  in $\A(G)$ for every $s\in G,$ $x\in G^\A$ and $f\in \A(G),$
we say that  $\A(G)$ is {\it admissible}.

When $\A(G)$ is an admissible $C^*$-subalgebra of $\CB(G)$, $G^\A$ can be  equipped with the product
$G^\A$  given by
\[xy(f)=x(yf)\quad \text{for every}\quad x,y\in G^\A\;\text{and}\; f\in\A(G).\] $G^\A$ then becomes a
semigroup compactification of the topological group $G$ in the sense of \cite{BJM}. This means
that $G^\A$ is a compact semigroup having a continuous, dense, homomorphic, image
of $G$ such that the mappings
\[
x\mapsto xy\colon  G^\A\rightarrow  G^\A \,\,\text{ and } \,\,
x\mapsto \epsilon_\A(s)x\colon G^\A\rightarrow  G^\A
\]
are continuous for every $y\in G^\A$ and $s\in G$.

The algebras  $C_0(G)\oplus\C$,  $\ap(G)$, $C_0(G)\oplus \ap(G)$, $\B(G)$,  $\wap_0(G)\oplus\C$,  $\wap(G)$
and $\luc(G)$ are all known to be admissible, see for example \cite{BJM}. But when $G$  is locally compact, $\CB(G)$ is not admissible unless $G$ is either discrete or compact, see \cite{BB} or \cite{FV} for more.

When $G$ is a locally compact group and $\A$ is an admissible  $C^*$-subalgebra of $\luc(G)$, the semigroup compactification $G^\A$ has the {\it the joint continuity property}, that is,
the map \[ (s,x)\mapsto \epsilon_\A(s)x\colon G\times G^\A\rightarrow  G^\A\] is continuous.

A recent account on semigroup compactifications is given in \cite{gali10}.

\subsection{A few words on notation}
All our groups will be  multiplicative and their identity element will be denoted as $e$.
The characteristic function of a set $T$ will be denoted as $1_T$.
If $X$ is a set and $T\subseteq X$, given $f\in \ell_\infty(X)$, we define
$\|f\|_T=\sup\{|f(x)|\colon x\in T\}$ so that $\|f\|_\infty=\|f\|_X$.
The morphism  $\epsilon_\A$ maps  $G$ into $G^\A$ faithfully if $\A(G)$ separates points.  If $X\subseteq G$, we will denote
the closure  of  $\epsilon_\A(X)$ simply as $\overline X^{\A}$,  while  the closure of $X$ in $G$ will be
denoted as  $\overline X.$
The reason for this is that in most of our applications the algebra $\A(G)$ separates points  of $G$ and therefore $\epsilon_\A$ may be used to identify $G$ with a subset of $G^\A$.

A standard application of Gelfand duality identifies $\A(G)$ with $\CB(G^\A)$. Under this identification,
 to  every $f\in \A(G)$ there corresponds $f^\A\in \CB(G^\A)$ in such a way that the following diagram commutes
  \begin{equation}
    \label{eq:com}\xymatrix{
G \ar[r]^{\epsilon_{_{\A}} }\ar[dr]_f & G^{\A} \ar[d]^{f^\A } \\
&  \C }
  \end{equation}
When $\epsilon_\A$ is injective $f^\A$ can be seen as an \emph{extension} of $f$ to $G^\A$.

\section{Interpolation sets and quotients of function spaces}

We begin our work by introducing in precise terms the sets we will be using, and
then we prove the impact they have  in measuring the size of our quotient spaces
$\A_2(G)/\A_1(G)$. This is achieved in Theorem \ref{main:constr}.

It is worthwhile to note that this theorem may also
be applied to obtain most (if not all) of the results concerning the quotient spaces of the various function algebras
mentioned in the introduction;
it is of course necessary at each time to construct the required interpolation sets.

Our final main results in this section and in the rest of the paper concern $C^*$-algebras of bounded functions on a locally compact group, but definitions and properties shall also be proved for
a general Hausdorff topological group whenever this makes sense.

\begin{definition} \label{approx} Let $G$ be a topological group and $\A(G)\subseteq \ell_\infty(G)$.
A subset $T\subseteq G$ is said to be
\begin{enumerate}
  \item an \emph{$\A(G)$-interpolation set} if every bounded function
$f\colon T\to \C$ can be extended to a function $\overline{f}\colon G\to
\C$ such that $\overline{f}\in \A(G)$.
\item an \emph{approximable $\A(G)$-interpolation set} if it is an
 $\A(G)$-interpolation set and for every
 neighbourhood $U$ of $e$, there are open neighbourhoods $
V_1,V_2$ of $e$ with $\overline{V_1}\subseteq V_2\subseteq U$ such
that, for each $T_1\subseteq T$  there is   $h\in \A(G)$ with
$h(V_1T_1)=\{1\}$ and $h(G\setminus (V_2T_1))=\{0\}$.
\end{enumerate}
\end{definition}

\begin{remark}
$\A(G)$-interpolation sets for some concrete algebras $\A(G)\subseteq
\ell_\infty(G)$  have been a frequent object of study, see
\cite{gali10} and  \cite{FG} for more details and references.
See also \cite{GH} for the most recent account on the subject.

Approximable interpolation sets appear in the early 70's as a crucial step in Drury's proof of the union theorem of Sidon sets, see \cite{drury70}. Other well-known interpolation sets are also approximable as for instance \emph{translation-finite sets} considered  by Ruppert in \cite{rup} (and called $R_W$-sets by Chou in \cite{chou90}) that turn to be the approximable $\wap(G)$-interpolation sets of discrete groups,  see \cite{FG} for more on this respect.

When $G$ is discrete, the definition of approximable $\A(G)$-interpolation set is much simpler. In that case
$T\subset G$ is an approximable $\A(G)$-interpolation set if and only if $T$ is an $\A(G)$-interpolation such that $1_T\in \A(G)$, or equivalently, if every function supported on $T$ is in $\A(G)$.

It
should however be reminded that approximable $\A(G)$-interpolation sets
do not  make sense for every $C^\ast$-subalgebra $\A(G)$ of $\ell_\infty(G)$.
For example,  no subset in a non-compact locally compact group
can be an approximable
$\ap(G)$-interpolation set, see \cite[Section 3 and Corollary 4.24]{FG}.
\end{remark}

\subsection{The quotients}
 The following lemma contains
some  elementary consequences of the definitions of
interpolation and approximable interpolation sets.
The identification of $\overline{T}^\A$ with the  Stone-\v Cech compactification of $T$ (with the discrete topology) allows us to use the  powerful property of extreme disconnectedness
of the latter compactification. As the reader will quickly notice this is the key in the arguments leading to the main results in this section.
The main results start with a generalization of a theorem proved by  Chou \cite{chou82} for $B(G)$ (Lemma \ref{choufs}) to  arbitrary $C^*$-subalgebras of $\ell_\infty(G)$ . Along with some rather  technical lemmas, this  provides us with the conditions stated in Theorem \ref{main:constr} and Corollary \ref{cor:main:constr} under which
the quotient $\A_2(G)/\A_1(G)$ ($\A_1(G)\subset \A_2(G)$ being admissible  $C^\ast$-subalgebras  of
$\CB(G)$)  contains a linear isometric copy of $\ell_\infty(\kappa)$
for some cardinal $\kappa.$

\begin{lemma}
  \label{lem:isdisc} Let $G$ be a topological group.
Let   $\A(G)$ be  a $C^\ast$-subalgebra of $\ell_\infty(G)$ with $1\in\A(G)$ and
$T\subseteq G$.
\begin{enumerate}
\item  $T$ is   an $\A(G)$-interpolation set if and only if $\epsilon_\A$ is injective on $T$ and there is a homeomorphism between $\overline{T}^{\A}$ and  $\beta
T_d$, the Stone-\v{C}ech-compactification of $T$ equipped with the discrete topology, that leaves
the points of $T$ fixed.
\item $T$ is   an $\A(G)$-interpolation set if and only if for every pair of subsets $T_1,T_2\subset T$, $T_1\cap T_2=\emptyset$ implies $\overline{T_1}^{\A}\cap \overline{T_2}^{\A}=\emptyset$.
\item If  $T$ is an  $\A(G)$-interpolation set and  $f\colon T\to \C$ is a bounded function, then $f$
 has  an extension
$\overline{f}\in \A(G)$ with $\|\overline{f}\|_\infty=\|f\|_{T}$.
\item If $T$ is an  approximable $\A(G)$-interpolation set, then for every bounded function $h\colon T\to \C$ and  every neighbourhood
$U$ of the identity, there is $f\in \A(G)$  such that
\[\restr{f}{T}=h,\quad  f(G\setminus UT)=\{0\}\quad\text{ and}\quad
\|f\|_\infty=\|h\|_T.\]
\end{enumerate}
\end{lemma}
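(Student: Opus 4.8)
The whole lemma is best read through the Gelfand duality recalled before the statement, under which $\A(G)\cong C(G^\A)$, every $f\in\A(G)$ becomes the continuous function $f^\A$ on the compact Hausdorff space $G^\A$ with $f=f^\A\circ\epsilon_\A$, and (by density of $\epsilon_\A(G)$ in $G^\A$) the supremum norm of $f$ agrees with that of $f^\A$ over $G^\A$. The plan is to translate each item into a statement about the closed subspace $\overline T^\A\subseteq G^\A$ and to recognise $\overline T^\A$, under the interpolation hypothesis, as a copy of $\beta T_d$. The two classical characterisations of the Stone--\v Cech compactification of a discrete space --- by continuous extension of \emph{all} bounded functions, and by disjoint subsets having disjoint closures --- then carry most of the weight, while the Tietze extension theorem (available since $G^\A$ is compact, hence normal) transports functions from $\overline T^\A$ back to $\A(G)$.

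For (i), in the forward direction I would first show $\epsilon_\A$ is injective on $T$ by separating distinct $s,t\in T$ with the $\A(G)$-extension of the function that is $0$ at $s$ and $1$ at $t$; extending instead the characteristic function $1_{\{s\}}$ shows $\epsilon_\A(s)$ is isolated in $\epsilon_\A(T)$, so $\overline T^\A$ is genuinely a compactification of the \emph{discrete} space $T$. Restricting to $\overline T^\A$ the $C(G^\A)$-extension furnished by interpolation shows every bounded function on $T$ extends continuously to $\overline T^\A$, and the universal property characterising $\beta T_d$ yields the desired homeomorphism fixing $T$. For the converse I would extend a given bounded $f\colon T\to\C$ first to $\beta T_d=\overline T^\A$ and then, by Tietze, to a continuous function on $G^\A$, i.e.\ to an element of $\A(G)$ restricting to $f$ on $T$.

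Item (ii) rests on the same identification. Taking the pairs $T_1=\{s\}$, $T_2=\{t\}$ shows the disjoint-closures condition forces injectivity of $\epsilon_\A$ on $T$, and taking $T_1=\{s\}$, $T_2=T\setminus\{s\}$ shows each point of $\epsilon_\A(T)$ is isolated, so once more $\overline T^\A$ is a compactification of $T_d$; the disjoint-closures condition is then exactly the classical characterisation giving $\overline T^\A\cong\beta T_d$, whence $T$ is an interpolation set by (i). Conversely, if $T$ is an interpolation set then $\overline T^\A\cong\beta T_d$ by (i), inside which disjoint subsets of a discrete set always have disjoint closures; since $\overline T^\A$ is closed in $G^\A$, these closures coincide with the closures taken in $G^\A$.

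Statement (iii) is the one requiring genuine care: I would extend $f$ to a continuous $\widehat f$ on $\overline T^\A$ with $\norm{\widehat f}_{\overline T^\A}=\norm f_T$ (automatic for the Stone--\v Cech extension, since $T$ is dense), then extend across $G^\A$ by a norm-preserving complex Tietze argument --- apply Tietze to the real and imaginary parts and post-compose with the radial retraction of $\C$ onto the closed disc of radius $\norm f_T$, which leaves unchanged the values already lying in that disc. The Gelfand isometry then gives $\norm{\overline f}_\infty=\norm f_T$. Finally (iv) follows by combining (iii) with approximability: for the given $U$ pick $V_1,V_2$ and, for $T_1=T$, a $g\in\A(G)$ with $g(V_1T)=\{1\}$ and $g(G\setminus V_2T)=\{0\}$; replacing $g$ by $\max(0,\min(1,\mathrm{Re}\,g))$ --- legitimate since a $C^\ast$-subalgebra is closed under these operations on self-adjoint elements --- I may assume $0\le g\le 1$. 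Then $f=g\,\overline f$, with $\overline f$ the extension of $h$ from (iii), works: $g\equiv 1$ on $T\subseteq V_1T$ gives $\restr{f}{T}=h$; $V_2T\subseteq UT$ gives $f\equiv 0$ off $UT$; and $0\le g\le 1$ gives $\norm f_\infty=\norm h_T$. The main obstacle is thus not conceptual but one of bookkeeping: ensuring that $\overline T^\A$ really is a compactification of the \emph{discrete} $T$ before invoking the $\beta T_d$ characterisations, and carrying the supremum norm faithfully through the complex Tietze extension in (iii).
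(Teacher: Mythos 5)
Your proposal is correct and follows essentially the same route as the paper's proof: identifying $\overline{T}^{\A}$ with $\beta T_d$ via the universal property and the disjoint-closures characterisation of the Stone--\v{C}ech compactification, extending through Tietze into the closed disc of radius $\|f\|_T$ for (iii), and multiplying the norm-preserving extension by the bump function from approximability for (iv). If anything, your version is slightly more careful than the paper's at two points: you justify explicitly that $\epsilon_\A(T)$ is discrete in $G^\A$ (via extending $1_{\{s\}}$), and your truncation $\max(0,\min(1,\operatorname{Re} g))$ repairs the paper's slightly imprecise ``take the minimum of $f_2$ and the constant $1$,'' which alone would not control negative values.
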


\begin{proof}
First observe that   $\epsilon_\A$ is injective on every  $\A(G)$-interpolation set  $T$: if $t_1\neq t_2\in T$, there is $f\in \ell_\infty(T)$ with $f(t_1)\neq f(t_2)$. Take $\bar{f}\in \A(G)$ extending $f$.
By \eqref{eq:com} $\bar{f}=\bar{f}^\A\circ \epsilon_\A$, hence $\epsilon_A(t_1)\neq \epsilon_\A(t_2)$.

Assertion (i) follows then from the universal property defining the Stone-\v Cech compactification
of a discrete space.
In fact, the restriction of the evaluation map $\epsilon_{\A}$ to $T$
gives a homeomorphism of the discrete set $ T_d$ onto its image in $G^\A.$  So $\overline{T}^{\A}$ is a (topological) compactification of $T_d$,
and  we may apply \cite[Corollary 3.6.3]{enge77}.

Assertion (ii) follows also directly from a well-known characterization of the Stone-\v{C}ech compactification
of a discrete space, see for instance
\cite[Corollary 3.6.2]{enge77}.

To prove (iii), let $f\colon T\to \C$ with $\|f\|_{T}=M$ be given. If
$B_M$ is the closed disc of radius $M$ centered at 0 (in $\C$), we
can use (i) and the universal property of $\beta  T_d$ to find a
continuous function $f^\beta\colon \overline{T}^\A\to B_M$ with
$\restr{f^\beta}{T}=f$. Then, by Tietze's extension theorem,  $f^\beta$ can be
extended to a continuous function $f^\A\colon G^\A\to B_M$, the
restriction $\restr{f^\A}{G}$ is then the desired extension.

To prove  (iv), let
$T$ be  an approximable $\A(G)$-interpolation  set. First, we find, using (iii),
$f_1\in \A$ with $\restr{f_1}{T}=h$ and
$\|f_1\|_\infty=\|h\|_T$. The definition of approximable
$\A(G)$-interpolation sets provides two  neighbourhoods $V_1, V_2$ with
$\overline{V_1}\subseteq V_2\subseteq U$ and $f_2\in \A$ such
that \[f_2(V_1T)=\{1\}\quad\text{ and }\quad f_2(G\setminus V_2T)=\{0\}.\]
Using \cite[3.2.20]{enge77}, we
can assume (taking the minimum of $f_2$ and the function that is
constant and equal to 1) that $\|f_2\|_\infty=1$. The product
$f_1\cdot f_2$ then coincides  with $h$ on $T$ and vanishes off
$V_2T$.
\end{proof}

\begin{remark} Note that if in the lemma above  $\A(G)\subseteq \CB(G)$, then $T$ is necessarily discrete since
\emph{every} bounded function on $T$ must be continuous.

Observe as well that the sole existence of an infinite $\A(G)$-interpolation set $T$ in $G$, implies that $G^\A$ contains a copy of $\beta T_d$, where $T_d$ is the discrete set $T$. The
compactification $G^\A$  is therefore  large and  topologically involved.
\end{remark}

The following theorem, due to Chou \cite{chou82},  has its roots in a
result of Ramirez (see Theorem 2.3 of \cite{dunklrami}) in the
Abelian setting. This theorem is  used by Chou, loc. cit., to find
an isometric copy of $\ell_\infty$ inside $\wap(G)/\B(G)$ for a
discrete group $G.$ This was originally
 the departing point of our paper.

\begin{theorem}(Chou, \cite[Lemma 3.11]{chou82})
  \label{choufs}
  Let $G$ be a discrete group. A subset
 $T\subseteq G$ fails to be a $B(G)$-interpolation
 set if and only if there is a bounded function $f\in \ell_\infty(G)$,
with  $\|f\|_\infty= 1$  such that \[f(G\setminus T)=\{0\}\quad\text{ and}\quad
 \|\phi-f\|_T\geq 1\quad\text{ for all}\quad \phi\in B(G).\]
\end{theorem}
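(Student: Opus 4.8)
The plan is to recast the whole statement as a description of the failure of surjectivity of the restriction operator, and then to resolve the one genuinely delicate point — obtaining the constant exactly $1$ rather than $1-\varepsilon$ — by a disjointification (gliding hump) argument. Write $R\colon B(G)\to\ell_\infty(T)$ for the restriction map $R\phi=\restr{\phi}{T}$. By the definition of an interpolation set, $T$ is a $B(G)$-interpolation set precisely when $R$ maps onto $\ell_\infty(T)$, so the theorem is simply a concrete picture of what a failure of this surjectivity looks like.

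The implication ($\Leftarrow$) is immediate: if $f$ satisfies $\|\phi-f\|_T\ge 1$ for every $\phi\in B(G)$, then no $\phi\in B(G)$ can agree with $f$ on $T$ (that would force $\|\phi-f\|_T=0$); hence the bounded function $\restr{f}{T}\colon T\to\C$ admits no extension lying in $B(G)$, so $T$ fails to be a $B(G)$-interpolation set. Note this uses only the distance hypothesis, not $\|f\|_\infty=1$ or the support condition.

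For the converse I would use the standard identification of $B(G)$ as the dual of the group $C^\ast$-algebra, $B(G)=C^\ast(G)^\ast$, under which $\langle t,\phi\rangle=\phi(t)$ for $t\in G$ (this is the ``coefficients of unitary representations'' description recalled in the introduction). Let $S\colon\ell_1(T)\to C^\ast(G)$ be the contraction determined by $S\delta_t=t$; then its adjoint is exactly the restriction map, $S^\ast=R$. The classical duality fact that an adjoint is surjective iff its preadjoint is bounded below turns ``$T$ is not a $B(G)$-interpolation set'' into $\inf\{\|Sx\|_{C^\ast(G)}:\|x\|_1=1\}=0$; approximating by finitely supported elements yields finitely supported $x_n$ with $\|x_n\|_1=1$ and $\|Sx_n\|_{C^\ast(G)}\to 0$. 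Observe also that $S$ is injective, since the left regular representation recovers the coefficients, $\langle\lambda(Sx)\delta_e,\delta_t\rangle=x(t)$.

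The hard part — and the reason a naive Hahn--Banach argument gives only $1-\varepsilon$ — is that a norming functional on $\ell_\infty(T)$ need not attain its norm. I would avoid this by arranging the approximate null sequence $(x_n)$ to have \emph{pairwise disjoint} supports, so that a single fixed sign function norms all of them simultaneously. The key lemma is: for every finite $F\subseteq T$, $\inf\{\|Sx\|_{C^\ast(G)}:\|x\|_1=1,\ \supp x\subseteq T\setminus F\}=0$. I expect this to be the main obstacle. It is proved by contradiction: if the infimum off $F$ were $\delta>0$, then $S$ is bounded below on $\ell_1(T\setminus F)$, while injectivity makes it bounded below on the finite-dimensional $\ell_1(F)$; splitting $x_n=a_nu_n+b_nv_n$ along $F$ and $T\setminus F$ and using compactness of the unit sphere of $\ell_1(F)$ together with the closed range coming from boundedness below forces $b_nv_n$ to converge to some $v^\ast$ with $S(v^\ast+\alpha u)=0$ for a nonzero $v^\ast+\alpha u$, contradicting injectivity of $S$. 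Granting this, I build $(x_n)$ inductively with $\supp x_n$ disjoint from $\bigcup_{j<n}\supp x_j$ and $\|Sx_n\|<1/n$, and define $f$ on $G$ by $f(t)=\overline{x_n(t)}/|x_n(t)|$ for $t\in\supp x_n$ and $f=0$ elsewhere. Then $\|f\|_\infty=1$, $f(G\setminus T)=\{0\}$, and for every $\phi\in B(G)$ and every $n$ one has $\|\phi-f\|_T\ge|\langle x_n,f-R\phi\rangle|=|1-\langle Sx_n,\phi\rangle|\to 1$, so $\|\phi-f\|_T\ge 1$, as required.
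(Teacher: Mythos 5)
Your proposal is correct, and since the paper states Theorem \ref{choufs} without proof (quoting it from \cite{chou82}), the right comparison is with Chou's original argument, whose abelian antecedent is Ramirez's: your route --- identifying $B(G)=C^*(G)^*$ isometrically so that restriction to $T$ is the adjoint of the contraction $S\colon \ell_1(T)\to C^*(G)$, translating failure of interpolation into $\inf\{\|Sx\|_{C^*(G)}\colon \|x\|_1=1\}=0$ via the closed-range duality, and then disjointifying supports through your finite-deletion lemma so that a single sign function norms all the $x_n$ simultaneously and yields distance exactly $1$ rather than $1-\varepsilon$ --- is essentially that same classical duality-plus-gliding-hump proof. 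The delicate points all check out: $S^*$ really is the restriction map, injectivity of $S$ via the left regular representation is right, and the compactness/closed-range contradiction proving the deletion lemma is sound (write $x_n=u_n+v_n$ along $\ell_1(F)\oplus \ell_1(T\setminus F)$, pass to a convergent subsequence of $(u_n)$, use boundedness below off $F$ to get $v_n\to v$, and note $\|u+v\|_1=\lim(\|u_n\|_1+\|v_n\|_1)=1$ while $S(u+v)=0$).
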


\begin{remark}\label{b(g)=sidon}
  It is an immediate consequence of the previous theorem that
$\B(G)$-interpolation sets  are also $B(G)$-interpolation sets (i.e.,
Sidon sets). We do not know whether Theorem \ref{choufs} remains valid for all locally compact groups.
\end{remark}

The result in Theorem  \ref{choufs}  is  more natural when the function algebra is a
 $C^\ast$-subalgebra. It is not  surprising therefore that it holds for
 \emph{any}  $C^\ast$-subalgebra. Next lemma proves even more.

 \begin{lemma}\label{lem:nointer}  Let $G$  be
a topological group,  $\A_1(G)\subseteq \A_2(G)\subseteq
\ell_\infty(G)$ be two
$C^\ast$-subalgebras with $1\in\A_1(G)$, and let $(T_\eta )_{\eta<\kappa}$ be a family of disjoint  subsets of $G$  such that
\begin{enumerate}
\item each $T_\eta$ fails to be an $\A_1(G)$-interpolation set,
 \item $T=\bigcup_{\eta<\kappa}T_\eta $ is an approximable  $\A_2(G)$-interpolation set.
 \end{enumerate}
 Then for each  open neighbourhood
$U$ of $e$,
 there is  a function $f\in \A_2(G)$ with
$\|f\|_\infty=1$   such that \[ f(G\setminus
UT)=\{0\}\quad\text{ and}\quad \| f-\phi\|_{T_\eta}\geq 1\;\text{ for every $\eta<\kappa$ and every}\; \phi\in
\A_1(G).\]
\end{lemma}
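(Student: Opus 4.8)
\emph{Plan.} The key observation is that only the restriction of the sought function to $T$ matters: if $h\colon T\to\C$ is any bounded function and $f\in\A_2(G)$ satisfies $\restr{f}{T}=h$, then for each $\eta$ and each $\phi\in\A_1(G)$ we have $\|f-\phi\|_{T_\eta}=\|h-\phi\|_{T_\eta}$, since $T_\eta\subseteq T$. So the plan is to first build a single ``master'' function $h\colon T\to\C$ with $\|h\|_T=1$ such that $\|h-\phi\|_{T_\eta}\ge 1$ for every $\eta<\kappa$ and every $\phi\in\A_1(G)$, and then to realize it inside $\A_2(G)$. For the latter I would use that $T$ is an approximable $\A_2(G)$-interpolation set: applying Lemma~\ref{lem:isdisc}(iv) to $h$ and the given neighbourhood $U$ produces $f\in\A_2(G)$ with $\restr{f}{T}=h$, $f(G\setminus UT)=\{0\}$ and $\|f\|_\infty=\|h\|_T=1$, which is exactly what is required.

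It remains to construct $h$ piece by piece over the disjoint family $(T_\eta)$. Fix $\eta$. Since $T_\eta$ fails to be an $\A_1(G)$-interpolation set, Lemma~\ref{lem:isdisc}(ii) furnishes two \emph{disjoint} subsets $S_1,S_2\subseteq T_\eta$ whose $\A_1$-closures meet, say $p\in\overline{S_1}^{\A_1}\cap\overline{S_2}^{\A_1}$. (This single equivalence also absorbs the possibility that $\epsilon_{\A_1}$ is not injective on $T_\eta$, which is the degenerate case $S_1=\{t_1\}$, $S_2=\{t_2\}$.) On $T_\eta$ I set $h$ equal to $1$ on $S_1$, equal to $-1$ on $S_2$, and equal to $0$ elsewhere; doing this for every $\eta$ defines $h$ on all of $T$ (the $T_\eta$ being pairwise disjoint), and $\|h\|_T=1$.

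The heart of the matter is the distance estimate, and it hinges on $p$ being a \emph{single} point lying in \emph{both} closures. Fix $\phi\in\A_1(G)$ and put $c=p(\phi)$. Because $\overline{S_1}^{\A_1}$ is the closure of $\epsilon_{\A_1}(S_1)$ in the topology of pointwise convergence on $G^{\A_1}$, there is a net $(s_\alpha)$ in $S_1$ with $\phi(s_\alpha)=\epsilon_{\A_1}(s_\alpha)(\phi)\to p(\phi)=c$, whence $\sup_{S_1}|1-\phi|\ge|1-c|$; symmetrically a net in $S_2$ gives $\sup_{S_2}|{-1}-\phi|\ge|1+c|$. As $|1-c|+|1+c|\ge|(1-c)+(1+c)|=2$, at least one of the two suprema is $\ge 1$, so $\|h-\phi\|_{T_\eta}\ge\max(|1-c|,|1+c|)\ge 1$. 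Since this holds for every $\phi$ and every $\eta$, the master function $h$ has the required property and the proof closes by the realization step of the first paragraph.

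The one place where the hypotheses are used essentially (rather than formally) is the equivalence ``$T_\eta$ is not an $\A_1(G)$-interpolation set $\iff$ there are disjoint subsets with meeting $\A_1$-closures'' provided by Lemma~\ref{lem:isdisc}(ii): it is precisely the \emph{common} accumulation point $p$ that forces the \emph{same} limiting value $c$ on $S_1$ and on $S_2$, which is what drives $\max(|1-c|,|1+c|)\ge1$. I expect this to be the only delicate step; the gluing, the norm bookkeeping, and the passage from $h$ to $f$ are routine once Lemma~\ref{lem:isdisc}(ii) and (iv) are in hand.
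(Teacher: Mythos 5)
Your proof is correct and takes essentially the same route as the paper's: both extract disjoint subsets of each $T_\eta$ with intersecting $\A_1$-closures via Lemma \ref{lem:isdisc}(ii), glue the resulting $\pm 1$-valued function across the disjoint family, and realize it in $\A_2(G)$ supported near $T$ via Lemma \ref{lem:isdisc}(iv). Your estimate using the limit value $c=\phi^{\A_1}(p)$ together with $|1-c|+|1+c|\ge 2$ is just a slightly cleaner packaging of the paper's $\varepsilon$-triangle-inequality argument at points $t_{1,\eta},t_{2,\eta}$ chosen close to the common cluster point $p_\eta$.
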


\begin{proof}
 Let  $T=\bigcup_{\eta<\kappa}T_\eta $ be an approximable  $\A_2(G)$-interpolation set as stated in the lemma. Let $U$  be an open neighbourhood of $e$.

To avoid cumbersomeness, we abuse our notation and use the same letters to denote subsets of $T$ and their images in $G^{\A_1}$.

Then, by  Statement (ii) of Lemma \ref{lem:isdisc},   each $T_\eta$ must  contain
two disjoint subsets $T_{1,\eta},T_{2,\eta}$ such that
 $\overline{T_{1,
 \eta}}^{\A_1}\cap \overline{T_{2,\eta}}^{\A_1}\neq \emptyset$.
  Define  for each $\eta<\kappa,$ a function $h_\eta\colon G\to [-1,1]$ supported on $T_\eta $ with \[h_\eta(T_{1,\eta})=\{1\}\quad \text{and} \quad h_\eta(T_{2,\eta})=\{-1\}.\]
Then consider the function  $h\colon G\to [-1,1]$ supported on $T$ and given by \[h(t)=h_\eta(t)\;\text{ if}\; t\in T_\eta\;\text{for some}\;\eta<\kappa.\]
By Statement (iv) of Lemma \ref{lem:isdisc}, there is a
 a function $f\in \A_2(G)$
 such that \[f(G\setminus  UT)=0,\;\;\restr{f}{T
 }=h\;\text{ and}\;\|f\|_\infty=\|h\|_T=1.\]
 Let now $\phi$ be any function in $\A_1(G),$ and take  $\varepsilon>0$. Given $\eta<\kappa$, we are going to prove that $\|f-\phi\|_{T_\eta}\geq 1-\varepsilon$.

  Take $p_\eta\in \overline{T_{1,\eta}}^{\A_1}\cap\overline{T_{2,\eta}}^{\A_1}$ and
pick  $t_{1,\eta}\in T_{1,\eta}$ and $t_{2,\eta}\in T_{2,\eta}$ with \[|\phi(t_{1,\eta})-\phi^{\A_1}(p_\eta)|<\varepsilon\quad\text{ and}\quad |\phi(t_{2,\eta})-\phi^{\A_1}(p_\eta)|<\varepsilon,\] where $\phi^{\A_1}$ denotes the extension of $\phi$ to $G^{\A_1}$.
 Then
\begin{align}\label{1}
  2&=|h_\eta(t_{1,\eta})-h_\eta(t_{2,\eta})| = |h(t_{1,\eta})-h(t_{2,\eta})|=|f(t_{1,\eta})-f(t_{2,\eta})|
   \\ \nonumber &\leq |f(t_{1,\eta})-\phi(t_{1,\eta})|
   \\ \nonumber &+|\phi(t_{1,\eta})-\phi^{\A_1}(p_\eta)|
  +|\phi^{\A_1}(p_\eta)-\phi(t_{2,\eta})|
  +|\phi(t_{2,\eta})-f(t_{2,\eta})|.
\end{align}
It follows that either $|f(t_{1,\eta})-\phi(t_{1,\eta})|\geq 1-\varepsilon$ or
$|f(t_{2,\eta})-\phi(t_{2,\eta})|\geq 1-\varepsilon$. Since $\varepsilon>0$ was arbitrary, we find that $\|f-\phi\|_{T_\eta}\geq 1$. Since  $\|f\|_\infty =1$ and $f(G\setminus UT)=\{0\}$, we see that $f$ is the required function.
\end{proof}

For the main theorem in this section, we need to recall the following definitions.
These sets are also essential for  the rest of the paper.

\begin{definition}
   Let $G$ be a topological group,  $T$ be a  subset of $G$ and $U$ be a neighbourhood of $e$.
We say that $T$ is \emph{right $U$-uniformly
discrete}  if \[Us\cap Us^\prime=\emptyset\quad\text{ for every}\quad s\neq
s^\prime\in T.\]
 The set $T$ being \emph{left $U$-uniformly discrete} is defined  analogously.
We say that $T$ is \emph{right uniformly discrete} (resp. \emph{left uniformly discrete}) when it is right $U$-uniformly discrete (resp. left $U$-uniformly discrete) for some neighbourhood $U$ of $e.$
If $T$ is both left and right uniformly  discrete, we say that $T$
is uniformly discrete.
 \end{definition}

\begin{lemma}
  \label{lem:unic}
  Let $G$ be  a locally compact group, $\A(G)$ be a $C^\ast$-subalgebra of $\CB(G)$,  $U$ be a compact neighbourhood of $e$, and  $T\subseteq G$
 be an approximable $\A(G)$-interpolation set that  can be  partitioned as
  $T=\bigcup_{\eta <\kappa}T_\eta$
   with $UT_\eta\cap UT_{\eta^\prime}=\emptyset$ whenever $\eta \neq \eta^\prime$.  Then there is a compact neighbourhood $V$ of the identity with $V^3\subseteq U$ such that given any two
   functions $f,g\in \ell_\infty(G)$ supported in $VT$ and a function $\mathbf{c}=(c_\eta)_{\eta<\kappa}\in  \ell_\infty(\kappa)$  such that
\begin{equation}
  \label{eq:cont}
\restr{f}{VT_\eta}=c(\eta) \restr{g}{VT_\eta}\quad\text{ for each}\quad \eta<\kappa,\end{equation}
 one has that:
\begin{enumerate}
  \item If $g$  is continuous, then so is $f$.
  \item If $T$ is right $U$-uniformly discrete, $\A(G)$ is an admissible $C^\ast$-subalgebra of $\luc(G)$,  then   $g\in \A(G),$ implies $f\in \A(G)$.
\end{enumerate}
\end{lemma}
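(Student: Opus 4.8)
The plan is to pin down a single neighbourhood $V$ that both separates the fattened blocks $VT_\eta$ and lets the pattern $\mathbf c$ be realised inside $\A(G)$. First I would choose a symmetric compact neighbourhood $W$ of $e$ with $W^3\subseteq U$, apply the approximability of $T$ (Definition \ref{approx}) to the neighbourhood $W$ to get open $V_1,V_2$ with $\overline{V_1}\subseteq V_2\subseteq W$, and then take a symmetric compact neighbourhood $V\subseteq V_1$. Then $V\subseteq V_1\subseteq W$ yields $V^3\subseteq W^3\subseteq U$ and $V_2\subseteq W\subseteq U$ at no extra cost. The one geometric fact I would extract is that the blocks are well separated: since $V^2\subseteq U$ and $UT_\eta\cap UT_{\eta'}=\emptyset$ for $\eta\neq\eta'$, no point can lie in $V^2T_\eta\cap V^2T_{\eta'}$, and consequently every $x\in G$ has an open neighbourhood $(\mathrm{int}\,V)\,x$ meeting at most one of the sets $VT_\eta$ (if it met two, symmetry of $V$ would force $x\in V^2T_\eta\cap V^2T_{\eta'}$).

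For (i) I would work locally at an arbitrary $x$ with $N:=(\mathrm{int}\,V)\,x$. If $N$ meets no block then $N\cap VT=\emptyset$ and $f\equiv0$ on $N$. If $N$ meets exactly one block $VT_\eta$, then $f=c_\eta g$ on $N\cap VT_\eta$ by hypothesis, while on $N\setminus VT_\eta=N\setminus VT$ both $f$ and $g$ vanish, so $f=c_\eta g$ there as well; hence $f=c_\eta g$ on all of the open set $N$. In either case $f$ agrees near $x$ with a continuous function, so $f$ is continuous at $x$.

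For (ii) the idea is to write $f=\psi g$ with $\psi\in\A(G)$ constant on each fattened block, and then to approximate. Since finitely valued functions are norm-dense in $\ell_\infty(\kappa)$, I would approximate $\mathbf c$ uniformly by $\mathbf c^{(n)}=\sum_{j=1}^{m_n}a_j1_{E_j}$ for finite partitions $(E_j)_j$ of $\kappa$, set $T_j=\bigcup_{\eta\in E_j}T_\eta\subseteq T$, and apply approximability (with the $V_1,V_2$ already fixed) to obtain $h_j\in\A(G)$ with $h_j\equiv1$ on $V_1T_j$ and $h_j\equiv0$ off $V_2T_j$. As $V\subseteq V_1$, $h_j\equiv1$ on $VT_j$; and since $V,V_2\subseteq U$ and the $UT_\eta$ are disjoint, $VT_{j'}\cap V_2T_j=\emptyset$ for $j'\neq j$, so $h_j\equiv0$ on every other block. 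Thus $\psi^{(n)}:=\sum_j a_jh_j\in\A(G)$ equals $c^{(n)}_\eta$ throughout $VT_\eta$. Then $\psi^{(n)}g\in\A(G)$, it equals $c^{(n)}_\eta g$ on each $VT_\eta$ and vanishes off $VT$ (where $g=0$), whence $\|\psi^{(n)}g-f\|_\infty\le\|\mathbf c^{(n)}-\mathbf c\|_\infty\|g\|_\infty\to0$; since $\A(G)$ is norm-closed, $f\in\A(G)$.

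The step I expect to be the crux is securing \emph{exact} constancy of the multiplier on the whole neighbourhood $VT_\eta$, not merely at the centres $T_\eta$: interpolating the values $c_\eta$ on $T$ through Lemma \ref{lem:isdisc}(iv) would only fix the value at points of $T$. This is exactly what \emph{approximability} provides, through the clause that $h\equiv1$ on the inner tube $V_1T_1$, and it is to exploit it that $V$ must sit inside $V_1$ while the blocks are kept apart at the coarser scale $V_2\subseteq U$. I note that this route uses only that $\A(G)$ is a uniformly closed algebra carrying the required interpolating and bump functions; the admissibility, the inclusion $\A(G)\subseteq\luc(G)$ and the right $U$-uniform discreteness would instead be the natural ingredients for an alternative construction of $f$ from the semigroup action $x\mapsto xf$ on $G^\A$, and they secure the ambient regularity in which the lemma is later applied.
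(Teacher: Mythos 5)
Your proof is correct, and part (ii) follows a genuinely different route from the paper. For (i) your argument is essentially the paper's idea in a cleaner local form: the paper splits into interior and non-interior points of $VT$ and argues with nets, while you observe once and for all that every point has a neighbourhood $(\mathrm{int}\,V)x$ meeting at most one tube $VT_\eta$, on which $f$ coincides globally with $c_\eta g$ (or with $0$), so continuity is inherited outright; the separation computation via $V^2\subseteq U$ and symmetry of $V$ is exactly right. For (ii) the divergence is substantial. The paper extends $f$ to the compactification $G^\A$: it interpolates $\varphi(t)=c_\eta$ on $T$, sets $f^\ast(vp)=\varphi^\A(p)g^\A(vp)$ for $v\in V$, $p\in \overline{T}^\A$, and must then prove well-definedness via a Veech-type unique-decomposition property (this is where right $U$-uniform discreteness and approximability enter) and continuity via the joint continuity property (this is where admissibility and $\A(G)\subseteq \luc(G)$ enter), concluding $f\in\A(G)$ by Gelfand duality. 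You instead stay entirely inside the $C^\ast$-algebra: the bump functions from approximability give multipliers $\psi^{(n)}\in\A(G)$ that are \emph{exactly} constant on each tube $VT_\eta$ (your verification that $VT_{j'}\cap V_2T_j=\emptyset$ for $j'\neq j$ is the key point and is correct, since $V,V_2\subseteq U$ and the sets $UT_\eta$ are pairwise disjoint), and then norm-density of finitely valued functions in $\ell_\infty(\kappa)$ plus norm-closedness of $\A(G)$ finish the job. Your route is shorter and strictly more general: it never uses uniform discreteness, admissibility, or $\A(G)\subseteq\luc(G)$, so it proves (ii) for an arbitrary $C^\ast$-subalgebra and in particular subsumes the case (i) handled separately in Corollary \ref{cor:main:constr} (where only $\A_2(G)=\CB(G)$ was allowed without discreteness). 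What the paper's heavier argument buys, and yours does not, is the by-product singled out in the Remarks after the lemma: the unique-factorization statement \eqref{Veech}, i.e.\ a Veech-type freeness of the $G$-action at points of $V\overline{T}^\A$, which the authors advertise for later use; your multiplier argument yields no information about $G^\A$. You also correctly identify the crux --- constancy on the whole tube rather than at the centres $T_\eta$ --- which the paper resolves instead by evaluating $\varphi^\A$ at the $\overline{T}^\A$-coordinate of each point of $V\overline{T}^\A$.
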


\begin{proof}
First, consider the  two neighbourhoods  $V_1$ and $V_2$ provided by the definition of approximable $\A(G)$-interpolation sets for the neighbourhood $U$. We take $\overline{V_1}$ as $V$, and we can obviously assume that  $V^2\subseteq U$.

That $f$ is well defined follows from the relation $UT_\eta\cap UT_{\eta^\prime}=\emptyset$.

Let $g\in \CB(G)$ and $f\in \ell_\infty(G)$ be functions with
 $f(G\setminus VT)=g(G\setminus VT)=\{0\}$,      related as in \eqref{eq:cont}. We prove that $f$ is continuous considering separately continuity at interior points of $VT$ and points that do not belong to the interior of $VT$.

Let  $s\in G$ that is not an interior point of $VT$. Since $s$ can be approached from $G\setminus VT$, we see by continuity that $g(s)=0$.
By checking the cases when $s\in VT$ and when $s\notin VT$, we deduce that $f(s)=0$ as well.
Now let $(x_\alpha)$ be  a net in $G$ converging to $s$.
 We can assume that either $(x_\alpha)\subset VT$ or $(x_\alpha)\subset G\setminus VT$.
  In the former  case,  we use the fact that $|f(x_\alpha)|\leq \|\mathbf{c}\|_\infty\cdot |g(x_\alpha)|$ and conclude  that $\lim_\alpha f(x_\alpha)=0=f(s)$.
 In the other case when ($(x_\alpha)\subset G\setminus VT$), it is clear that $\lim_\alpha f(x_\alpha)=0=f(s)$. The continuity of $f$ at $s$ follows.

Suppose now that $s$ is an interior point of $VT$.  Pick $\eta<\kappa$ such that $s=vt$ with $v\in V$ and $t\in T_\eta$, and let $W$ be a neighbourhood of the identity with $Ws\subseteq VT$ with
$|g(s)-g(s')|<\epsilon$ for every $s^\prime\in Ws.$ Let $w\in W$ be such that $s'=wvt$  and notice that
 $s^\prime=ws=wvt=v_0t_0$ for some $v_0\in V$ and $t_0\in T$ implies that $t_0\in T_\eta$.
Therefore, \[|f(s)-f(s')|=|c(\eta)g(s)-c(\eta)g(s')|\quad\text{ for every}\quad s'\in Ws,\] and so the
continuity of $f$ at interior points follows as well.

We now assume $\A\subseteq \luc(G)$. Define a function $\varphi$ on $T$ by $\varphi(t)=c(\eta)$ for every $t\in T_\eta$. Since $T$ is an $\A(G)$-interpolation set, we may extend
$\varphi$ to a function $\overline{\varphi}\in \A(G).$ By Lemma \ref{lem:isdisc} (iv), we can assume that $\overline{\varphi}(G\setminus VT)=\{0\}$.
If $g^\A$ and $\varphi^\A$ denote the respective extensions of
$g$ and $\overline{\varphi}$ to $G^\A$, we  define $f^\ast\colon G^\A\to\C$ by
\begin{align}
  f^\ast(vp)&=  \varphi^\A (p)\cdot   g^\A(vp) \mbox{ if $v\in V$ and $p\in \overline{T}^\A$}\\ \nonumber
  f^\ast(x)&= 0 \mbox{ if $x\notin V\overline{T}^\A$.}
\end{align}
We check that $f^\ast$ is a well-defined, continuous extension of $f$  to $G^\A$.

\emph{(1) $f^\ast$ is well defined.} It might happen that some $vp\in V\overline{T}^\A$ admits two different decompositions. We  check that
the definition of $f^\ast$ does not depend of the choice of the decomposition. Suppose therefore that $v_1p_1=v_2p_2$ with $v_1,v_2\in V$ and $p_1,p_2\in \overline{T}^\A$.

If $p_1\neq p_2$,
we may  choose  $T_1, T_2\subset T$  such that
\[\overline{T_1}^\A\cap \overline{T_2}^\A=\emptyset,\; p_1\in \overline{T_1}^\A\quad\text{ and}\quad p_2 \in \overline{T_2}^\A\]
(this is possible by (ii) of Lemma \ref{lem:isdisc}).
Since  $T$ is approximable, we  may pick   $h \in \A(G)$ such that \[h(VT_1)=\{1\}\quad\text{ and}\quad h(G\setminus V_2T_1)=\{0\}.\]

 Recalling  that multiplication by elements of $G$ is  continuous on $G^\A$, it is clear that
 $h^\A(v_1p_1)=1$.
By the same reason, if $(t_\alpha)$ is a net in $T_2$ converging to $p_2$,
we have that $v_2p_2=\lim_\alpha v_2 t_\alpha$. But since  $T$ is right $U$-uniformly discrete,   no element $v_2t_\alpha$ can be in $V_2T_1$, hence $h^\A(v_2p_2)$ must be zero.  This contradiction shows that \begin{equation}\label{Veech}v_1p_1=v_2p_2\;\text{ implies}\;p_2=p_1.\end{equation}
This shows already that $f^\ast$ is well defined, since the equalities $v_1p_1=v_2p_2$ and $p_1=p_2$ give us \[f^\ast(v_1p_1)=\varphi^\A(p_1)g^\A(v_1p_1)=f^\ast(v_2p_2).\]
(In fact, $v_1$ and $v$ must be also equal by the same argument, but this is enough for our purposes.)
\medskip

\emph{(2)
$f^\ast$ coincides with $f$ on $G$.}  Since  $V\overline{T}^\A\cap G=V(\overline{T}^\A\cap G)=VT$ and $f(G\setminus VT)=\{0\}$,  we readily see that $f$ and $f^\ast$ coincide on $G\setminus VT$.
 Let on the other hand $s=vt$ with $v\in V$ and $t\in T_\eta$.
Then
\[  f^\ast(s)=\varphi(t)g(s) =c(\eta) g(s)=f(s).\]
\medskip

\emph{(3) $f^\ast$ is continuous.}
Using the joint continuity property, we see that $V\overline T^\A$ is closed in $G^\A.$ So the continuity of $f^\ast$
at the points outside of  $V\overline T^\A$ is clear.

  We divide the case $x=vp\in V\overline T^\A$ into two subcases. Suppose first that $x$ is an interior point $V\overline T^\A$, and let $(q_\alpha)$ be a net in $G^\luc$
  converging to $x$. Then $(q_\alpha)$ is eventually of the form $(v_\alpha p_\alpha)$ with $(v_\alpha)$ in $V$ and $(p_\alpha)$ in
  $\overline T^\A.$  By taking subnets if necessary, we may assume that $\lim_\alpha v_\alpha=v_0$ in $V$ and $\lim_\alpha p_\alpha=p_0$ in $\overline T^\A.$
  Accordingly, $x=vp=v_0p_0,$ and applyig \ref{Veech}, we see that $p=p_0.$ Therefore,
  \[f^\ast(q_\alpha)=\varphi^\A(p_\alpha)g^\A(q_\alpha)\longrightarrow \varphi^\A(p_0)g^\A(x)=\varphi^\A(p)g^\A(x)=f^\ast(x),\]
  as required. The second subcase is when $x$ is outside the interior of $V\overline T^\A.$ Here, we may assume that the net $(q_\alpha)$ given to converge to $x$
  is also outside $V\overline T^\A$, and so  $g^\A(q_\alpha)=0$ for every $\alpha$. Since $g^\A$ is continuous, we deduce that
  $f^\ast(x)=\varphi^\A(p)g^\A(x)=0,$ as required.

 \medskip

From (1), (2), (3) we conclude that $f\in \A$.
\end{proof}

\begin{remarks}
(i) A known theorem due to Veech asserts that the left action of a locally compact group $G$ on $G^\luc$ is free, i.e., $gx\ne x$  for every $x\in G^\luc$ and $g\in G,$ $g\ne  e$, see \cite{Ve}, or \cite{P} for a shorter proof.
   The proof of the previous Lemma reveals that Veech's property in fact holds in $G^{\A}$ at any point in the closure of the approximable $\A(G)$-interpolation sets with $\A\subset \luc(G)$. That is, if $T$ is any such a set, $x\in\overline{ T}^\A$  and $g\ne  e$ in $G$, then $gx\ne x$ and $xg\ne x$ in $G^\A.$ This property was proved in $G^\wap$ in \cite{BF} and \cite{F3} using $t$-sets. $t$-Sets are by \cite{FG}
approximable $\wap(G)$-interpolation sets. We will return to these matters in a forthcoming work.
\vskip0.3cm

(ii) It could also be worth  to mention that for metrizable locally compact groups the condition on $T$ in (ii) is redundant. Indeed, by  \cite[Theorem 4.9]{FG},
every $\luc(G)$-interpolation subset of a metrizable group is right uniformly discrete.
\end{remarks}



%
%

\begin{theorem}\label{main:constr}
 Let $G$  be a locally compact  group and let
$\A_1(G)\subset \A_2(G)\subseteq \luc(G)$ be two unital
$C^\ast$-subalgebras of $\ell_\infty(G)$ with $\A_2(G)$ admissible. Let, in addition, $U$ be a compact   neighbourhood of the identity such that $T$ is right $U$-uniformly discrete.  Suppose that  $G$ contains a
family of sets
   $\{T_\eta\colon \eta<\kappa\}$ such that
   \begin{enumerate}
     \item $T_\eta\cap T_{\eta^\prime}=\emptyset$ for every
   $\eta\neq \eta^\prime<\kappa$,
   \item $T_\eta$ fails to
be an $\A_1(G)$-interpolation set for every $\eta<\kappa$, and
\item $T=\bigcup_{\eta<\kappa}T_\eta$ is an approximable $\A_2(G)$-interpolation
set.
   \end{enumerate}
Then there is  a linear isometry  $\Psi
\colon \ell_\infty(\kappa)\to \A_2(G)/\A_1(G)$.
\end{theorem}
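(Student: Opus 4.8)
The plan is to manufacture, from a single \emph{seed} function, an entire isometric copy of $\ell_\infty(\kappa)$ by rescaling the seed independently on each block $VT_\eta$, while keeping everything inside $\A_2(G)$ by invoking Lemma \ref{lem:unic}. The map $\Psi$ will send $\mathbf c=(c_\eta)_{\eta<\kappa}$ to the coset of the function that agrees with $c_\eta$ times the seed on the $\eta$-th block.

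First I would verify the hypotheses of Lemma \ref{lem:unic}. Since the $T_\eta$ are pairwise disjoint and $T$ is right $U$-uniformly discrete, for $\eta\ne\eta'$ any $s\in T_\eta$ and $s'\in T_{\eta'}$ are distinct, so $Us\cap Us'=\emptyset$; taking unions yields $UT_\eta\cap UT_{\eta'}=\emptyset$. Thus Lemma \ref{lem:unic} applies and provides a compact neighbourhood $V$ of $e$ with $V^3\subseteq U$ across which the scaling relation \eqref{eq:cont} transfers membership in $\A_2(G)$. Next, for each $\eta$ assumption (ii) together with Lemma \ref{lem:isdisc}(ii) yields disjoint subsets $T_{1,\eta},T_{2,\eta}\subseteq T_\eta$ whose $\A_1$-closures meet, say at a point $p_\eta\in\overline{T_{1,\eta}}^{\A_1}\cap\overline{T_{2,\eta}}^{\A_1}$. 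Defining $h\colon T\to[-1,1]$ by $h\equiv 1$ on each $T_{1,\eta}$, $h\equiv -1$ on each $T_{2,\eta}$ and $h\equiv 0$ elsewhere, and applying Lemma \ref{lem:isdisc}(iv) with a neighbourhood contained in the interior of $V$, produces the seed $g\in\A_2(G)$ with $\restr{g}{T}=h$, $\|g\|_\infty=1$, and $g$ supported in $VT$.

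For each $\mathbf c\in\ell_\infty(\kappa)$ I would then define $f_{\mathbf c}\colon G\to\C$ by $f_{\mathbf c}(x)=c_\eta\,g(x)$ when $x\in VT_\eta$ and $f_{\mathbf c}(x)=0$ when $x\notin VT$. The disjointness $VT_\eta\cap VT_{\eta'}=\emptyset$ (a consequence of $UT_\eta\cap UT_{\eta'}=\emptyset$) makes $f_{\mathbf c}$ well defined, and $\restr{f_{\mathbf c}}{VT_\eta}=c_\eta\,\restr{g}{VT_\eta}$ is exactly relation \eqref{eq:cont}. This is where the main work sits: I would invoke Lemma \ref{lem:unic}(ii) — precisely here admissibility of $\A_2(G)$, the inclusion $\A_2(G)\subseteq\luc(G)$, and right $U$-uniform discreteness of $T$ are all used — to conclude $f_{\mathbf c}\in\A_2(G)$ from $g\in\A_2(G)$. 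Setting $\Psi(\mathbf c)=f_{\mathbf c}+\A_1(G)$ defines a map into $\A_2(G)/\A_1(G)$, and since $f_{\mathbf c+\lambda\mathbf d}=f_{\mathbf c}+\lambda f_{\mathbf d}$ pointwise, $\Psi$ is linear.

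It remains to prove $\Psi$ is isometric. For the upper bound, $|g|\le 1$ gives $\|f_{\mathbf c}\|_\infty\le\|\mathbf c\|_\infty$, hence $\|\Psi(\mathbf c)\|\le\|\mathbf c\|_\infty$. For the lower bound, fix $\phi\in\A_1(G)$ and $\varepsilon>0$; for each $\eta$ pick $t_{1,\eta}\in T_{1,\eta}$ and $t_{2,\eta}\in T_{2,\eta}$ with $|\phi(t_{i,\eta})-\phi^{\A_1}(p_\eta)|<\varepsilon$. Since $f_{\mathbf c}(t_{1,\eta})=c_\eta$ and $f_{\mathbf c}(t_{2,\eta})=-c_\eta$, the triangle-inequality estimate used in Lemma \ref{lem:nointer} gives $2|c_\eta|\le 2\|f_{\mathbf c}-\phi\|_{T_\eta}+2\varepsilon$, so $\|f_{\mathbf c}-\phi\|_{T_\eta}\ge|c_\eta|$ after letting $\varepsilon\to 0$. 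Taking the supremum over $\eta$ yields $\|f_{\mathbf c}-\phi\|_\infty\ge\|\mathbf c\|_\infty$, and as $\phi\in\A_1(G)$ was arbitrary, $\|\Psi(\mathbf c)\|\ge\|\mathbf c\|_\infty$. The two bounds give $\|\Psi(\mathbf c)\|=\|\mathbf c\|_\infty$, so $\Psi$ is the desired linear isometry. The only genuinely hard step is the membership $f_{\mathbf c}\in\A_2(G)$, which is exactly the content of Lemma \ref{lem:unic}; everything else is bookkeeping with the triangle inequality.
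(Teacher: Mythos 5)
Your proposal is correct and follows essentially the same route as the paper: the paper also builds the seed $f$ via Lemma \ref{lem:nointer} (whose proof you have simply inlined, using Lemma \ref{lem:isdisc}(ii) and (iv) to produce the $\pm 1$-valued function supported in $VT$), defines $f_{\mathbf c}$ by scaling on each block $VT_\eta$, invokes Lemma \ref{lem:unic}(ii) for membership in $\A_2(G)$, and obtains the lower isometry bound from the same two-point triangle-inequality estimate at $p_\eta$. The only cosmetic differences are that you verify $UT_\eta\cap UT_{\eta'}=\emptyset$ explicitly from right $U$-uniform discreteness (a hypothesis of Lemma \ref{lem:unic} the paper leaves implicit) and re-derive the estimate directly for $f_{\mathbf c}$ instead of factoring out $|c_{\eta_0}|$ as in Chou's argument.
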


\begin{proof}
Let $V$ be the  neighbourhood of the identity provided by Lemma \ref{lem:unic}.

Since $T=\bigcup_{\eta<\kappa}T_\eta$ is an approximable $\A_2(G)$-interpolation
set and each $T_\eta$ fails to be an $\A_1(G)$-interpolation set, we take from
Lemma \ref{lem:nointer}
a function $f\in \A_2(G)$ with
 $\|f\|_\infty=1$
such that
\[f(G\setminus VT)=\{0\}\quad\text{and}\quad
\|f-\phi\|_{T_\eta}\geq 1\quad\text{ for all}\quad \phi\in \A_1(G)\quad\text{and}\quad \eta<\kappa.\]
For each
$\mathbf{c}=(c_\eta)_{\eta<\kappa}\in\ell_\infty(\kappa)$, we define  the
function $f_{\mathbf{c}}:G\to\C$  supported in $VT$ by
\[f_{\mathbf{c}}(vt)=c_\eta f(vt)\quad\text{ if}\quad t\in T_\eta\quad\text{and}\quad \eta<\kappa,\]
i.e., with  the notation of Lemma \ref{lem:unic}, $\restr{f_{\mathbf{c}}}{VT_\eta}=\mathbf{c}(\eta) \restr{f}{VT_\eta}$.

Then  $f_{\mathbf{c}}\in \A_2(G)$ by (ii) of Lemma \ref{lem:unic}. Obviously, the map
$\Psi \colon \ell_\infty (\kappa) \to \A_2(G)/\A_1(G)$ given by
\[\Psi(\mathbf{c})=f_{\mathbf{c}}+\A_1(G)\quad\text{for every}\quad \mathbf{c}\in\ell_\infty(\kappa)\] is linear. We next check that it is isometric.

 The same argument of \cite[Theorem 3.12]{chou82} shows now  that, for every $\eta_0 < \kappa,$
\begin{align*}
\|\Psi\left((c_\eta)_{\eta<\kappa}\right)\|_{\A_2(G)/\A_1(G)}&= \inf
\{\|f_{\mathbf{c}}-\phi\|_\infty:\phi\in
\A_1(G)\}\\& \ge \inf \{\|f_{\mathbf{c}}-\phi\|_{T_{\eta_0}}:\phi\in \A_1(G)\}
\\&
=\inf \{\|c_{\eta_0}f-\phi\|_{T_{\eta_0}}:\phi\in \A_1(G)\}
\\&
= |c_{\eta_0}|\inf \{\|f-\phi\|_{T_{\eta_0}}:\phi\in \A_1(G)\}
\\&
\ge |c_{\eta_0}|,
\end{align*}
where the last inequality follows from the choice of $f$.
Since, obviously, \[\|\Psi(\mathbf{c})\|_{\A_2(G)/\A_1(G)}\leq
\|f_{\mathbf{c}}\|_\infty=\|\mathbf{c}\|,\quad\text
{for every}\quad \mathbf{c}=(c_\eta)_{\eta<\kappa}\in
\ell_\infty(\kappa),\] we see
 that $\Psi$ is the required isometry.
\end{proof}

\begin{corollary}\label{cor:main:constr}
If in the above theorem $\A_2(G)=\CB(G)$ and $T$ is not assumed to be  right $U$-uniformly discrete but still $UT_\eta\cap UT_{\eta^\prime}=\emptyset$, then the quotient
$\CB(G)/\A_1(G)$ contains a linearly  isometric copy of $\ell_\infty(\kappa)$.
\end{corollary}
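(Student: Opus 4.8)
The plan is to run the proof of Theorem \ref{main:constr} essentially unchanged, isolating the single place where its stronger hypotheses were actually consumed. Tracing that proof, the admissibility of $\A_2(G)$, the inclusion $\A_2(G)\subseteq\luc(G)$, and the right $U$-uniform discreteness of $T$ are invoked only once, namely to apply part (ii) of Lemma \ref{lem:unic} and thereby land $f_{\mathbf{c}}$ back inside $\A_2(G)$. When $\A_2(G)=\CB(G)$, however, membership in $\A_2(G)$ means nothing more than continuity together with boundedness, and continuity of $f_{\mathbf{c}}$ is delivered by part (i) of Lemma \ref{lem:unic}, which requires only the disjointness $UT_\eta\cap UT_{\eta'}=\emptyset$ and no hypothesis on $\A$ beyond $\A\subseteq\CB(G)$. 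So the whole point is to swap part (ii) for part (i).

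Concretely, I would first apply Lemma \ref{lem:unic} with $\A(G)=\CB(G)$ — legitimate since hypothesis (3) makes $T$ an approximable $\CB(G)$-interpolation set and $\CB(G)$ is trivially a $C^\ast$-subalgebra of itself — to obtain the compact neighbourhood $V$ with $V^3\subseteq U$. Since Lemma \ref{lem:nointer} places no restriction on $\A_2(G)$ beyond its being a $C^\ast$-subalgebra containing $\A_1(G)$, I would then apply it with $\A_2(G)=\CB(G)$ and the neighbourhood $V$ to produce $f\in\CB(G)$ with $\|f\|_\infty=1$, $f(G\setminus VT)=\{0\}$, and $\|f-\phi\|_{T_\eta}\geq 1$ for every $\phi\in\A_1(G)$ and every $\eta<\kappa$. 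For each $\mathbf{c}=(c_\eta)_{\eta<\kappa}\in\ell_\infty(\kappa)$ I would define $f_{\mathbf{c}}$ supported in $VT$ by $\restr{f_{\mathbf{c}}}{VT_\eta}=\mathbf{c}(\eta)\restr{f}{VT_\eta}$, exactly as in \eqref{eq:cont}. Because $f$ is continuous, part (i) of Lemma \ref{lem:unic} gives that $f_{\mathbf{c}}$ is continuous; as it is plainly bounded with $\|f_{\mathbf{c}}\|_\infty\leq\|\mathbf{c}\|_\infty$, we get $f_{\mathbf{c}}\in\CB(G)=\A_2(G)$. The map $\Psi(\mathbf{c})=f_{\mathbf{c}}+\A_1(G)$ is linear, and the norm estimate is literally the one in Theorem \ref{main:constr}: using $\restr{f_{\mathbf{c}}}{T_{\eta_0}}=c_{\eta_0}\restr{f}{T_{\eta_0}}$ and the choice of $f$ one reads off $\|\Psi(\mathbf{c})\|_{\CB(G)/\A_1(G)}\geq|c_{\eta_0}|$ for every $\eta_0<\kappa$, while $\|\Psi(\mathbf{c})\|\leq\|f_{\mathbf{c}}\|_\infty=\|\mathbf{c}\|$, so $\Psi$ is the desired linear isometry.

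The only delicate point — and the exact reason the conclusion survives the weakening of the hypotheses — is the substitution of part (i) for part (ii) of Lemma \ref{lem:unic}, and I expect no real obstacle there. Part (i) was established invoking only the relation $UT_\eta\cap UT_{\eta'}=\emptyset$ to see that $f$ is well defined and to verify continuity at interior and boundary points of $VT$; it never used admissibility, the inclusion in $\luc(G)$, or the Veech-type freeness argument on $G^\A$, all of which entered solely in part (ii). Dropping right $U$-uniform discreteness therefore costs nothing, provided one is content to realise the copy of $\ell_\infty(\kappa)$ inside $\CB(G)/\A_1(G)$ rather than inside a quotient by an admissible subalgebra sitting in $\luc(G)$.
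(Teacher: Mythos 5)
Your proposal is correct and is essentially the paper's own argument: the authors prove the corollary in one line by observing that the proof of Theorem \ref{main:constr} goes through verbatim upon replacing the appeal to part (ii) of Lemma \ref{lem:unic} with part (i), which is exactly the substitution you identify and justify. Your tracing of where admissibility, the inclusion in $\luc(G)$, and right $U$-uniform discreteness are consumed — only in part (ii), while part (i) needs just $UT_\eta\cap UT_{\eta^\prime}=\emptyset$ and continuity — matches the paper's reasoning precisely.
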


\begin{proof}
  The proof of Theorem \ref{main:constr} remains valid in this case applying (i) of Lemma \ref{lem:unic} instead of (ii).
\end{proof}

\begin{remark}
Two $C^\ast$-subalgebras of $\ell_\infty(G)$ may be different,  and yet
produce a small quotient (i.e., separable), for example if $G$ is a
minimally weakly almost periodic group (\cite{chou90}, \cite{ru},
\cite{V}) then $\wap(G)/\ap(G)=C_0(G)$. If $G=SL(2,\mathbb R),$ then
$\wap(G)=C_0(G)\oplus\mathbb C1,$ and so $\wap(G)/C_0(G)=\mathbb
C.$
In the theorem and corollary above, we have just met conditions under which this is not so.
\end{remark}

\begin{corollary} Under the hypotheses of Theorem \ref{main:constr}
or Corollary \ref{cor:main:constr}, the quotient space
$\A_2(G)/\A_1(G)$ is non-separable.
\end{corollary}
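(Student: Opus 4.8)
The plan is to read off the non-separability directly from the isometric embedding produced by the two preceding results, using only the fact that $\ell_\infty(\kappa)$ is itself non-separable for every infinite cardinal $\kappa$. First I would invoke Theorem \ref{main:constr} (or, in the alternative hypothesis, Corollary \ref{cor:main:constr}) to obtain a \emph{linear isometry} $\Psi\colon \ell_\infty(\kappa)\to \A_2(G)/\A_1(G)$ (respectively into $\CB(G)/\A_1(G)$). Its image $\Psi\bigl(\ell_\infty(\kappa)\bigr)$, carrying the restriction of the quotient norm, is then isometric to $\ell_\infty(\kappa)$, so it suffices to show that the latter is non-separable and to recall that every subspace of a separable metric space is separable.

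The only genuine content is therefore the non-separability of $\ell_\infty(\kappa)$, which I would establish by exhibiting an uncountable $1$-separated family. Since $\kappa$ is infinite (throughout the paper $\kappa$ is the compact covering number of a non-compact group, hence $\kappa\ge\omega$), the characteristic functions $\{1_A\colon A\subseteq \kappa\}$ all lie in the unit sphere of $\ell_\infty(\kappa)$ and satisfy $\|1_A-1_B\|_\infty=1$ whenever $A\ne B$. This is a family of cardinality $2^\kappa>\omega$ in which any two distinct members are at distance $1$. A $1$-separated set in a separable metric space must be countable, because the open balls of radius $1/2$ about its points are pairwise disjoint and each would have to meet a fixed countable dense set; hence $\ell_\infty(\kappa)$ is non-separable.

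Putting these together: if $\A_2(G)/\A_1(G)$ were separable, then so would be its subset $\Psi\bigl(\ell_\infty(\kappa)\bigr)$, and pulling a countable dense set back through the isometry $\Psi$ would make $\ell_\infty(\kappa)$ separable, a contradiction. The same argument applies verbatim with $\CB(G)$ in place of $\A_2(G)$ under the hypotheses of Corollary \ref{cor:main:constr}. There is no real obstacle here; the whole weight of the statement rests on Theorem \ref{main:constr}, and the present corollary is just the observation that an isometric copy of a non-separable Banach space cannot sit inside a separable one.
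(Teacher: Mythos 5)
Your proof is correct and coincides with what the paper intends: the corollary is stated there without proof, as an immediate consequence of the isometric embedding of $\ell_\infty(\kappa)$ from Theorem \ref{main:constr} (resp.\ Corollary \ref{cor:main:constr}), and your expansion — the $1$-separated family $\{1_A\colon A\subseteq\kappa\}$ of size $2^\kappa$ showing $\ell_\infty(\kappa)$ non-separable, together with the fact that subspaces of separable metric spaces are separable — is exactly the standard argument being invoked. Your explicit remark that $\kappa\geq\omega$ in all of the paper's applications is the right caveat, since for finite $\kappa$ the embedding of $\ell_\infty(\kappa)$ alone would yield nothing.
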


\section{Interpolation sets }

The definitions in this section gather the topological group-theoretic
properties that will correspond to  the interpolation sets needed in the three sections that follow.
Once these interpolation sets are at hand, an application of Theorem \ref{main:constr}
and Corollary \ref{cor:main:constr}
will lead immediately to the desired conclusion on the quotients.

In addition to the uniformly discrete sets defined in the previous sections we shall also need  the following sets.

\begin{definition}
Let $G$ be a non-compact topological group.
We say that a subset $S$ of $G$ is
\begin{enumerate}
\item
{\it right translation-finite} if every  infinite subset $L\subseteq G$
contains a finite subset $F$ such that $\bigcap\{b^{-1}T\colon b\in
F\}$ is finite; {\it left translation-finite} if every  infinite subset $L\subseteq G$
contains a finite subset $F$ such that $\bigcap\{Tb^{-1}\colon b\in
F\}$ is finite; and {\it translation-finite} when it is both right and left translation-finite.
\item
{\it right translation-compact} if every  non-relatively compact subset $L\subseteq G$
contains a finite subset $F$ such that $\bigcap\{b^{-1}S\colon b\in
F\}$ is relatively compact; {\it left translation-compact} if every  non-relatively compact subset $L\subseteq G$
contains a finite subset $F$ such that $\bigcap\{Sb^{-1}\colon b\in
F\}$ is relatively compact; and \emph{translation-compact} when it is both left and right translation-compact.
\item a \emph{right $t$-set} (\emph{left $t$-set}) if there exists a compact subset $K$ of $G$ containing $e$ such that $gS\cap S$ (respectively, $Sg\cap S$) is
relatively compact for every $g\notin K$; and a \emph{$t$-set} when it is both a  right and a left $t$-set.
 \end{enumerate}
\end{definition}

We  also need  to establish the range of locally compact groups to which our methods apply in the next two sections, these are those locally compact groups  for which the existence of  a good supply of $\wap$-functions is guaranteed.

Recall that a locally compact group $G$ is an {\it $IN-$group} if it has an invariant neighbourhood of the identity.
We recall also from \cite{chou75}, that a locally compact group $G$
is an {\it E-group} if it contains a non-relatively compact set $X$
such that for each neighbourhood $U$ of $e,$ the set \[\bigcap
\{x^{-1}Ux: x\in X\cup X^{-1}\}\]is again a neighbourhood of $e.$
The set $X$ is called an {\it E-set}. This is a large class of
locally compact groups. This    includes of course all non-compact
$SIN-$groups, the groups with a non-compact centre such as the
matrix group $GL(n,\mathbb{ R})$, and the direct product of any
$E$-group with any locally compact group.

 A detailed study  of  approximable $\luc$- and
 $\wap(G)$-interpolation sets,
 with some   precise characterizations,  is carried  out in the recent paper  \cite{FG}. We summarize in Lemma \ref{lem:lucint} the results that will be needed in the present paper.

 \begin{lemma}\label{lem:lucint} (\cite[Lemma 4.8 and Proposition 3.3 (iii)]{FG})
Let $G$ be a topological group and let $T\subset G$.
\begin{enumerate}
\item If the underlying topological space of $G$ is normal, then all  discrete closed subsets of   $G$ are approximable $\CB(G)$-interpolation sets.
\item \label{i}If $T$ is  right (resp. left) uniformly discrete ,
then  $T$ is an approximable $\luc(G)$-interpolation set (resp.
$\ruc(G)$-interpolation set).
\item If $G$ is assumed to be  metrizable, then  every $\luc(G)$-interpolation set (resp. $\ruc(G)$-interpolation set) is right (left) uniformly discrete.
\item \label{iii} If $G$ is an $E$-group and $T$ is  an $E$-set in $G$ which is right (or left) uniformly discrete with respect to $U^2$  for some neighbourhood $U$ of the identity such that $UT$ is translation-compact, then $T$ is an approximable $\wap_0(G)$-interpolation set.
    \item If $G$  is a metrizable $E$-group,   $T\subset G$ is a $\wap(G)$-interpolation set if and only if  $UT$ is translation-compact for some compact neighbourhood $U$ of the identity such that $T$ is right (or left) uniformly discrete with respect to $U^2$.
\end{enumerate}
\end{lemma}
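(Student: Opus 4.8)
The plan is to treat the five statements in two groups: the extension-type results for $\CB(G)$ and $\luc(G)$ (parts (i)--(iii)), which rest on classical separation and uniform-continuity arguments, and the weakly almost periodic statements (parts (iv)--(v)), which require the $E$-group machinery together with a weak-compactness criterion. Throughout I would lean on the reformulation of interpolation in Lemma \ref{lem:isdisc}: $T$ is an $\A(G)$-interpolation set precisely when disjoint subsets of $T$ have disjoint closures in $G^\A$, so that $\overline T^\A\cong\beta T_d$, while approximability is the extra demand that each cutoff of $V_1T_1$ inside $V_2T_1$ be realizable by a function of $\A(G)$.

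For (i) and (ii) I would first secure plain interpolation. For a closed discrete set in a normal space every bounded scalar function on $T$ is automatically continuous and extends to $G$ by Tietze's theorem, giving a $\CB(G)$-interpolation set; for a right $U$-uniformly discrete set the neighbourhoods $\{Vt:t\in T\}$ are pairwise disjoint once $V\sub U$, leaving room to set the extension equal to $f(t)$ on a plateau about each $t$ and to taper it to $0$ with a fixed right-uniformly-continuous bump, producing an extension in $\luc(G)$ (the left/$\ruc(G)$ case being symmetric). Approximability follows from the same devices applied to the characteristic data: in the normal case the closed discreteness of $T$ renders $\{V_1t\}$ a discrete family, so $\overline{V_1T_1}=\bigcup_t\overline{V_1}\,t\sub V_2T_1$ and Urysohn's lemma separates $\overline{V_1T_1}$ from $G\sm V_2T_1$; in the uniformly discrete case the disjointness of the blocks makes the cutoff $h$ a single globally defined right-uniformly-continuous function. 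The converse (iii) I would prove by contraposition: if $T$ is not right uniformly discrete, metrizability supplies a countable base and hence sequences $s_n\neq s_n'$ in $T$ with $s_n(s_n')\inv\to e$; assigning $f$ the value $1$ on the $s_n$ and $0$ on the $s_n'$ yields a bounded function whose putative $\luc(G)$-extension would have to satisfy $|f(s_n)-f(s_n')|\to 0$, contradicting that $T$ is an interpolation set.

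For the weakly almost periodic statements (iv)--(v) the strategy is to build, for each bounded $f\colon T\to\C$, an extension supported on $UT$ and to verify that it lands in $\wap_0(G)$. Here the $E$-set hypothesis is exactly what supplies enough $\wap$-functions: the fact that $\bigcap\{x\inv Ux:x\in X\cup X\inv\}$ is again a neighbourhood of $e$ provides the two-sided uniform control along $X$ that keeps the translates of the extension under control. Relative weak compactness of the orbit, which is the defining property of $\wap(G)$, I would certify through Grothendieck's double-limit criterion, and this is precisely where translation-compactness of $UT$ enters: for non-relatively-compact sequences the relevant products eventually leave the support $UT$, forcing both iterated limits to vanish and hence to agree. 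The stronger conclusion $f\in\wap_0(G)$ rather than merely $\wap(G)$ then follows because $UT$ is thin at infinity in the sense guaranteed by translation-compactness, so the unique invariant mean of $|f|$ is $0$. Finally (v) packages (iv) as the sufficiency half of an equivalence; for necessity I would invoke the $\luc$-converse of (iii) (legitimate since $\wap(G)\sub\luc(G)$) to force right uniform discreteness, and a failure-of-double-limit argument to force translation-compactness of $UT$.

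I expect the verification of weak almost periodicity in (iv) to be the main obstacle. Writing down an explicit extension is routine once the blocks $\{Ut\}$ are disjoint, but certifying that its set of translates is relatively weakly compact requires converting the combinatorial translation-compactness of $UT$ into Grothendieck's double-limit condition, and simultaneously controlling the two-sided translates by means of the $E$-set inequality; the coupling of these two mechanisms is the delicate point. The remaining items are comparatively soft, reducing to Tietze and Urysohn separation and to the local nature of right uniform continuity. Full details for all five assertions are carried out in \cite{FG}.
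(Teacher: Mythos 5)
You should first be aware that the paper contains no proof of this lemma: it is stated explicitly as a summary of results imported from \cite{FG} (``Lemma 4.8 and Proposition 3.3 (iii)''), so there is no internal argument to compare yours against line by line. Measured against the strategy of the cited source, your outline is the right one — Tietze/Urysohn separation and right-invariant bump functions for (i)--(iii), and the Grothendieck double-limit criterion driven by translation-compactness of $UT$, with the $E$-set condition supplying the two-sided uniform control, for (iv)--(v) — and your closing deferral of the details to \cite{FG} mirrors exactly what the paper itself does.

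One justification you give would, however, fail as written. In (i) you claim that closed discreteness of $T$ makes $\{V_1t\colon t\in T_1\}$ a discrete family, whence $\overline{V_1T_1}=\bigcup_{t\in T_1}\overline{V_1}\,t\subseteq V_2T_1$. In a general normal group this is false: a closed discrete set need not be uniformly discrete, the translates $V_1t$ can accumulate, and the closure of the union can strictly exceed the union of the closures (local finiteness is only guaranteed when, say, $G$ is locally compact, where compactness of $\overline{V_1}$ forces the family to be locally finite — but the lemma is stated for arbitrary normal $G$). The inclusion you actually need, $\overline{V_1T_1}\subseteq V_2T_1$, is saved by the standard group-theoretic fact that $\overline{A}\subseteq WA$ for every neighbourhood $W$ of $e$: choose open $W,V_1$ with $WV_1\subseteq U$ and set $V_2=WV_1$; then $\overline{V_1}\subseteq V_2\subseteq U$ and $\overline{V_1T_1}\subseteq WV_1T_1=V_2T_1$, and since $V_2T_1$ is open, Urysohn applies to the disjoint closed sets $\overline{V_1T_1}$ and $G\setminus V_2T_1$. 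Beyond this repairable misstep, keep in mind that your treatment of (iv)--(v) is a plan rather than a proof: verifying that the extension satisfies the double-limit criterion, that its invariant mean vanishes (so that it lands in $\wap_0(G)$ and not merely $\wap(G)$), and, for the necessity half of (v), that the \emph{same} compact neighbourhood $U$ can be made to witness both $U^2$-uniform discreteness and translation-compactness of $UT$, are precisely the nontrivial contents of \cite{FG}, which both you and the paper cite rather than reprove.
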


 The following Lemma will be needed later on in Section 5.

  \begin{lemma}
   \label{lem:liftTC}
   Let $G$ be a locally compact group, let $H$ be a closed subgroup of $G$ and let $T\subset H$.
   \begin{enumerate}
   \item If $UT$ is  a
   right $t$-set in $H$ for some compact neighbourhood $U$ of the identity $e$ in $H$, then there is a compact neighbourhood $V$ of $e$ in $G$ such that $VT$ is a right $t$-set in $G$.
   \item If in addition $T$ is central, then the left analogue of Statment (i) holds also.
   \end{enumerate}
 \end{lemma}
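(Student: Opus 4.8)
The plan is to prove Lemma \ref{lem:liftTC} by lifting the right $t$-set property from the closed subgroup $H$ to the ambient group $G$. Recall that $UT$ being a right $t$-set in $H$ means there is a compact set $K_H\subseteq H$ containing $e$ such that $g(UT)\cap(UT)$ is relatively compact in $H$ for every $g\in H\setminus K_H$. The goal is to produce a compact neighbourhood $V$ of $e$ in $G$ and a compact witness $K_G\subseteq G$ so that $g(VT)\cap(VT)$ is relatively compact in $G$ for every $g\in G\setminus K_G$.

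First I would choose the neighbourhood $V$ in $G$ carefully. Since $G$ is locally compact and $H$ is closed, I would pick a compact neighbourhood $V$ of $e$ in $G$ small enough that $V\cap H\subseteq U$ (using that $U$ is a neighbourhood of $e$ in the subspace topology on $H$); a routine regularity argument gives such a $V$. The heuristic is that the "thickening" $VT$ in $G$ should not meet translates of itself any more wildly than $UT$ did inside $H$, because the $T$-part still lives in $H$ and the $V$-part is controlled. The key computation is to analyze, for $g\in G$, the set $g(VT)\cap(VT)$. A point in this intersection has the form $g v_1 t_1 = v_2 t_2$ with $v_1,v_2\in V$ and $t_1,t_2\in T$. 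I would rewrite this as $g = v_2 t_2 t_1^{-1} v_1^{-1}$, and the aim is to show that if $g$ avoids a suitable compact set $K_G$, then $t_2 t_1^{-1}$ is forced into a controlled (relatively compact in $H$) region, whence the whole intersection is relatively compact in $G$.

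The heart of the argument, and where I expect the main obstacle, is converting membership conditions in $G$ back into conditions inside $H$. The difficulty is that $g\in G$ need not lie in $H$, so the $t$-set hypothesis in $H$ cannot be applied to $g$ directly. The natural device is to absorb the $V$-factors: from $gv_1t_1=v_2t_2$ one gets $v_2^{-1}gv_1 = t_2 t_1^{-1}\in H$, so the element $h:=v_2^{-1}gv_1$ belongs to $H$ and equals the $H$-translation that the $t$-set hypothesis governs. I would then argue that if $g$ ranges outside a large compact subset $K_G$ of $G$, the corresponding $h=v_2^{-1}gv_1$ ranges outside the compact set $K_H$ of $H$ (choosing $K_G$ to contain $V K_H V$ or its closure, which is compact since $V$ and $K_H$ are compact). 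Applying the right $t$-set property of $UT$ in $H$ to $h\notin K_H$, the set $h(UT)\cap(UT)$ is relatively compact in $H$; since $t_2 = h t_1$ with $t_1\in T\subseteq UT$ and $t_2\in T\subseteq UT$, the pairs $(t_1,t_2)$ are confined to a relatively compact region of $H\times H$, forcing $v_2 t_2 = gv_1t_1\in g(VT)\cap(VT)$ into a relatively compact subset of $G$.

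Finally, for part (ii), when $T$ is central I would run the same scheme with the roles of left and right interchanged to obtain the left $t$-set conclusion; the extra hypothesis of centrality is exactly what lets me commute the $T$-factors past the $V$- and $g$-factors so that the computation $gv_1t_1=v_2t_2$ can be rearranged symmetrically (e.g. using $t_i g = g t_i$ to move group elements through the central $t_i$), making the right-sided argument transfer verbatim to the left-sided statement. I expect the only genuinely delicate points to be the bookkeeping of which compact sets serve as witnesses and the verification that "relatively compact in $H$" upgrades to "relatively compact in $G$" (which is immediate since $H$ is closed in $G$, so $H$-compact subsets are $G$-compact). The conceptual content is entirely in the identity $v_2^{-1}gv_1=t_2t_1^{-1}\in H$, which lets the ambient translation $g$ be replaced by an honest element of $H$ at the cost of the compact fudge factors coming from $V$.
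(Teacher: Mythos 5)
Your setup (choosing $V$ with $V\cap H$ controlled by $U$, the identity $v_2^{-1}gv_1=t_2t_1^{-1}\in H$, and the enlarged witness $K_G\supseteq VK_HV$) matches the paper's proof, but your final step has a genuine gap: the element $h=v_2^{-1}gv_1$ is \emph{not fixed} — it varies with the point of $g(VT)\cap(VT)$, ranging over the compact set $(V^{-1}gV)\cap H\setminus K_H$, which is in general infinite. The right $t$-set hypothesis, applied pointwise, only tells you that each individual slice $h(UT)\cap(UT)$ is relatively compact; the set of possible $t_2$'s is confined to the union $\bigcup_h \bigl(h(UT)\cap (UT)\bigr)$ over all such $h$, and a union of relatively compact sets over an infinite compact family of translates need not be relatively compact. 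Nothing in your sketch supplies the required uniformity, so the assertion that ``the pairs $(t_1,t_2)$ are confined to a relatively compact region of $H\times H$'' does not follow. Your closing claim that the conceptual content lies entirely in the identity $v_2^{-1}gv_1\in H$ is therefore not right: that identity is where the paper \emph{starts}, not where the work is.

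The paper closes exactly this gap with a net argument, and this is the step you would need to add. Assume $gVT\cap VT$ is not relatively compact and take a net $g_\alpha=v_\alpha t_\alpha=gw_\alpha s_\alpha$ in it with no convergent subnet; after passing to subnets, $v_\alpha\to v$ and $w_\alpha\to w$ in the compact set $V$, so the varying elements $v_\alpha^{-1}gw_\alpha=t_\alpha s_\alpha^{-1}$ converge to a \emph{single} $h=v^{-1}gw$, which lies in $H$ because $H$ is closed. Here the thickening by $U$ is essential (and this is precisely where the condition $V\cap H\subseteq U$, which your sketch states but never actually uses, comes in): since $hU$ is a neighbourhood of $h$ in $H$, eventually $t_\alpha s_\alpha^{-1}\in hU$, hence $t_\alpha\in hUs_\alpha\subseteq hUT$, so the non-convergent net $(t_\alpha)$ sits inside the single set $hUT\cap UT$. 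By the hypothesis this forces $h\in K_H$, whence $g=vhw^{-1}\in VK_HV$, contradicting $g\notin K_G$. (Alternatively, one can prove as a separate lemma that $C\,UT\cap UT$ is relatively compact for every compact $C\subseteq H$ disjoint from $UK_HU^{-1}$, at the cost of a larger witness — but that lemma is proved by the same subnet argument, so there is no way around it.) Your plan for (ii) via centrality is consistent in spirit with the paper's, but it inherits the same missing step.
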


 \begin{proof} Let $U$ be a compact neighbourhood of $e$ in $H$, and suppose that $UT$ is a right t-set in $H$.
 By definition there is a compact subset $K\subseteq H$ such that $gUT\cap UT$ is relatively compact whenever $g\notin K$.
 Let $V$ be a compact symmetric neighbourhood of the identity in $G$ such that $V\cap H= U$ and let $K^\prime=VKV$.

 Let $g\in G$ but $g\notin K^\prime$, and consider  a net $(g_\alpha)$ in $gVT \cap VT$ with no convergent subnet.
 Then,  for each $\alpha$, there are $v_\alpha, w_\alpha \in V$ and $t_\alpha, s_\alpha \in T$ such that
 $g_\alpha=v_\alpha t_\alpha=gw_\alpha s_\alpha,$ and so
 \[t_\alpha s_\alpha^{-1}= v_\alpha^{-1}gw_\alpha\quad\text{for every}\quad\alpha.\]
 Note that neither of the nets $(s_\alpha)$ and $(t_\alpha)$ has a convergent subnet since $V$ is compact.
  We can assume that $(v_\alpha)$ and $(w_\alpha)$ converge to $v,w\in V$, respectively.  Therefore $(t_\alpha s_\alpha^{-1})$
  is a net in $H$ which  converges to  $h=v^{-1}gw$. Since $H$ is closed, $h\in H$.
  Therefore, the net $(t_\alpha s_\alpha^{-1})$ is eventually in $(h V)\cap H=h (V\cap H)=h U$. This means that
  the net  $(t_\alpha)$ may be seen in $hUT \cap UT $, and therefore $hUT \cap U T$ is not relatively compact. But that would imply that $h\in K$, and so $g\in K^\prime $, whence a contradiction.
  Thus, $VT$ is a right $t$-set in $G.$

 To prove the analogous statement for left $t$-sets, we suppose in addition that $T$ is central and put again $K^\prime=VKV.$
 Using the fact that $T$ is central, the same argument leads to   \[s_\alpha^{-1}t_\alpha= w_\alpha gv_\alpha^{-1}\quad\text{for every}\quad\alpha.\]
 By taking subnets,  we see that $(t_\alpha)$ is eventually in $TUh\cap TU=UTh\cap UT$ with $h\in VgV$. Thus, $h$ must in $K,$ and so $g$ is in $K^\prime.$
 \end{proof}

To state a well-known necessary condition for a subset $T\subseteq G$ to be a $\B(G)$-interpolation set  we need the concept of large squares that we recall from \cite[Definition 3.3]{chou82}:

A finite subset $F$ of
 $G$ is an {\it $n$-square} if $F=AB$ where $|A|=|B|=n$ and $|F|=n^2$.
A subset of a group  is then said to {\it contain large squares}
 if it contains an $n$-square for every $n\in\mathbb N$.

Large squares are incompatible with Sidon sets, as proved  in \cite[Proposition 3.4]{chou82}. We restate here this theorem, stressing on $\B(G)$.

\begin{theorem}\label{B(G)} (Chou, \cite{chou82})
  Let $G$ be a topological group.
  A $\B(G)$-interpolation set  cannot contain large squares.
  \end{theorem}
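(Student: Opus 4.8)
The plan is to prove the contrapositive: I would show that if $T$ contains large squares, then $T$ cannot be a $\B(G)$-interpolation set. By Remark \ref{b(g)=sidon}, every $\B(G)$-interpolation set is a $B(G)$-interpolation set (a Sidon set), so it suffices to prove the statement for Sidon sets, and the cleanest route is to invoke Chou's characterization in Theorem \ref{choufs} at the level of the discrete group underlying $G$. Indeed, the notion of $B(G)$-interpolation set, the notion of large squares, and the conclusion are all purely algebraic and combinatorial, so I may as well pass to $G_d$ (the group $G$ with the discrete topology) and work there.

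The heart of the argument is to exploit the incompatibility between positive-definite functions and the combinatorial rigidity of an $n$-square. First I would fix, for each $n$, an $n$-square $F_n = A_n B_n \subseteq T$ with $|A_n|=|B_n|=n$ and $|F_n|=n^2$. The key estimate is the classical bound coming from the Cauchy--Schwarz inequality applied to a coefficient function $\phi \in B(G)$, $\phi(x)=\<\pi(x)\xi,\eta>$ for a unitary representation $\pi$: evaluating $\phi$ on the product set $A_n B_n$ and summing, one controls
\[
\Bigl| \sum_{a\in A_n}\sum_{b\in B_n} \varepsilon_{a}\,\delta_{b}\,\phi(ab)\Bigr|
\]
for suitable unimodular scalars by $\norm{\phi}_{B(G)}\cdot n$, rather than by $n^2$. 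This is precisely the mechanism by which a function of Fourier-Stieltjes type fails to separate a prescribed $\pm 1$ pattern on a large square: the pattern has $n^2$ degrees of freedom but $B(G)$ can only match $O(n)$ of them. I would then choose the target function $f$ supported on $F_n$ (extended by zero off $T$, so that $f(G\setminus T)=\{0\}$) carrying a pattern of values that no element of $B(G)$ can approximate within distance $1$ on $T$, and assemble these into a single bounded function witnessing the criterion of Theorem \ref{choufs}.

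The main obstacle I anticipate is making the quantitative gap uniform and turning the per-$n$ estimate into the single function $f$ with $\norm{f}_\infty=1$ required by Theorem \ref{choufs}, satisfying $\norm{\phi-f}_T \ge 1$ for \emph{all} $\phi\in B(G)$ simultaneously. The Cauchy--Schwarz bound gives, for each fixed $\phi$, a square $F_n$ on which $\phi$ deviates from the chosen pattern; the difficulty is that the offending $n$ depends on $\norm{\phi}_{B(G)}$, so a naive diagonalization must be arranged so that the supports $F_n$ are disjoint and the sup over $T$ genuinely dominates the deviation on at least one block, independently of $\phi$. I expect this to be handled exactly as in Chou's original argument for large squares \cite[Proposition 3.4]{chou82}, by packing the squares $F_n$ into pairwise disjoint blocks and placing a fixed unimodular pattern on each; the uniform lower bound then falls out because any $\phi$ of finite $B(G)$-norm must fail on all sufficiently large blocks at once. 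Once this is in place, Theorem \ref{choufs} immediately yields that $T$ is not a $B(G)$-interpolation set, hence not a $\B(G)$-interpolation set, completing the contrapositive.
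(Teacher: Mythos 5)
Your proposal is correct and follows essentially the same route as the paper: the paper's proof likewise passes to $G_d$, uses Remark \ref{b(g)=sidon} to conclude that a $\B(G_d)$-interpolation set is a $B(G_d)$-interpolation (Sidon) set, and then simply cites \cite[Proposition 3.4]{chou82}, whose internal Cauchy--Schwarz/disjoint-blocks mechanism is exactly what you sketch and ultimately defer to. The only points worth flagging are cosmetic: Remark \ref{b(g)=sidon} rests on Theorem \ref{choufs}, which is stated only for discrete groups, so it must be applied after discretizing (your reduction to $G_d$ already accommodates this), and your final re-entry into Theorem \ref{choufs} to assemble a single witnessing function is unnecessary, since showing that no $\phi\in B(G_d)$ interpolates the chosen pattern already negates Sidonness directly.
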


  \begin{proof} Let $G_d$ be the group $G$ with the discrete topology.
  If $T$ is a $\B(G)$-interpolation set, then $T$ is also a $\B(G_d)$-interpolation set. By  Remark  \ref{b(g)=sidon}, $T$ is then  a $B(G_d)$-interpolation set and, by \cite[Proposition 3.4]{chou82}, $T$ cannot contain large squares.
  \end{proof}

\section{ The quotients of $\wap_0(G)$  by $ C_0(G)$ and\\  $\wap(G)$ by $\ap(G)\oplus C_0(G)$}\label{wapapc0}
In \cite{chou75}, Chou considered $E$-groups and proved that the
quotient space $\wap_0(G)/C_0(G)$ contains a linear isometric copy of
$\ell_\infty$.
In this section, we strengthen this result and prove  that if $G$ is an
$E$-group, then there is a linear isometric copy of
$\ell_\infty(\kappa)$ in the quotient $\wap_0(G)/C_0(G)$
where $\kappa$ is the compact covering number of an
$E$-set contained in $G.$ In particular, $\kappa=\kappa(G)$ when $G$ is an  $SIN$-group.

Our method applies further to show  that the quotient $\wap(G)/\ap(G)\oplus C_0(G)$ is non-separable.

\begin{theorem}\label{theor:w0c0}
Let $G$ be a non-compact locally compact $E$-group having an
$E$-set $X$ with a   compact covering number $\kappa$. Then the
quotient space $\wap_0(G)/C_0(G)$ contains a linear isometric
copy of  $\ell_\infty(\kappa)$.
\end{theorem}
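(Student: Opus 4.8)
The plan is to verify the hypotheses of Theorem \ref{main:constr}, so that the isometry $\Psi\colon\ell_\infty(\kappa)\to\wap_0(G)/C_0(G)$ comes for free. Thus I set $\A_1(G)=C_0(G)\oplus\C 1$ and $\A_2(G)=\wap_0(G)\oplus\C 1$ (both unital $C^*$-subalgebras, with $\A_2(G)$ admissible as noted in the preliminaries), and I must produce a family $\{T_\eta:\eta<\kappa\}$ of disjoint sets whose union $T$ is an approximable $\wap_0(G)$-interpolation set, while each $T_\eta$ fails to be a $C_0(G)$-interpolation set, and such that $T$ is right $U$-uniformly discrete for some compact neighbourhood $U$ of $e$.

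First I would exploit the $E$-set $X$. Since $\kappa=\kappa(X)$ is the compact covering number of $X$, I want to thin $X$ out to a right uniformly discrete set of the same compact covering number. The idea is to choose a compact symmetric neighbourhood $U$ of $e$ small enough, and inside $X$ select a maximal right $U$-uniformly discrete subset $T$; by maximality $X\subseteq U^2T$ (roughly), so $T$ still needs $\kappa$ compact sets to cover it and hence $|T|\ge\kappa$ in the appropriate sense. Then I partition $T$ into $\kappa$ pieces $T_\eta$, each of which should be infinite (indeed each $T_\eta$ must be non-relatively-compact so as to fail being a $C_0(G)$-interpolation set). The key structural input is Lemma \ref{lem:lucint}(\ref{iii}): a right uniformly discrete $E$-set $T$ with $UT$ translation-compact is an approximable $\wap_0(G)$-interpolation set. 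So I need the partition pieces to be arranged so that $T$ remains an $E$-set and $UT$ stays translation-compact; this is where the $E$-set property pays off, because the defining intersection condition is inherited by the relevant subsets.

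For the failure of the $C_0(G)$-interpolation property I would argue that any infinite, non-relatively-compact $T_\eta$ cannot be a $C_0(G)$-interpolation set: a bounded function on $T_\eta$ that does not vanish at infinity (e.g.\ the constant function $1$) cannot be extended to a function in $C_0(G)\oplus\C$ vanishing at infinity off the constants, since the $T_\eta$ escape every compact set. More precisely, since $\A_1(G)=C_0(G)\oplus\C 1$, an $\A_1(G)$-interpolation set can carry only functions of the form ``constant plus something vanishing at infinity'', so a set on which we must realise \emph{all} bounded functions (for instance $\pm 1$-valued functions with infinitely many sign changes along the escaping net) cannot be $\A_1(G)$-interpolation; this is exactly condition (ii) of the theorem. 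I would state this as a short separate observation, essentially that a non-relatively-compact set is never a $C_0(G)$-interpolation set.

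With these three conditions in hand, Theorem \ref{main:constr} delivers the isometry. The main obstacle I anticipate is the simultaneous bookkeeping in the thinning-and-partitioning step: I must split $X$ into $\kappa$ pieces that are each non-relatively-compact (to kill the $C_0(G)$-interpolation property) while keeping the \emph{whole} union $T$ right uniformly discrete, an $E$-set, and with $UT$ translation-compact (to feed Lemma \ref{lem:lucint}(\ref{iii}) and get approximability for $\wap_0(G)$). Verifying that the partition can be chosen so that $UT$ remains translation-compact, and that each $T_\eta$ genuinely escapes all compact sets, is the delicate part; the translation-compactness of $UT$ is precisely the condition that governs whether $T$ survives as a $\wap$-type interpolation set, so I would spend most of the effort confirming it is preserved under the chosen partition. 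The final statement about $\kappa=\kappa(G)$ for $SIN$-groups is then immediate, since in a $SIN$-group one may take $X=G$ itself as the $E$-set.
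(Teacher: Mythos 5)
Your reduction is the right one, and it matches the paper's: take $\A_1(G)=C_0(G)\oplus\C 1$, $\A_2(G)=\wap_0(G)\oplus\C 1$, note that infinite subsets of a uniformly discrete set are closed, discrete and non-compact, hence never $C_0(G)$-interpolation sets, and feed a suitable $T=\bigcup_{\eta<\kappa}T_\eta$ into Theorem \ref{main:constr}. The genuine gap is in your construction of $T$. You propose a \emph{maximal} right $U$-uniformly discrete subset $T\subseteq X$; but maximality gives $X\subseteq U^2T$, i.e.\ $T$ is relatively dense in $X$ at the scale of $U$, and this works directly \emph{against} the translation-compactness of $UT$ that Lemma \ref{lem:lucint}(\ref{iii}) requires — which is exactly the condition you flag as ``the delicate part'' and then leave unverified. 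It is not just unverified: it fails. Take $G=X=\R$ (a $SIN$-, hence $E$-group), $U=[-1,1]$ and $T=4\Z$, which is a maximal right $U$-uniformly discrete subset. Then $UT=[-1,1]+4\Z$ is invariant under translation by every $b\in 4\Z$, so for the non-relatively compact set $L=4\Z$ every finite $F\subseteq L$ gives $\bigcap_{b\in F}(-b+UT)=UT$, which is not relatively compact; thus $UT$ is not translation-compact (and indeed $4\Z$ is not a $\wap(\R)$-interpolation set at all). So the maximality idea produces sets that are too big: $\wap$-interpolation forces thinness, and one must thin $X$ aggressively while preserving only its compact covering number, not try to keep $T$ cofinal in $X$ in the $U^2T$ sense.

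This is precisely what the paper's proof does instead. It invokes the transfinite recursion of Section 2 of \cite{F3}: having chosen $x_\beta$ for $\beta<\alpha<\kappa$, one picks $x_\alpha\in X\setminus X_\alpha$ where $X_\alpha=\bigcup_{\beta_1,\beta_2,\beta_3<\alpha}V^2x_{\beta_1}^{\epsilon_1}x_{\beta_2}^{\epsilon_2}V^2x_{\beta_3}^{\epsilon_3}V$, which is possible because $\kappa(X_\alpha)<\kappa=\kappa(X)$. The resulting $T=\{x_\alpha:\alpha<\kappa\}$ satisfies $\kappa(T)=|T|=\kappa$ (uniform discreteness makes the covering number equal the cardinality), is right $V^2$-uniformly discrete, and — this is what the carefully chosen products in $X_\alpha$ buy — $VT$ is a $t$-set, hence translation-compact, so Lemma \ref{lem:lucint}(\ref{iii}) applies and $T$ is an approximable $\wap_0(G)$-interpolation set; any partition of $T$ into $\kappa$ infinite pieces then satisfies the hypotheses of Theorem \ref{main:constr}. (Your side remarks are fine: non-relatively compact subsets of an $E$-set are again $E$-sets since the defining intersection only grows when the set shrinks, and for uniformly discrete sets ``infinite'' and ``non-relatively compact'' coincide. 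The $SIN$ observation $X=G$ is also correct.) To repair your proof, replace the maximality step by such a recursive avoidance construction guaranteeing the $t$-set property of $VT$.
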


\begin{proof}
Let $V$ be a fixed compact symmetric neighborhood of $e$. Then we consider a set $T\subset X$ as that constructed in Section 2 of \cite{F3}. This set has the following properties:
\begin{enumerate}
  \item $\kappa(T)=|T|=\kappa(X)$.
  \item $T$ is right $V^2$-uniformly discrete.
  \item $VT$ is a $t$-set.
\end{enumerate}
For completeness, we recall from \cite{F3} the construction of the set $T$ since this shall be needed in the proof.
We may assume that $e\in X$ and start with $x_0=e.$.
Suppose that the elements $x_\beta$ have been selected for all
$\beta<\alpha$ with $\alpha<\kappa$. Set $$X_\alpha= \bigcup\limits_{\beta_1,\beta_2,\beta_3<\alpha}V^2x_{\beta_1}^{\epsilon_1}x_{\beta_2}^{\epsilon_2}V^2x_{\beta_3}^{\epsilon_3}V,$$
where each $\epsilon_i=\pm1.$ Since $\kappa(X_\alpha)<\kappa,$ we pick $x_\alpha$ in $X\setminus X_\alpha$ for our set $T$. In this way, we form a set $T=\{x_\alpha:\alpha<\kappa\}$.

We obtain from Lemma
\ref{lem:lucint} that $T$ is   an approximable
$\wap_0(G)$-interpolation set. Since every  infinite subset of $T$ is uniformly discrete and $C_0(G)$-interpolation sets  must be relatively compact, and so finite (see the proof of Proposition 3.3 of \cite{FG})
any decomposition
$T=\bigcup_{\eta <\kappa} T_\eta$ as a disjoint union of $\kappa$-many infinite subsets leaves  us in position to  apply Theorem \ref{main:constr} and finish the proof.
\end{proof}

We deduce first that  the quotient $\wap(G)/\left(\ap(G)\oplus C_0(G)\right)$
contains an isomorphic copy of $\ell_\infty(\kappa)$.

\begin{corollary}\label{cor:wapapc0}
Let $G$ be a non-compact locally compact $E$-group having an
$E$-set $X$ with a  compact covering number $\kappa$. Then the
quotient space $\wap(G)/\left(\ap(G)\oplus C_0(G)\right)$ contains an \emph{isomorphic}
copy of  $\ell_\infty(\kappa)$.
\end{corollary}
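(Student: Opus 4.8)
The plan is to deduce Corollary \ref{cor:wapapc0} from Theorem \ref{theor:w0c0} by exploiting the direct-sum decomposition $\wap(G)=\ap(G)\oplus\wap_0(G)$ recorded in the introductory diagram. The key observation is that $C_0(G)\subseteq\wap_0(G)$, so that passing to the quotient by $\ap(G)\oplus C_0(G)$ inside $\wap(G)$ should leave the $\wap_0$-part essentially untouched modulo $C_0(G)$. Concretely, I would first argue that the natural inclusion $\wap_0(G)\embed\wap(G)$ induces a linear isomorphism
\[
\wap_0(G)/C_0(G)\;\longrightarrow\;\wap(G)/\bigl(\ap(G)\oplus C_0(G)\bigr).
\]
To see this, note that the map $f+C_0(G)\mapsto f+\bigl(\ap(G)\oplus C_0(G)\bigr)$ is well defined and surjective: surjectivity follows because any $g\in\wap(G)$ splits as $g=g_{\ap}+g_0$ with $g_{\ap}\in\ap(G)$ and $g_0\in\wap_0(G)$, and $g_{\ap}$ is killed in the target quotient. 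For injectivity, one needs $\wap_0(G)\cap\bigl(\ap(G)\oplus C_0(G)\bigr)=C_0(G)$; since the sum $\ap(G)\oplus\wap_0(G)$ is direct and $C_0(G)\subseteq\wap_0(G)$, a function in $\wap_0(G)$ that also lies in $\ap(G)\oplus C_0(G)$ must have trivial $\ap$-component, hence lies in $C_0(G)$.

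Having established this algebraic isomorphism of quotient spaces, I would then invoke Theorem \ref{theor:w0c0}, which supplies a linear isometric copy of $\ell_\infty(\kappa)$ inside $\wap_0(G)/C_0(G)$. Composing the isometric embedding $\ell_\infty(\kappa)\to\wap_0(G)/C_0(G)$ with the isomorphism above produces a linear embedding of $\ell_\infty(\kappa)$ into $\wap(G)/\bigl(\ap(G)\oplus C_0(G)\bigr)$. This is precisely why the statement of the corollary only claims an \emph{isomorphic} (as opposed to isometric) copy: the quotient-to-quotient map need not preserve norms exactly, because enlarging the subspace from $C_0(G)$ to $\ap(G)\oplus C_0(G)$ can only decrease quotient norms, so the induced map is norm-nonincreasing but one must check it is bounded below to conclude it is an isomorphism onto its image.

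The main obstacle I expect is exactly this last point: verifying that the induced map on quotients is bounded below, i.e. that the $\ap$-direction does not allow the $\wap_0$-representatives to be approximated arbitrarily well. The cleanest route is to use the invariant mean $\mu$ on $\wap(G)$, which annihilates both $C_0(G)$ and $\ap_0$-type oscillations and detects the $\wap_0$-component: since $\ap(G)\oplus\wap_0(G)$ is a genuine (topological) direct sum decomposition of $\wap(G)$ — this is the content of the second equality in the introductory diagram, with the associated projections being bounded — the quotient norm on $\wap(G)/\bigl(\ap(G)\oplus C_0(G)\bigr)$ is equivalent to the quotient norm on $\wap_0(G)/C_0(G)$ via these bounded projections. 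Thus boundedness below follows from the boundedness of the projection of $\wap(G)$ onto $\wap_0(G)$ along $\ap(G)$, and the composite map is the desired linear isomorphism onto its range, carrying the copy of $\ell_\infty(\kappa)$ along with it.
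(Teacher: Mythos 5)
Your proposal is correct and follows essentially the same route as the paper: the published proof simply invokes the decomposition $\wap(G)=\ap(G)\oplus\wap_0(G)$ (de Leeuw--Glicksberg), asserts that $\wap(G)/\bigl(\ap(G)\oplus C_0(G)\bigr)$ is isomorphic to $\wap_0(G)/C_0(G)$, and applies Theorem \ref{theor:w0c0}. You merely make explicit the details the paper leaves implicit --- well-definedness, bijectivity via $\wap_0(G)\cap\bigl(\ap(G)\oplus C_0(G)\bigr)=C_0(G)$, and boundedness below through the bounded projection onto $\wap_0(G)$ along $\ap(G)$ (automatic by the closed graph theorem, both summands being closed) --- which correctly accounts for why the corollary claims only an isomorphic, not isometric, copy.
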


\begin{proof}
We only have to recall that $\wap(G)=\ap(G) \oplus \wap_0(G)$, \cite{deL1}. Therefore
$\wap(G)/(\ap(G)\oplus C_0(G))$ is isomorphic to $\wap_0(G)/C_0(G)$, and apply then Theorem \ref{theor:w0c0}.
\end{proof}

If we want to use our Theorem \ref{main:constr} to obtain a linear \emph{isometric} copy of $\ell_\infty(\kappa)$ in the quotient $\wap(G)/\ap(G)$ we need  first an approximable $\wap(G)$-interpolation set which is not an $\ap(G)$-interpolation set. If $G$, for instance, is discrete this means we need a translation-finite set that is not an $I_0$-set. Such sets can be easily found in $\Z$, the additive group of integers: $T=\{3^n+n\colon n\in \N\}\cup\{3^n\colon n\in \N\}$  is such an example, see \cite[Example 1.5.2]{GH} for a (simple) proof. For arbitrary discrete groups, an example as simple as that  has escaped to us. A considerably  more complicated construction can be used to obtain an approximable  $\wap(G)$-interpolation set that is not a $\B(G)$-interpolation set, \emph{a fortiori} not an  $\ap(G)$-interpolation set when $G$ is an IN-group, a nilpotent group or a group with large enough centre,  see Section 5  (note that the set $T$ above \emph{is} a Sidon set, i.e., a $B(G)$-interpolation set).

 We present however, on the lines of Theorem \ref{main:constr}, an \emph{ad-hoc} construction  of a linear isomorphism  of $\ell_\infty(\kappa)$ into
  $\wap(G)/(\ap(G)\oplus C_0(G))$ with norm at most one whose inverse has norm at most $2$. A detailed look at the proof  reveals that it is actually based in finding an approximable
  $\wap(G)$-interpolation set that is not an \emph{approximable} $\ap(G)$-interpolation set. What makes this construction different from our general approach is that this set could even be an $\ap(G)$-interpolation set. Recall that in a non-compact locally compact group no $\ap(G)$-interpolation set is approximable.

%
%

%

\begin{theorem} Let $G$ be a non-compact, locally compact E-group having an E-set $X$ with a compact covering number $\kappa$. Then the Banach space $\wap(G)$ contains a linear isometric copy $L$ of  $\ell_\infty(\kappa)$
such that \[\frac{\|f\|}2\le\|f+\ap(G)\oplus C_0(G)\|_q\le\|f\|\quad\text{for every}\quad f\in L.\]
In particular,
the quotient space $\wap(G)/(\ap(G)\oplus C_0(G))$ is non-separable.
\end{theorem}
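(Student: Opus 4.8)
The plan is to recycle verbatim the machinery of Theorem \ref{theor:w0c0}: fix the set $T=\bigcup_{\eta<\kappa}T_\eta\subset X$ (with each $T_\eta$ infinite), together with the compact symmetric neighbourhood $V$ and the function $f\in\wap_0(G)$ produced there. Recall $\|f\|_\infty=1$, that $f$ vanishes off $VT$, and that (from Lemma \ref{lem:nointer} with $\A_1=C_0(G)$, taking $\phi=0$) one has $\|f\|_{T_\eta}=1$ for every $\eta$. For $\mathbf{c}=(c_\eta)_{\eta<\kappa}\in\ell_\infty(\kappa)$ I form exactly the same $f_{\mathbf{c}}\in\wap_0(G)$, supported on $VT$ with $\restr{f_{\mathbf{c}}}{VT_\eta}=c_\eta\restr{f}{VT_\eta}$. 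Since $e\in V$ we get $\sup_{VT_\eta}|f|=1$ for each $\eta$, hence $\|f_{\mathbf{c}}\|_\infty=\sup_\eta|c_\eta|=\|\mathbf{c}\|$, so $\mathbf{c}\mapsto f_{\mathbf{c}}$ is a linear \emph{isometry} of $\ell_\infty(\kappa)$ into $\wap(G)$; let $L$ be its range. The upper estimate is immediate: $\|f_{\mathbf{c}}+\ap(G)\oplus C_0(G)\|_q\le\|f_{\mathbf{c}}\|_\infty=\|\mathbf{c}\|$.

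The substance is the lower estimate, and the constant $\tfrac12$ will be produced by the decomposition $\wap(G)=\ap(G)\oplus\wap_0(G)$ of \cite{deL1}. Let $P\colon\wap(G)\to\ap(G)$ be the associated projection, with kernel $\wap_0(G)$. The key input I will use is that $P$ is a \emph{norm-one} projection; granting this, $\|I-P\|\le\|I\|+\|P\|=2$. Now fix any $\phi=a+c$ with $a\in\ap(G)$ and $c\in C_0(G)\subseteq\wap_0(G)$. Since $Pa=a$, $Pc=0$ and $Pf_{\mathbf{c}}=0$, applying $I-P$ gives $(I-P)(f_{\mathbf{c}}-\phi)=f_{\mathbf{c}}-c$, whence
\[\|f_{\mathbf{c}}-\phi\|_\infty\ge\frac{1}{\|I-P\|}\,\|f_{\mathbf{c}}-c\|_\infty\ge\tfrac12\,\|f_{\mathbf{c}}-c\|_\infty.\]
To close, I invoke Theorem \ref{theor:w0c0} itself: the map $\mathbf{c}\mapsto f_{\mathbf{c}}+C_0(G)$ is an isometry into $\wap_0(G)/C_0(G)$, so $\|f_{\mathbf{c}}-c\|_\infty\ge\inf_{c'\in C_0(G)}\|f_{\mathbf{c}}-c'\|_\infty=\|\mathbf{c}\|$. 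Taking the infimum over $\phi$ yields $\|f_{\mathbf{c}}+\ap(G)\oplus C_0(G)\|_q\ge\tfrac12\|\mathbf{c}\|=\tfrac12\|f_{\mathbf{c}}\|_\infty$, which is the displayed double inequality for every $f\in L$.

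For the final clause, note that an $E$-set is not relatively compact, so $\kappa=\kappa(X)$ is infinite and $\ell_\infty(\kappa)$ is non-separable. The composite $\mathbf{c}\mapsto f_{\mathbf{c}}+\ap(G)\oplus C_0(G)$ is linear and, by the two estimates, bounded below by $\tfrac12\|\mathbf{c}\|$ and above by $\|\mathbf{c}\|$; hence it is a linear homeomorphism of $\ell_\infty(\kappa)$ onto a subspace of $\wap(G)/(\ap(G)\oplus C_0(G))$, forcing the quotient to be non-separable.

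I expect the \emph{only} delicate point to be the assertion that the de Leeuw--Glicksberg projection $P$ has norm one; all else is bookkeeping layered on top of Theorem \ref{theor:w0c0}. I would justify it as follows: the translations $\{L_s:s\in G\}$ act on $\wap(G)$ as isometries with relatively weakly compact orbits, so their weak-operator closure $\Sigma$ is a compact semitopological semigroup, and $P$ is its minimal idempotent, with range $\ap(G)$ and kernel $\wap_0(G)$. As $P$ is a weak-operator limit of a net of the $L_s$ and the operator norm is lower semicontinuous for that topology, $\|P\|\le 1$, hence $\|P\|=1$; equivalently, any realization of the almost periodic part of $f$ as a mean of translates of $f$ is manifestly a contraction. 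This is precisely the mechanism that fixes the constant $2$ in the statement and the norm $\le 2$ of the inverse announced before the theorem.
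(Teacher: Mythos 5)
Your proposal is correct, but it follows a genuinely different route from the paper's. The paper proves the lower estimate by a direct $\epsilon$-argument with no projection in sight: assuming $\|2f_{\mathbf{c}}+g+h\|<1-\epsilon$ with $g\in \ap(G)$, $h\in C_0(G)$, it exploits the specific recursive construction of $T\subset X$ from \cite{F3} (for fixed $n$, the point $xx_n^{-1}x_m$ lies in $VT$ for at most two values of $m$), the decay of $h$ along the non-relatively compact set $\{xx_n^{-1}x_m\colon m\in\N\}$, and the almost periodicity of $g$ (a choice of $n,m$ with $\|r_{x_n}g-r_{x_m}g\|<\epsilon/2$) to reach the contradiction $2-\epsilon<2-\epsilon$. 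You instead channel everything through one functional-analytic fact: the de Leeuw--Glicksberg projection $P$ of $\wap(G)$ onto $\ap(G)$ along $\wap_0(G)$ is contractive, so $\|I-P\|\le 2$, and then $(I-P)(f_{\mathbf{c}}-a-c)=f_{\mathbf{c}}-c$ reduces the lower bound to the isometry already established in Theorem \ref{theor:w0c0}. This is sound: $f_{\mathbf{c}}\in\wap_0(G)$ (that theorem's isometry lands in $\wap_0(G)/C_0(G)$), $C_0(G)\subseteq \wap_0(G)$, and $\|P\|=1$ is classical --- it is perhaps most cleanly seen from the formula $Pf(s)=f^{\wap}(su)$, where $u$ is the identity of the minimal ideal $K(G^{\wap})$, which is a compact topological group; your enveloping-semigroup sketch also works, since the operator norm is lower semicontinuous along bounded WOT-convergent nets and the minimal idempotent is unique precisely because the kernel is a group, but in a final write-up you should cite this (e.g.\ \cite{deL1} or \cite[Chapter 4]{BJM}) rather than leave it at sketch level. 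As for what each approach buys: yours is shorter, explains conceptually where the constant $2$ comes from (it is just $\|I-P\|\le2$), and would apply verbatim to any pair of algebras splitting compatibly over $\ap(G)$; the paper's argument is elementary and self-contained modulo the construction of $T$, and it deliberately exhibits the mechanism the authors want to stress --- an approximable $\wap(G)$-interpolation set that is not an \emph{approximable} $\ap(G)$-interpolation set --- which your projection argument bypasses entirely.
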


\begin{proof} Let $V$ be a fixed compact symmetric neighbourhood of $e$ in $G$,
$T$ be the approximable $\wap(G)$-interpolation set used in Theorem \ref{theor:w0c0}
and $\{T_\eta:\eta <\kappa\}$ be any partition of $T$ into $\kappa$-many infinite subsets.

We begin with a function $f\in \wap(G)$ with $f(G\setminus VT)=\{0\}$ and  $f(T)=\{1\}$.
Consider  the
function $f_{\mathbf{c}}\in \wap(G)$,  supported in $VT$, defined  in Theorem \ref{main:constr}.
Clearly, the map
\[
\mathbf{c} \mapsto f_{\mathbf{c}}:\ell_\infty(\kappa)\to \wap(G)\]
 is a linear isometry, so
we only need to check that the quotient map
\[\Psi \colon \ell_\infty (\kappa) \to \wap(G)/(\ap(G)\oplus C_0(G)\]

satisfies $\|2f_{\mathbf{c}}+\ap(G)\oplus C_0(G)\|_q\ge \|\mathbf{c}\|$ for every $\mathbf{c}\in\ell_\infty(\kappa)$.
Without loss of generality, we may assume that $\|\mathbf{c}\|=1.$
We claim that $\|2f_{\mathbf{c}}+g+h\|\ge 1$ for every $g+h\in \ap(G)\oplus C_0(G).$
Suppose, otherwise, that $\|2f_{\mathbf{c}}+g+h\|<1$ for some $g+h\in \ap(G)\oplus C_0(G),$ pick $\epsilon>0$ such that $\|2f_{\mathbf{c}}+g+h\|<1-\epsilon.$
Then, in particular, $$|2f_{\mathbf{c}}(x)+g(x)+h(x)|<1-\epsilon\quad\text{for every}\quad x\in
T.$$
Fix $\eta<\kappa$ such that $1-\epsilon/2<|c_\eta|.$

Since $h\in C_0(G),$ we may fix as well $x\in T_\eta$ such that $|h(x)|<\epsilon/4.$
Let $(x_n)_{n\ge1}$ be any sequence in $T_\eta$ with $x^{\epsilon}\ne x_n$ for every $n\in \mathbb N$, $\epsilon=\pm1$.
Suppose  $xx_n^{-1}x_m=vx_\beta\in VT$ for some $\beta<\kappa$ and $v\in V.$
By the definition of $T$, this is possible for at most two $m's$:
$x_{\beta}=x^{\epsilon}$ and so $x_m=x_nx^{-1}vx^{\epsilon}$, or $x_{\beta}=x_n^{\epsilon}$ and so
$x_m=x_nx^{-1}vx_n^{\epsilon}$.
In other words, for every fixed $n\in \mathbb N$, there exists at most two $m$ for which $x x_n^{-1}x_m\in VT.$
Therefore, for every fixed $n$, $f_\eta(x x_n^{-1}x_m)=0$ for every $m$ except maybe for these two $m's$.
Moreover, since for each $n$, the set $\{x x_n^{-1}x_m:m\in\mathbb N\}$ is not relatively compact,
we may choose $m$ such that $|h(x x_n^{-1}x_m)|<\epsilon/4$.

Now since $g\in \ap(G)$,  by taking subsquences if necessary, we may fix $n_0\in \mathbb N$ such that  that \[
\|r_{x_n}g-r_{x_m}g\|<\epsilon/2
\quad\text{for every} \quad n,\,m\ge n_0;\]
and so, in particular,
\begin{align*}|g(x)-g(xx_n^{-1}x_m)|=|g(xx_n^{-1}x_n)-g(xx_n^{-1}x_m)|&=|r_{x_n}g(xx_n^{-1})-r_{x_m}g(xx_n^{-1})|\\&
\le \|r_{x_n}g-r_{x_m}g\|<\epsilon/2.\end{align*}

Therefore, for every fixed $n\ne n_0$ and $m\ge n_0$ chosen suitably, we have
\begin{align*}
2-\epsilon<2|c_\eta|&= |2c_\eta f_\eta(x)-2c_\eta f_\eta(xx_n^{-1}x_m)|=\\&=
|2f_{\mathbf{c}}(x)-2f_{\mathbf{c}}(xx_n^{-1}x_m)+g(x)-g(xx_n^{-1}x_m)+h(x)-h(xx_n^{-1}x_m)\\&-g(x)+g(xx_n^{-1}x_m)-h(x)+h(xx_n^{-1}x_m)|\\&\le|2f_{\mathbf{c}}(x)+g(x)+h(x)|+|2f_{\mathbf{c}}(xx_n^{-1}x_m)+g(xx_n^{-1}x_m)+h(xx_n^{-1}x_m)|\\&+|g(x)-g(xx_n^{-1}x_m)|+|h(x)-h(xx_n^{-1}x_m)|\\&
\le \|2f_{\mathbf{c}}+g+h\|+\|2f_{\mathbf{c}}+g+h\|+\|r_{x_n}g-r_{x_m}g\|+|h(x)|+|h(xx_n^{-1}x_m)|\\&
 \le(1-\epsilon)+(1-\epsilon)+\epsilon/2+\epsilon/2=2-\epsilon.
  \end{align*}
  This is clearly absurd, so we must have $\|2f_{\mathbf{c}}+g+h\|\ge1$ as required.
\end{proof}

Both theorems in this section will be considerably improved in the next section when $G$ is an $IN$-group or a nilpotent group.

\section{ The quotient of $\wap(G)$ by $\B(G)$}

The situation is much more  delicate with $\wap(G)/\B(G)$.
Already in the cases dealt with by Rudin in \cite{ru} and by Ramirez in \cite{ra} proving that $\B(G)\subsetneq
\wap(G)$ the arguments were quite involved.
Elaborating on the work by Rudin  and Ramirez, Chou proved in \cite{chou82} that the quotient space
$\wap(G)/\B(G)$ contains a linear isometric copy of $\ell_\infty$ whenever $G$ is a non-compact, locally compact, $IN$-group or a
 nilpotent group. In all these papers the key
argument consists in constructing  a $t$-set that contains  large
squares. We follow here  that thread  and find copies of
$\ell_\infty(\kappa)$ for $\kappa$ as large as possible in $\wap(G)/
\B(G)$ by applying Theorem \ref{main:constr}.

More precisely, we shall strengthen Chou's theorems by showing that there is a copy of
$\ell_\infty(\kappa)$ in the quotient $\wap(G)/\B(G)$ when $G$ is either an IN-group or a nilpotent group and $\kappa=\kappa(G)$, and that, in general, for  every locally compact group a copy of
$\ell_\infty(\kappa(Z(G)))$ can be found in the quotient $\wap(G)/\B(G).$


\medskip

The following technical lemma establishes that a group cannot be
covered by $\beta$-cosets of finitely many different subgroups of
index larger than $\beta$. This is similar to a theorem, known at
least from the times of \cite{neum54}, in which only finitely many cosets are allowed.

 \begin{lemma} \label{lem:cover} Let $G$ be any group with $|G|=\kappa$. Suppose that
there is a finite collection $\{H_1,\ldots,H_n\}$ of subgroups of $G$
such that $G$ can be  covered by $\beta<\kappa$ right-cosets of
them, i.e., such that \begin{equation}\label{eq:cover}
G=\bigcup_{j=1}^n \bigcup_{i\in I_j} H_jx_{i,j} , \quad \mbox{ with
}|I_1|+\cdots+|I_n|=\beta<\kappa.\end{equation} Then some of the
subgroups $H_j$ has index at most $\beta$.
\end{lemma}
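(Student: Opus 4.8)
The plan is to argue by induction on the number $n$ of distinct subgroups appearing in the covering \eqref{eq:cover}, imitating B. H. Neumann's classical coset-shifting argument while keeping track of cardinals. First I would dispose of the case in which $\beta$ is finite: there the statement is exactly Neumann's theorem \cite{neum54} (the cosets of infinite-index subgroups may be discarded, so some subgroup has index at most the number of cosets $=\beta$). Hence I may assume $\beta$ is infinite, and I will use freely the absorption identities $\beta\cdot\beta=\beta$ and $\beta+\beta=\beta$. For the base case $n=1$ we have $G=\bigcup_{i\in I_1}H_1x_{i,1}$ with $|I_1|\le\beta$; since distinct right cosets of $H_1$ are disjoint, covering $G$ forces every right coset of $H_1$ to occur among the $H_1x_{i,1}$, so $[G:H_1]\le|I_1|\le\beta$, which is the desired conclusion.

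For the inductive step, if $[G:H_j]\le\beta$ for some $j$ there is nothing to prove, so I would assume (after relabelling) that $[G:H_n]>\beta$. The family $\{H_nx_{i,n}:i\in I_n\}$ consists of at most $\beta$ right cosets of $H_n$, whereas $[G:H_n]>\beta$; therefore some right coset $H_ny$ is disjoint from all of them, and consequently $H_ny\subseteq\bigcup_{j<n}\bigcup_{i\in I_j}H_jx_{i,j}$. The heart of the proof is then the shifting trick that eliminates $H_n$ from the covering: given $g\in H_nx_{i,n}$, write $g=hx_{i,n}$ with $h\in H_n$; then $hy\in H_ny$, so $hy\in H_jx_{i',j}$ for some $j<n$ and $i'\in I_j$, whence $g=hx_{i,n}\in H_j\,(x_{i',j}y^{-1}x_{i,n})$. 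Thus every element of $G$ lies either in one of the original cosets of $H_1,\dots,H_{n-1}$ or in one of the new cosets $H_j\,(x_{i',j}y^{-1}x_{i,n})$.

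It then remains to count the cosets produced. For each fixed $i\in I_n$ the relevant $H_jx_{i',j}$ are among the at most $\beta$ cosets of $H_1,\dots,H_{n-1}$ meeting $H_ny$, so the shifting introduces at most $\beta$ new cosets per $i$, hence at most $\beta\cdot\beta=\beta$ in total; together with the original at most $\beta$ cosets of $H_1,\dots,H_{n-1}$ this still gives at most $\beta$ cosets. I have therefore covered $G$ by at most $\beta$ right cosets of the $n-1$ distinct subgroups $H_1,\dots,H_{n-1}$, and the induction hypothesis furnishes some $H_j$ with $[G:H_j]\le\beta$.

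The main obstacle is precisely this elimination step: making the shifting $g\mapsto g^\ast=hy$ land inside the already-covered coset $H_ny$ and then transporting back, together with the cardinal bookkeeping that keeps the number of cosets pinned at $\beta$ (which is where the infinitude of $\beta$ is essential, and which is why the finite case must be handed to \cite{neum54} separately). I would emphasize that the hypothesis $\beta<\kappa$ is used only to make the conclusion non-vacuous, since for $\beta\ge\kappa$ the bound $[G:H_j]\le|G|=\kappa\le\beta$ holds trivially for every $j$.
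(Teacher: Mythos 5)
Your proof is correct, and it eliminates $H_n$ by a genuinely different mechanism than the paper does. Both arguments are inductions on $n$ and open with the same move: since the listed cosets of $H_n$ number at most $\beta<[G:H_n]$, some coset $H_ny$ is disjoint from all of them, hence $H_ny\subseteq\bigcup_{j<n}\bigcup_{i\in I_j}H_jx_{i,j}$. From there the paper translates \emph{inside} $H_n$: it rewrites this as $H_n=\bigcup_{j<n}\bigcup_{i}(H_j\cap H_n)y_{i,j}$, applies the inductive hypothesis to the group $H_n$ covered by cosets of the intersections $H_j\cap H_n$ to extract a single $H_{j_0}$ with $[H_n:H_n\cap H_{j_0}]\le\beta$, re-expresses each $H_nx_{i,n}$ as a union of at most $\beta$ cosets of $H_{j_0}$, and only then invokes the inductive hypothesis a second time on the resulting cover of $G$ by cosets of $H_1,\dots,H_{n-1}$. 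You instead perform Neumann's classical one-step shift $g=hx_{i,n}\mapsto hy$, transporting each coset $H_nx_{i,n}$ directly into the translates $H_j\,(x_{i',j}y^{-1}x_{i,n})$ of cosets already present, so the inductive hypothesis is invoked once per step and intersections never enter. Your route buys brevity and more honest cardinal bookkeeping: you state explicitly that the count $\beta+\beta\cdot\beta=\beta$ requires $\beta$ infinite and dispatch finite $\beta$ to \cite{neum54} separately, whereas the paper's count ($|I_n|\cdot|S|$ new cosets added to the original $\beta$) silently uses the same absorption, and its inner application of the inductive hypothesis to $H_n$ tacitly assumes $\beta<|H_n|$ (harmless, since otherwise $[H_n:H_n\cap H_{j_0}]\le|H_n|\le\beta$ trivially, but unremarked). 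What the paper's detour buys is the intermediate fact $[H_n:H_n\cap H_{j_0}]\le\beta$ about intersections, though nothing in the sequel exploits it; for the application in Lemma \ref{kappa} your version of the proof serves equally well.
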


\begin{proof}
   This is proved by induction on $n$. The theorem is obvious if
$n=1$. Assume the theorem has been proved for unions of cosets of
$n-1$ different subgroups and suppose
\[
\label{11}
\tag{$\ast$} G=\bigcup_{j=1}^n \bigcup_{i\in I_j} H_jx_{i,j} , \quad \mbox{
with }|I_1|+\cdots+|I_n|=\beta<\kappa.\] If $|G:H_n|>\beta$, there
is $x\in G$ such that  $x\notin \bigcup_{i\in I_n} H_n x_{i,n}$.
Since \[H_n x\cap  H_nx_{i,n}=\emptyset\quad\text{ for
every}\quad i\in I_n,\] we obtain
\[ H_n x\subseteq\bigcup_{j=1}^{n-1}
\bigcup_{i\in I_j}  H_j x_{i,j},\]
and so
\[\label{2}
\tag{$\ast\ast$}H_n = \bigcup_{j=1}^{n-1}
\bigcup_{i\in I_j}  H_jx_{i,j}x^{-1} \cap H_n= \bigcup_{j=1}^{n-1}
\bigcup_{i\in I_j}  (H_j\cap H_n)y_{i,j},\] where the $y_{i,j}$'s
have been suitably chosen in $H_n$ (if $h_jx_{i,j}x^{-1} = y_{i,j}$ is in $ H_jx_{i,j}x^{-1} \cap H_n$, then $x_{i,j}x^{-1}=h_j^{-1} y_{i,j}$).
 Applying our inductive hypothesis, we
deduce that there is $j_0$ with $|H_n\colon (H_n\cap H_{j_0})|\leq \beta$.
(One may also proceed directly and replace $H_n$ from \eqref2 in \eqref{11}, then apply the inductive hypothesis).

There is therefore a family $\{z_s\colon s \in S\}\subset H_n$ with
$|S|\leq \beta$ such that $H_n=\bigcup_{s\in S} (H_n\cap H_{j_0})z_s$
and we may replace \eqref{eq:cover} by
\[ G=\left(\bigcup_{j=1}^{n-1} \bigcup_{i\in I_j}  H_jx_{i,j}\right)
\bigcup\left(\bigcup_{i\in I_n}\bigcup_{s\in S}
H_{j_0}z_sx_{i,n}\right).\] Since this is a cover of $G$ by cosets of at
most $n-1$ different  subgroups of $G$ we deduce from our inductive
hypothesis that some of the subgroups $H_j$, $1\leq j\leq n-1$, has
index at most $\beta$.
\end{proof}

\begin{lemma} \label{kappa} Let $G$ be a locally compact group
containing a   normal subgroup $H \subset G$. If $\left|G\colon H \right|=\kappa\geq \omega$, then
 $G$ contains a family $\{T_\eta\colon
\eta<\kappa\}$ of  subsets such that, putting $T=\bigcup_{\eta<\kappa} T_\eta$,
\begin{enumerate}
\item $Ht\cap Ht^\prime=\emptyset$ for every $t\neq t^\prime \in T$.
    \item  $T_\eta$ contains large squares for every $\eta<\kappa$.
\item  If  $U\subset H$ is  compact, then $UT g\cap UT$ and $gUT\cap UT$  are  relatively compact relatively compact for every $g\notin t^{-1}U^2t$ with $t\in T$.
\end{enumerate}
\end{lemma}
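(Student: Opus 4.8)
The plan is to transport the entire problem to the discrete quotient group $Q:=G/H$ and solve a purely combinatorial problem there. Let $\pi\colon G\to Q$ be the canonical projection; since $H$ is normal with $|G\colon H|=\kappa$, $Q$ is a discrete group of cardinality $\kappa$. Because $U\subseteq H$ is compact and (by the desired property (i)) $T$ will meet each coset of $H$ at most once, one checks that for $g\notin H$ the set $UTg\cap UT$ is relatively compact in $G$ if and only if $D\pi(g)\cap D$ is \emph{finite} in $Q$, where $D:=\pi(T)$, and similarly $gUT\cap UT$ corresponds to $\pi(g)D\cap D$; the point is that a relatively compact set meeting $T$ in distinct cosets can hit only finitely many cosets, while $U$ is compact. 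For $g\in H$ a direct computation with $\pi$ shows that $UTg\cap UT$ (resp. $gUT\cap UT$) is already \emph{empty} as soon as $g\notin t^{-1}U^2t$ for every $t\in T$. Thus (iii) is equivalent to: for every $\bar g\neq e$ in $Q$ both $D\bar g\cap D$ and $\bar g D\cap D$ are finite; property (i) amounts to $\pi$ being injective on $T$, and (ii) will follow by lifting.

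So the core is to construct $D\subseteq Q$ together with a partition $D=\bigsqcup_{\eta<\kappa}D_\eta$ such that (a) each $D_\eta$ contains large squares and (b) every nontrivial two-sided translate meets $D$ finitely. I would build this by transfinite recursion, treating the $\kappa\times\omega$ pairs $(\eta,n)$ as tasks and, at each step, adjoining to the appropriate $D_\eta$ a fresh $n$-square $\bar A\bar B$ (with $|\bar A|=|\bar B|=n$ and $|\bar A\bar B|=n^2$) disjoint from everything chosen so far. Producing the square itself is easy in an infinite group; the delicate requirement is (b). Within a single $n$-square the only translates realized more than once are the internal ones $\bar b_j^{-1}\bar b_{j'}$ and $\bar a_i^{-1}\bar a_{i'}$, each occurring at most $n$ times, which is harmless. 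The construction must therefore guarantee that these internal values, together with all cross translates $\bar d^{-1}\bar d'$ with $\bar d,\bar d'$ in the new square and in the old part, are \emph{new}, so that each $\bar g\neq e$ is produced at only one step and its total multiplicity stays finite. At stage $\sigma$ this forces the new elements to avoid a forbidden region assembled from the fewer than $\kappa$ elements already placed.

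This avoidance is exactly where Lemma \ref{lem:cover} enters. The forbidden region at each step is a union of boundedly many cosets of subgroups generated by the previously chosen elements; Lemma \ref{lem:cover} guarantees that $Q$ cannot be covered by $\beta<\kappa$ cosets of finitely many subgroups unless one of them has index at most $\beta$, and since the relevant subgroups are generated by fewer than $\kappa$ elements (hence of index $\kappa$ when proper), an admissible fresh element always exists. This is what keeps the recursion running through all $\kappa$ stages, crucially also when $\kappa$ is singular. Once $D=\bigsqcup_{\eta<\kappa}D_\eta$ is built, I lift as follows: for each new square $\bar A\bar B$ I choose arbitrary preimages $A,B\subseteq G$ of the \emph{factors} and place the genuine product set $AB$ into $T_\eta$. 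Then $\pi(AB)=\bar A\bar B$ has $n^2$ elements, so the products lie in distinct $H$-cosets; this yields (i) and exhibits $AB$ as an $n$-square inside $T_\eta$, giving (ii), while $\pi(T)=D$ delivers (iii) through the reduction above.

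I expect the main obstacle to be the tension inside (b): requiring each $D_\eta$ to \emph{contain} large squares pushes elements together, since a square forces many coincidences among its internal translates, whereas the $t$-set condition demands that every fixed nontrivial translate be hit only finitely often. Reconciling the two requires bookkeeping that cleanly isolates the unavoidable internal translates of each square and keeps everything else generic, and it is precisely for the element-selection in this bookkeeping—ensuring fresh admissible elements exist outside finitely many cosets even at singular stages—that the covering Lemma \ref{lem:cover} is the essential ingredient.
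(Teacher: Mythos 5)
Your outline reproduces the skeleton of the paper's proof: the paper also runs a transfinite recursion over the tasks $(\eta,n)\in\kappa\times\omega$, builds each $T_\eta$ as a union of $n$-squares $D_{\eta,n}C_{\eta,n}$, imposes all of its avoidance conditions modulo $H$ (so it is, in effect, working in $Q=G/H$ as you propose), and invokes Lemma \ref{lem:cover} at the critical selection step. Your reduction to $Q$ is sound in the one direction that is actually needed (finitely many cosets met implies relative compactness, since each coset contributes a set of the form $Ut$); but note that your stated \emph{equivalence} fails when $H$ is not open, because a relatively compact set can then meet infinitely many $H$-cosets, and that for $g\in H$ the emptiness of $gUT\cap UT$ forces only $g\in UU^{-1}$, not $g\in t^{-1}U^{2}t$ — the paper secures this case by arranging $e\in T$ (it starts with $x_{1,1,1}=y_{1,1,1}=e$). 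These are repairable blemishes.

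The genuine gap is in the element-selection step, which is where the whole difficulty of the lemma lives. When you adjoin a fresh square $\bar B\bar A$, the left-translate count forces you to prevent equalities $\bar b\,(\bar a\bar a'^{-1})\,\bar b^{-1}=\bar g$ with $\bar b\in \bar B$, $\bar a\neq \bar a'\in \bar A$ and $\bar g$ a previously realized value; for fixed $w=\bar a\bar a'^{-1}$ and $\bar g$, the solution set in $\bar b$ is a coset of the \emph{centralizer} $C_Q(w)$ — not a coset of a subgroup generated by previously chosen elements, as your plan asserts. Moreover, the justification ``generated by fewer than $\kappa$ elements, hence of index $\kappa$ when proper'' is false: $2\Z$ is proper of index $2$ in $\Z$, and for $\kappa=\omega$ (the $\sigma$-compact case, the most important one) finitely generated subgroups can have finite index. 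Centralizers can have small index — in an abelian $Q$ they equal $Q$ — and then the forbidden region genuinely covers $Q$, so Lemma \ref{lem:cover} does not hand you a fresh admissible element; it certifies an obstruction. The paper's proof resolves exactly this by a two-tier bookkeeping that your plan omits: the cardinals $f(\eta,n,k)$ and the sets $\mathfrak{C}_{f(\eta,n,k)}$ of elements with small conjugacy classes enter condition \eqref{eq:x1}, which makes the right-hand factors $x_{\eta,n,k}$ avoid the \emph{conjugacy classes} of all earlier small-class elements; then, in Claim 1, if no admissible $y_{\eta,n,k}$ existed, Lemma \ref{lem:cover} would produce a centralizer of index at most $f(\eta,n,k)$, hence an element $a_j$ with small conjugacy class of the form $x_{\eta,n,k_1}^{-1}x_{\eta,n,k_2}$, contradicting \eqref{eq:x1}. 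In other words, the existence of the new left factors is guaranteed only \emph{indirectly}, through constraints front-loaded onto the right factors at earlier stages; a recursion that merely avoids a forbidden region at each step, as you describe, can stall, and Lemma \ref{lem:cover} alone cannot restart it.
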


\begin{proof} For each  $\eta<\kappa$, let in  this proof
$\mathfrak{C}_\eta(H)$ denote the set \[\mathfrak{C}_\eta(H)=\{ g\in G
\colon \left|\mathfrak{Cl}(gH)\right|\leq \eta\},\] where
$\mathfrak{Cl}(gH)$ denotes the conjugacy class of $gH$ in $G/H$. If
$A\subseteq G$,  $\mathfrak{Cl}(A)$  will stand for the set
$\{g^{-1}ag\colon g\in G, a\in A\}$.
For $n<\omega$, we define   $\langle A\rangle_n$ to be  the set of all products of at most $n$ elements in $A\cup A^{-1}$.


We  define for each $\eta <\kappa$ and $n<\omega$ two collections of
finite sets
\[ C_{\eta,n}=\left\{x_{\eta,n,k}\colon 1\leq k\leq n\right\} \quad \mbox{ and }
\quad  D_{\eta,n}=\left\{y_{\eta,n,k}\colon 1\leq k\leq n\right\}.\]

These sets are defined recursively. First we order the set
 $\kappa\times
\omega$ in the canonical way (\cite[3.12]{jech}): For $\eta,\eta^\prime <\kappa$ and $n,n^\prime<\omega$, we define  $(\eta,n)<(\eta^\prime,n^\prime)$ if either
$\max(\eta,n)<\max(\eta^\prime,n^\prime)$, or
$\max(\eta,n)=\max(\eta^\prime,n^\prime)$ and $\eta<\eta^\prime$,
or $\max(\eta,n)=\max(\eta^\prime,n^\prime)$, $\eta=\eta^\prime $ and $n<n^\prime$.
 This way, the cardinal of the set $\left\{(\eta,n)\colon (\eta,n)<(\eta_0,n_0)\right\}$ is  less than $\kappa$ for every $(\eta_0,n_0)\in \kappa\times \omega$.

We now define $x_{1,1,1}=y_{1,1,1}= e$  and set $C_{1,1}=\{x_{1,1,1}\}$, $D_{1,1}=\{y_{1,1,1}\}$.

Assume that the sets
$C_{\gamma,j}$
and $D_{\gamma,j}$ and the elements $x_{\eta,n,l}$ have been defined for $(\gamma,j)<(\eta,n)$ and $1\leq l<k$. We describe how to define $x_{\eta,n,k}$. Define first the following subsets of $G/H$ by
\[ R_{\eta,n,k}=\bigcup_{(\gamma,j)<(\eta,n)}
\left(C_{\gamma,j}H \cup D_{\gamma,j}H\right) \bigcup \left(\bigcup_{l=1}^{k-1} \,x_{\eta,n,l}H\right)
\] and
\[ S_{\eta,n,k}=\left(\bigcup_{(\gamma,j)\leq(\eta,n)} C_{\gamma,j}H \right)\bigcup \left(\bigcup_{(\gamma,j)<(\eta,n)} D_{\beta,i}H \right)\bigcup \left(\bigcup_{l=1}^{k-1} \,y_{\eta,n,l}H\right).\]

Consider a cardinal number $f(\eta,n,k)$ such that
\[\kappa > f(\eta,n,k)>n^2|\langle R_{\eta,n,k}\rangle_8 |+|\langle S_{\eta,n,k}\rangle_8|.\]
 Note that, in particular, $f(\eta,n,k)$ is finite when $\kappa=  \omega$.

We now  choose
$x_{\eta,n,k}$ such that
\begin{equation}\label{eq:x1} x_{\eta,n,k}H
 \notin \Big\langle R_{\eta,n,k}, \mathfrak{Cl}\left(R_{\eta,n,k}\cap
\mathfrak{C}_{f(\eta,n,k)}\right)\Big\rangle_8.\end{equation}
Finding this
element is possible  because
$|\langle R_{\eta,n,k}\rangle_8 H/H|\leq f(\eta,n,k)<\kappa=|G\colon H|$ and conjugacy classes of elements of $\mathfrak{C}_{f(\eta,n,k)}$ do not have, by definition, more than $f(\eta,n,k)$-elements.

Once the elements in $C_{\eta,n}$ are  defined in this way, we
define the elements in $D_{\eta,n}$. If $y_{\eta,n,1},\ldots, y_{\eta,n,k-1}$ have already been defined, we use the following Claim to define $y_{\eta,n,k}$.

\textbf{Claim 1:} \label{cl1} \emph{There is   $y_{\eta,n,k}\in G$ such that
\begin{equation}\label{eq:y1} y_{\eta,n,k}H \notin \langle S_{\eta,n,k}\rangle_8 \end{equation}
and
\begin{equation}\label{eq:y2}
C_{\eta,n} C_{\eta,n}^{-1}\bigcap
y_{\eta,n,k}R_{\eta,n,k}y_{\eta,n,k}^{-1} H=\{e\}.\end{equation}
}

We first enumerate
$(C_{\eta,n}C_{\eta,n}^{-1}\setminus\{e\}) \bigcap
  \mathfrak{Cl}\left(R_{\eta,n,k}\right)H$ as $ \{a_1,\ldots,a_l\}$.
Let then \[R_j=\{r\in R_{\eta,n,k} \colon rH\in \Cl(a_jH)\} \] and choose   for each $j$, $1\leq j \leq l$, and
each $r\in R_j$
     an element $y_{j,r}\in G$ and an element $h_{j,r}\in H$
     with
     \[ r= y_{j,r}^{-1} a_j  h_{j,r}
     y_{j,r}.\]

Suppose now that no $y\in G$ can be found so that conditions
   \eqref{eq:y1} and \eqref{eq:y2} are satisfied.
In that case some $R_j$ must be  non-empty and, indeed,
\[G=\langle S_{\eta,n,k}\rangle_8 \bigcup \left(\bigcup_{j=1}^l \bigcup_{r\in R_j}
 \bigcup_{h\in H}
L_{j,r,h}\right),\]
 where
 \[ L_{j,r,h}=\left\{g\in G \colon r=g^{-1}a_j h g\right\}.\]

Observe now that
that $L_{j,r,h}H\subseteq C_{G/H}(a_jH)y_{j,r}H$, where \[C_{G/H}(a_jH)=\{ gH\in
G/H\colon ga_jH=a_jgH\}\] is the centralizer of $a_jH$. Therefore,
 \begin{equation}
   \label{eq:normal}
G/H= \langle S_{\eta,n,k}\rangle_8\, \bigcup \left(\bigcup_{j=1}^l \bigcup_{r\in R_j}
C_{G/H}( a_jH)y_{j,r}H\right). \end{equation}
If the  elements of the  set  $\langle S_{\eta,n,k}\rangle_8$ are  viewed as cosets of the trivial subgroup $\{e_{G/H}\}$,  we find $G/H$ as a union  of less than $n^2|R_{\eta,n,k}|+|\langle S_{\eta,n,k}\rangle_8|$ cosets. Since the latter number is less than $f(\eta,n,k)$  some of them must correspond to a subgroup of index at most $f(\eta,n,k)$,  by Lemma \ref{lem:cover}.
Thus, there is $j$, $1\leq j\leq n$ such that $|G/H\colon C_{G/H}(a_jH)|=|\mathfrak{Cl}(a_jH)|<f(\eta,n,k)$.
 We conclude that  $a_j \in \mathfrak{C}_{f(\eta,n,k)}$.

 Since
$a_jH \cap
  \mathfrak{Cl}\left(R_{\eta,n,k}\right)\neq \emptyset $, we find that
   $a_j\in  \mathfrak{Cl}(R_{n,\eta,k}\cap
\mathfrak{C}_{f(\eta,n,k)})H$. If $ a_j=x_{\eta,n,k_1}^{-1}x_{\eta,n,k_2}$
and $k_2>k_1$, this goes against condition \eqref{eq:x1} in the
choice of $x_{\eta,n,k_2}$ and finishes  the proof of the claim.

We have therefore constructed two families
\[C_{\eta,n}=\left\{x_{\eta,n,k}\colon 1\leq k\leq n\right\}\;\text{ and}\;
D_{\eta,n}=\left\{y_{\eta,n,k}\colon 1\leq k\leq n\right\},\quad
(\eta,n)\in \kappa\times \omega,\] with properties \eqref{eq:x1}, \eqref{eq:y1} and  \eqref{eq:y2}.
 We now check that the sets
\[T_\eta=\bigcup_{n<\omega}(D_{\eta,n}C_{\eta,n}),\quad\eta<\kappa\]  satisfy the
desired properties.

First of all we see that $|D_{\eta,n}C_{\eta,n}|=n^2$. If this were
not
 the case, there would be  $1\leq k_1,k_2,k_3,k_4\leq n$
 with $k_1\neq k_3$ and $k_2<k_4$ such that
$y_{\eta,n,k_1}x_{\eta,n,k_2}=y_{\eta,n,k_3}x_{\eta,n,k_4}$. But
then $y_{\eta,n,k_4}\in \langle C_{\eta,n},y_{\eta,n,k_2}\rangle_2$ which goes
against our choice of the elements in $D_{\eta,n}$.
Therefore, $T_\eta$ contains $n$-squares for every $n$.

%

In order to prove the last statement, we take  $U$  a compact subset of $H$ and $ g\notin U^2$. Choose
$t_1=y_{\eta_1,n_1,k_1}x_{\eta_1,n_1,k_1^\prime}$ and $t_2=
y_{\eta_2,n_2,k_2}x_{\eta_2,n_2,k_2^\prime}$, $u_1, u_2\in U$, with
\[ gu_1t_1=u_2t_2\in gUT\cap UT.\]

We order the  3-tuples $(\eta_i,n_i,k_i)$ lexicographically with respect to the last entry, that is
$(\eta_i,n_i,k_i)>(\eta_i^\prime,n_i^\prime,k_i^\prime)$ if either
$(\eta_i,n_i)>(\eta_i^\prime,n_i^\prime)$ or $(\eta_i,n_i)=(\eta_i^\prime,n_i^\prime)$ and $k_i>k_i^\prime$.

Assume that $(\eta_2,n_2)\geq (\eta_1,n_1)$.

Let now $gu_3t_3=u_4t_4$ with $t_3=y_{\eta_3,n_3,k_3}x_{\eta_3,n_3,k_3^\prime}$ and $t_4=y_{\eta_4,n_4,k_4}x_{\eta_4,n_4,k_4^\prime}$,
$u_3,u_4\in U$
be any other element of $ gUT\cap UT$.


Then
\begin{align*}
  g&=u_4y_{\eta_4,n_4,k_4}x_{\eta_4,n_4,k_4^\prime}
x_{\eta_3,n_3,k_3^\prime}^{-1}
y_{\eta_3,n_3,k_3}^{-1}u_3^{-1}\\&=u_2y_{\eta_2,n_2,k_2}x_{\eta_2,n_2,k_2^\prime}
x_{\eta_1,n_1,k_1^\prime}^{-1}y_{\eta_1,n_1,k_1}^{-1}u_1^{-1}.\end{align*}

Let $(\eta_{i_1},n_{i_1},k_{i_1})=\max\{(\eta_i,n_i,k_i)\colon 1\leq
i \leq 4\}$. There must then be $i_2$, $1\leq i_2\leq 4$, $i_2\neq
i_1$, with $\eta_{i_1}=\eta_{i_2}$, $n_{i_1}=n_{i_2}$ and $
y_{\eta_{i_1},n_{i_1},k_{i_1}} =y_{\eta_{i_2},n_{i_2},k_{i_2}}$, for
otherwise $y_{\eta_{i_1},n_{i_1},k_{i_1}}\in
S_{\eta_{i_1},n_{i_1},k_{i_1}}$.

%
 \textbf{Claim 2}: \emph{It is not possible that $(\eta_4,n_4)=
(\eta_3,n_3)>(\eta_2,n_2)$}. Should this be the case, then
$y_{\eta_4,n_4,k_4}=y_{\eta_3,n_3,k_3}$ and
\[ x_{\eta_3,n_3,k_4^\prime}x_{\eta_3,n_3,k_3^\prime}^{-1}\in
y_{\eta_3,n_3,k_3}^{-1}\left(y_{\eta_2,n_2,k_2}
x_{\eta_2,n_2,k_2^\prime}x_{\eta_1,n_1,k_1^\prime}^{-1}
y_{\eta_1,n_1,k_1}^{-1} \right)y_{\eta_3,n_3,k_3}
H.\] It
follows from  our
condition \eqref{eq:y2} in the choice  of
$y_{\eta_3,n_3,k_3}$ (Claim 1, page \pageref{cl1}) that
$x_{\eta_3,n_3,k_4^\prime}=x_{\eta_3,n_3,k_3^\prime}^{-1}$
but this is only possible if   $g\in U^2$ (take into account hat
$gu_3t_3=u_4t_4$ and that
 $y_{\eta_4,n_4,k_4}=y_{\eta_3,n_3,k_3}$), and the claim is proved.

The same argument shows that it is not possible that $(\eta_1,n_1)=(\eta_2,n_2)>(\eta_i,n_i)$,
with $i=3,4$.

We deduce that either
$(\eta_4,n_4)=(\eta_2,n_2)$ or $(\eta_3,n_3)=(\eta_2,n_2)$. But,
since the element $ gu_3t_3=u_4t_4$ was chosen arbitrarily in $gUT\cap UT$, it
 follows that
\[ g UT\cap UT \subseteq
(g U D_{\eta_2,n_2}C_{\eta_2,n_2})\cup (U
D_{\eta_2,n_2}C_{\eta_2,n_2}),\] and this is a relatively compact
set.

We  now prove check  that $  UTg\cap UT $ is compact. Choose
$t_1=y_{\eta_1,n_1,k_1}x_{\eta_1,n_1,k_1^\prime}$ and $t_2=y_{\eta_2,n_2,k_2}x_{\eta_2,n_2,k_2^\prime}$, $u_1, u_2\in U$
with \[ u_1t_1g=u_2t_2\in UTg\cap UT.\]

Let   \[ u_3 y_{\eta_3,n_3,k_3}x_{\eta_3,n_3,k_3^\prime}g=
u_4y_{\eta_4,n_4,k_4}x_{\eta_4,n_4,k_4^\prime}, \quad u_3,u_4\in U\]
be any other element of $ UTg\cap UT$ with $(\eta_3,n_3)\geq (\eta_4,n_4)$. We have that
\begin{align*}
  g&=t_1^{-1} u_1^{-1}u_2
t_2\\&=x_{\eta_3,n_3,k_3^\prime}^{-1}y_{\eta_3,n_3,k_3}^{-1}
u_3^{-1}u_4 y_{\eta_4,n_4,k_4}x_{\eta_4,n_4,k_4^\prime}\in
x_{\eta_3,n_3,k_3^\prime}^{-1}y_{\eta_3,n_3,k_3}^{-1}
y_{\eta_4,n_4,k_4}x_{\eta_4,n_4,k_4^\prime} H.
\end{align*}

As in the preceding case we assume that $(\eta_2,n_2)\geq (\eta_1,n_1)$ and it is enough to see that neither $(\eta_4,n_4)=(\eta_3,n_3)>(\eta_2,n_2)$, nor
$(\eta_2,n_2)= (\eta_1,n_1)>(\eta_3,n_3)$.

If $(\eta_4,n_4)=(\eta_3,n_3)>(\eta_2,n_2)$, then necessarily $(\eta_4,n_4,k_4)=(\eta_3,n_3,k_3)$ and   $t_1^{-1}
u_1^{-1}u_2 t_2\in
x_{\eta_3,n_3,k_3^\prime}^{-1}x_{\eta_3,n_3,k_4^\prime} H$.
If $k_3^\prime >k_4^\prime$, then $x_{\eta_3,n_3,k_3^\prime}\in R_{\eta_3,n_3,k_3^\prime}$, against the election of $x_{\eta_3,n_3,k_3^\prime}$. But
  $k_3^\prime= k_4^\prime$, implies that $t_3=t_4$ (recall that $(\eta_4,n_4,k_4)=(\eta_3,n_3,k_3)$) and  $g\in t_3^{-1}U^2t_3 $.
We rule out analogously  the possibility $(\eta_1,n_1)=(\eta_2,n_2)$
and  argue as above to prove that
\[  UTg\cap UT \subseteq
( U D_{\eta_2,n_2}C_{\eta_2,n_2})g\cup (U
D_{\eta_2,n_2}C_{\eta_2,n_2}),\] and conclude that $UTg\cap UT$ is
relatively compact.
\end{proof}

\begin{corollary}\label{cor:kappa}
Let $G$ be a locally compact group, $H$ a subgroup of $G$ and $N_G(H)=\{ g\in G\colon gH=Hg\}$ be the normalizer of $H$ in $G$.
 If $\left|N_G(H)\colon H \right|=\kappa\geq \omega$, then
 $N_G(H)$ contains a family $\{T_\eta\colon
\eta<\kappa\}$ of  subsets such that
\begin{enumerate}
\item if $T=\bigcup_{\eta<\kappa} T_\eta$, then $Ht\cap Ht^\prime=\emptyset$ for every $t\neq t^\prime \in T$;
    \item  $T_\eta$ contains large squares for every $\eta<\kappa$;
\item if $T=\bigcup_{\eta<\kappa} T_\eta$ and $U\subset H$ is  compact, then $UT g\cap UT$ and $gUT\cap UT$  are  relatively compact relatively compact for every $g\notin t^{-1}{U^2}t$ with $t\in T$.
\end{enumerate}
\end{corollary}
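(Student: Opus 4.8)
The plan is to recognize that this corollary is nothing but Lemma \ref{kappa} reread with $N_G(H)$ playing the role of the ambient group $G$. The decisive observation is that, by the very definition of the normalizer, $H$ is a \emph{normal} subgroup of $N_G(H)$: indeed $N_G(H)$ is the largest subgroup of $G$ in which $H$ is normal. Since moreover $|N_G(H)\colon H|=\kappa\geq\omega$ by hypothesis, the pair $(N_G(H),H)$ satisfies exactly the algebraic assumptions of Lemma \ref{kappa}. I would therefore simply apply that lemma to $N_G(H)$ and transport the resulting family $\{T_\eta\colon\eta<\kappa\}\subseteq N_G(H)$, together with its three properties, to the statement at hand.

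The one point that deserves care, and which I expect to be the only genuine obstacle, is topological rather than algebraic: Lemma \ref{kappa} is phrased for a \emph{locally compact} ambient group, whereas $N_G(H)$ need not be closed in $G$ and so need not be locally compact. (When $H$ happens to be closed this difficulty evaporates, since then $N_G(H)$ is closed in $G$, hence locally compact, and Lemma \ref{kappa} applies literally.) To handle the general case I would check that the proof of Lemma \ref{kappa} never uses local compactness of the ambient group in an essential way. The entire recursive construction of the sets $C_{\eta,n}$, $D_{\eta,n}$ and $T_\eta$ lives in the discrete quotient $N_G(H)/H$ and rests solely on Lemma \ref{lem:cover} together with the counting of conjugacy classes; it is purely algebraic. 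Properties (i) and (ii) are likewise purely algebraic and so transfer verbatim.

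For property (iii) I would revisit the concluding estimates in the proof of Lemma \ref{kappa}, where the relative compactness of $gUT\cap UT$ and $UTg\cap UT$ is secured by the inclusions into finite unions of one-sided translates of $U D_{\eta_2,n_2}C_{\eta_2,n_2}$. Here $U$ is compact and $D_{\eta_2,n_2}$, $C_{\eta_2,n_2}$ are finite, so this product is compact \emph{in $G$}, and the two intersections are relatively compact in $G$. Since compactness of products and of translates of compact sets holds in any topological group, none of this appeals to local compactness of $N_G(H)$, and relative compactness may be understood throughout in the ambient locally compact group $G$. This closes the only gap, and the corollary follows.
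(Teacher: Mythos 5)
Your overall route is exactly the paper's: since $H$ is normal in $N_G(H)$ and $\left|N_G(H)\colon H\right|=\kappa\geq\omega$, one applies Lemma \ref{kappa} with $N_G(H)$ as ambient group, and properties (i) and (ii) transfer verbatim. There is, however, one genuine omission in your treatment of (iii). The corollary asserts relative compactness of $gUT\cap UT$ and $UTg\cap UT$ for every admissible $g$ in the ambient group $G$, whereas Lemma \ref{kappa} applied to $N_G(H)$ only quantifies $g$ over $N_G(H)$. Your inspection of the concluding estimates of the lemma's proof addresses \emph{where} relative compactness is measured, not \emph{which} $g$ are covered. The missing step, which is precisely the one-line observation the paper's proof supplies, is that both intersections are empty unless $g\in N_G(H)$: if, say, $u_1t_1g=u_2t_2$ with $u_1,u_2\in U\subseteq H$ and $t_1,t_2\in T\subseteq N_G(H)$, then $g=t_1^{-1}u_1^{-1}u_2t_2\in N_G(H)$, and similarly $gu_1t_1=u_2t_2$ gives $g=u_2t_2t_1^{-1}u_1^{-1}\in N_G(H)$. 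With this remark the case $g\notin N_G(H)$ is trivial and the lemma disposes of the rest; without it, your claim that the three properties ``transfer verbatim'' is not literally true for (iii).

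On the other hand, your concern about local compactness of $N_G(H)$ --- which the paper passes over in silence --- is legitimate, since the corollary assumes only that $H$ is a subgroup (not closed), so $N_G(H)$ need not be closed in $G$. Your resolution is correct: the recursive construction in Lemma \ref{kappa} takes place in the abstract quotient $N_G(H)/H$ and rests only on Lemma \ref{lem:cover} and the counting of conjugacy classes, while the compactness assertions in (iii) involve only products and translates of the compact set $U$ by the finite sets $C_{\eta_2,n_2}$, $D_{\eta_2,n_2}$, hence hold relative to $G$ in any topological group. So, modulo the quantifier point above, your proof is complete and on this score slightly more careful than the paper's own.
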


\begin{proof}
  Since $H$ is a   normal subgroup of $N_G(H)$,  we can apply Lemma  \ref{kappa} to $N_G(H)$.  Statements  (i) and (ii) remain the same if $N_G(H)$ is replaced by $G$. As for Statement (iii), one notices that  $gUT\cap UT\neq \emptyset$ and  $UTg\cap UT\neq \emptyset$ both  imply that $g\in N_G(H)$, and so this statement follows also from Lemma  \ref{kappa}.
\end{proof}

We obtain a first consequence for groups with large center.

 \begin{theorem}
  \label{center}
 Let $G$ be  a locally compact group,   $Z(G)$ be the algebraic center of $G$ and put $\kappa=\kappa(Z(G))$. If $\kappa\neq 1$, then  there is always a linear isometry
 $\Psi \colon \ell_\infty(\kappa)\to \wap(G)/\B(G)$.
\end{theorem}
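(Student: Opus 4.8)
The plan is to apply Theorem \ref{main:constr} with $\A_1(G)=\B(G)$ and $\A_2(G)=\wap(G)$. These satisfy the standing hypotheses of that theorem: by the inclusion diagram $\B(G)\subset\wap(G)\subseteq\luc(G)$, both are unital, and $\wap(G)$ is admissible. Everything therefore reduces to producing, for $\kappa=\kappa(Z(G))$, a disjoint family $\{T_\eta:\eta<\kappa\}$ of subsets of $G$ such that each $T_\eta$ fails to be a $\B(G)$-interpolation set, and $T=\bigcup_\eta T_\eta$ is a right $U$-uniformly discrete, approximable $\wap(G)$-interpolation set for some compact neighbourhood $U$ of $e$. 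The whole point is to build this family \emph{inside the centre} $Z=Z(G)$, because centrality trivialises conjugation and makes the relevant $t$-set arguments available.

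First I would record two consequences of $\kappa\neq 1$, i.e.\ of $Z$ being non-compact. On one hand, any infinite uniformly discrete $T\subseteq Z$ is an $E$-set and $G$ is an $E$-group: for central $x$ one has $x^{-1}Ux=U$, so $\bigcap_{x\in T\cup T^{-1}}x^{-1}Ux=U$ is a neighbourhood. On the other hand $\kappa\geq\omega$. Next I would extract from $Z$, using the structure theory of locally compact abelian groups, a closed subgroup $C\subseteq Z$ together with an open subgroup $H\leq C$ for which $|C:H|=\kappa(C)=\kappa$: when $Z$ is not $\sigma$-compact one takes $C=Z$ and $H$ a compactly generated open subgroup; when $Z$ is $\sigma$-compact (so $\kappa=\omega$) one takes either $C=Z$ with $H$ a compact open subgroup of infinite index, or a closed copy $C\cong\Z$ with $H=\{e\}$.

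The combinatorial core is then supplied by Lemma \ref{kappa} applied to the abelian pair $H\trianglelefteq C$: it yields disjoint sets $\{T_\eta:\eta<\kappa\}\subseteq C$ with $Ht\cap Ht'=\emptyset$ for $t\neq t'\in T$, with each $T_\eta$ containing large squares, and with $UTg\cap UT$ and $gUT\cap UT$ relatively compact for $g\notin t^{-1}U^2t$, where $U\subseteq H$ is compact. Since $T\subseteq C\subseteq Z$ is central, $t^{-1}U^2t=U^2$, so $UT$ is a two-sided $t$-set in $C$. From here I would assemble the three requirements. Each $T_\eta$ contains large squares, hence is not a $\B(G)$-interpolation set by Theorem \ref{B(G)}. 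The openness of $H$ in $C$ together with $Ht\cap Ht'=\emptyset$ gives, after choosing a neighbourhood $W$ of $e$ in $G$ with $W^{-1}W\cap C\subseteq H$, that $T$ is right $W$-uniformly discrete in $G$. Finally, since $C$ is closed and $T$ is central, Lemma \ref{lem:liftTC} lifts the $t$-set $UT$ of $C$ to a two-sided $t$-set $VT$ in $G$; a $t$-set is translation-compact, so by Lemma \ref{lem:lucint}(iv) the $E$-set $T$ is an approximable $\wap_0(G)$- and hence $\wap(G)$-interpolation set. Theorem \ref{main:constr} then delivers the linear isometry $\Psi\colon\ell_\infty(\kappa)\to\wap(G)/\B(G)$.

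The main obstacle is the construction of the family itself, which is essentially the content of Lemma \ref{kappa}: one must simultaneously produce large squares inside each piece while keeping the global union sparse enough (the $t$-set condition) and coset-separated. Beyond citing that lemma, the extra work specific to this theorem is the bookkeeping needed to realise the index $\kappa(Z(G))$ by a well-chosen closed central subgroup $C$ and open $H\leq C$, and the verification that both uniform discreteness and the $t$-set property survive the passage $C\hookrightarrow G$ through Lemma \ref{lem:liftTC}.
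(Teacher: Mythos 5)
Your proposal is correct and follows essentially the same route as the paper's own proof: extract central subgroups $H\subseteq C\subseteq Z(G)$ with $H$ open in $C$ and $|C:H|=\kappa$, apply Lemma \ref{kappa} to this abelian pair, rule out $\B(G)$-interpolation for each $T_\eta$ via large squares and Theorem \ref{B(G)}, use centrality and Lemma \ref{lem:liftTC} to lift the $t$-set $UT$ to $G$, get approximable $\wap(G)$-interpolation from Lemma \ref{lem:lucint}, and finish with Theorem \ref{main:constr}. The only (cosmetic) divergence is your case analysis on $\sigma$-compactness of $Z(G)$ (compactly generated open subgroup, compact open subgroup, or a closed copy of $\Z$ with $H=\{e\}$), whereas the paper splits on whether the open subgroup $G_0\cong\R^n\times K$ of $Z(G)$ has infinite index, taking $H_2=\Z$, $H_1=\{e\}$ otherwise; both reductions produce the same subgroup configurations.
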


\begin{proof}
The center $Z(G)$, as every locally compact Abelian group, always contains an open subgroup $G_0$ topologically isomorphic to $\R^n\times K$, with $K$ compact.
Therefore, $G$ must contain   two subgroups $H_1\subset H_2\subseteq Z(G)$ with $H_1$ open in $H_2$
and $|H_2\colon H_1|=\kappa$. Indeed, if $|Z(G)\colon G_0|\geq \omega$, then  we take $H_2=Z(G)$ and $H_1=G_0$. If $G_0$ has finite index and $Z(G)$ is not compact, then $\kappa =\omega$ and so we may take $H_2=\Z$
and $H_1=\{e\}$.

 Let $\{T_\eta:\eta<\kappa\}$  be the family of subsets of $H_2$ provided by Lemma \ref{kappa}.   By Theorem \ref{B(G)}, none  of them   is  a $\B(G)$-interpolation set. If $T=\bigcup_{\eta<\kappa}
T_\eta$ and $U$ is a compact neighbourhood of the identity in $H_2$ with $U\subset H_1$, then, since $H_2$ is commutative,  $UT$ is a t-set in $H_2$. By Lemma \ref{lem:liftTC}, we can find a compact neighbourhood $V$ of the identity  in $G$, such that $VT$ is a t-set in $G$.  If $V$ is chosen so that $V^4\cap H_2\subset H_1$ (remember $H_1$ is open in $H_2$), then  $T$ is $V^2$-uniformly discrete, and by Lemma \ref{lem:lucint},  $T$   is an approximable $\wap(G)$-interpolation set.

It suffices now to apply Theorem \ref{main:constr}.
\end{proof}

Theorem \ref{kappa} can be readily applied to discrete groups.
To further expand its applicability we follow the usual path applying well-known structure theorems. The following Lemma for instance is the analog of  Lemma 4.4 of \cite{chou82}.

\begin{lemma}\label{quotquot}
  Let $G$ be a locally compact group and let $N$ be a closed  subgroup of $G$.
   \begin{enumerate}
   \item If $N$ is normal, the quotient map $\pi\colon G\to G/N$ induces linear isometries \begin{align*}&\Pi \colon \wap(G/N)/\B(G/N)\to \wap(G)/\B(G)\quad\text{and}\\&
    \Pi_0 \colon \wap_0(G/N)/\B_0(G/N)\to \wap_0(G)/\B_0(G).\end{align*}
   \item If $N$ is open, there are  linear isometries \begin{align*}&\Psi\colon \wap(N)/\B(N)\to \wap(G)/\B(G)\quad\text{and}\\&
   \Psi_0\colon \wap_0(N)/\B_0(N)\to \wap_0(G)/\B_0(G).\end{align*}
   \end{enumerate}
  \end{lemma}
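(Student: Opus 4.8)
The plan is to realise both statements through natural maps between the function spaces and to reduce everything to a single isometry claim, the only non-formal ingredient being one averaging argument.

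For part (i) I would use the pullback $\pi^\ast f=f\circ\pi$. Since $\pi$ is a continuous surjective homomorphism, $\pi^\ast$ carries $\wap(G/N)$ into $\wap(G)$ and $\B(G/N)$ into $\B(G)$ (coefficient functions pull back to coefficient functions, and uniform closures are respected because $\pi^\ast$ is an isometry onto its range), so it induces a well-defined linear contraction $\Pi\colon\wap(G/N)/\B(G/N)\to\wap(G)/\B(G)$. The inequality $\|\Pi[f]\|\le\|[f]\|$ is immediate, since any approximant $\phi\in\B(G/N)$ gives $\pi^\ast\phi\in\B(G)$ with $\|\pi^\ast f-\pi^\ast\phi\|_\infty=\|f-\phi\|_\infty$. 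The whole difficulty is the reverse inequality, for which I would build a norm-one projection $P$ of $\wap(G)$ onto its subspace of $N$-invariant functions. To construct $P$, recall that $\wap(N)$ carries a unique invariant mean $m_N$ for \emph{every} locally compact $N$. For $f\in\wap(G)$ the function $n\mapsto f(nt)$ is the restriction to $N$ of a right translate of $f$, hence lies in $\wap(N)$, and I set $Pf(t)=m_N\bigl(n\mapsto f(nt)\bigr)$. Then $P$ is linear with $\|Pf\|_\infty\le\|f\|_\infty$; left-invariance of $m_N$ makes $Pf$ left $N$-invariant, and normality of $N$ turns this into constancy on cosets, so $Pf=\pi^\ast g$ for a unique $g\in\wap(G/N)$. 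On functions already of the form $\pi^\ast g$ one has $Pf=f$, so $P$ is a projection onto $\pi^\ast\wap(G/N)$.

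The key step, and the one I expect to be the main obstacle, is that $P$ maps $\B(G)$ into $\pi^\ast\B(G/N)$. I would prove this first on $B(G)$ by a mean-ergodic computation: if $u(g)=\langle\sigma(g)\xi,\eta\rangle$ is a coefficient of a continuous unitary representation $\sigma$ and $P_\sigma$ is the orthogonal projection onto the $N$-fixed vectors, then, because bounded orbits in Hilbert space are relatively weakly compact, the ergodic splitting $H=\mathrm{Fix}\oplus\overline{\mathrm{span}}\{\zeta-\sigma(n)\zeta\}$ together with two-sided invariance of $m_N$ yields $m_N\bigl(n\mapsto\langle\sigma(n)\zeta,\eta\rangle\bigr)=\langle P_\sigma\zeta,\eta\rangle$. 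Since $N$ is normal, $\sigma(g)$ preserves the fixed subspace, so $P_\sigma$ commutes with $\sigma(g)$ and $Pu(g)=\langle\sigma(g)P_\sigma\xi,P_\sigma\eta\rangle$ is a coefficient of a representation trivial on $N$, i.e.\ $Pu\in\pi^\ast B(G/N)$. As $P$ is continuous and $\B(G)=\overline{B(G)}^{\|\cdot\|_\infty}$, this passes to the closure and gives $P(\B(G))\subseteq\pi^\ast\B(G/N)$. The reverse inequality now follows at once: for $\psi\in\B(G)$ write $P\psi=\pi^\ast\phi$ with $\phi\in\B(G/N)$; then $\|f-\phi\|_\infty=\|\pi^\ast f-P\psi\|_\infty=\|P(\pi^\ast f-\psi)\|_\infty\le\|\pi^\ast f-\psi\|_\infty$, and taking infima shows $\|[f]\|\le\|\Pi[f]\|$, so $\Pi$ is isometric. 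The map $\Pi_0$ is handled identically once one notes that $\pi^\ast$ and $P$ commute with the invariant mean $\mu$ (because $\pi$ is onto), hence preserve the splittings $\wap(G)=\ap(G)\oplus\wap_0(G)$ and $\B(G)=\ap(G)\oplus\B_0(G)$.

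For part (ii), with $N$ open I would instead use extension by zero $E\colon f\mapsto\widetilde f$, where $\widetilde f=f$ on $N$ and $\widetilde f=0$ off $N$; openness of $N$ keeps $\widetilde f$ continuous, and the standard fact that for open subgroups extension by zero embeds $B(N)$ isometrically into $B(G)$ (note $1_N\in B(G)$, being the coefficient $\langle\lambda(g)\delta_{\bar e},\delta_{\bar e}\rangle$ of the quasi-regular representation on $\ell^2(G/N)$) gives $E(\wap(N))\subseteq\wap(G)$ and $E(\B(N))\subseteq\B(G)$, hence an induced map $\Psi$ (and, on the mean-zero parts, $\Psi_0$). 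Here the isometry is easier and needs no averaging: the retraction is ordinary restriction to $N$, which sends $\B(G)$ into $\B(N)$ and does not increase sup-norms, so for $\psi\in\B(G)$ one has $\|\widetilde f-\psi\|_\infty\ge\|f-\psi|_N\|_\infty\ge\inf_{\phi\in\B(N)}\|f-\phi\|_\infty$, yielding the reverse inequality, while the forward inequality is again immediate. The passage to $\Psi_0$ follows from the same compatibility with the invariant mean as in part (i).
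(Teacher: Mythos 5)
Your part (i) is correct and is essentially the paper's own proof. The paper uses the same pullback $\tilde\pi(\phi)=\phi\circ\pi$ and exactly your averaging projection, in the form $\phi^N(g)=\mu_N(\phi_g)$, and it obtains the two facts you prove by hand from the literature: Chou's theorem that $\tilde\pi(\B(G/N))=\tilde\pi(\CB(G/N))\cap\B(G)$, and his Lemma~2.3 argument that $\phi^N\in\wap(G/N)$, with the positive-definite case giving $\psi^N\in\B(G/N)$. Your mean-ergodic computation with the orthogonal projection $P_\sigma$ onto the $N$-fixed vectors, plus the observation that normality makes $P_\sigma$ commute with $\sigma(g)$, is a correct self-contained substitute for those citations, and your final infimum computation is the paper's. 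Two small debts in part (i): you assert rather than prove that $Pf$ lands in $\pi^\ast\wap(G/N)$ (the paper cites Chou for this), and the identity $\mu\circ P=\mu$ needed for $\Pi_0$ deserves a line ($P$ commutes with right translations, so $\mu\circ P$ is a right-invariant mean on $\wap(G)$, hence equals $\mu$ by uniqueness); note also that $\wap_0(G)$ is defined by $\mu(|f|)=0$, not $\mu(f)=0$, so you need positivity of the mean together with $|P\psi|\leq P|\psi|$ — this does work. Likewise your $\Psi$ in part (ii) is the paper's map; be aware, though, that $E(\wap(N))\subseteq\wap(G)$ does \emph{not} follow from the fact that $B(N)$ extends to $B(G)$, as your sentence suggests: it is a separate theorem (de~Leeuw--Glicksberg, Burckel, Chou), which the paper cites.

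The genuine gap is $\Psi_0$. Your closing sentence claims it "follows from the same compatibility with the invariant mean as in part (i)", but for the restriction retraction that compatibility is \emph{false}: restriction to $N$ does not map $\B_0(G)$ into $\B_0(N)$. Concretely, if $[G:N]$ is infinite, then $1_N\in B(G)$ and $\mu_G(1_N)=0$ (average $m$ translates $1_{s_i^{-1}N}$ with disjoint supports and use invariance to get $\mu_G(1_N)\leq 1/m$), so $1_N\in\B_0(G)$, while $1_N|_N\equiv 1$ has mean one on $N$. Worse, the same estimate shows $E\phi\in\B_0(G)$ for \emph{every} $\phi\in\B(N)$, mean-zero or not, when the index is infinite. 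Consequently your chain $\|\widetilde f-\psi\|_\infty\geq\|f-\psi|_N\|_N$ with $\psi\in\B_0(G)$ only bounds the quotient norm of $\Psi_0(f+\B_0(N))$ from below by the distance of $f$ to all of $\B(N)$, not to $\B_0(N)$; the $\ap(N)$-component of $\psi|_N$ remains uncontrolled. The positivity trick that rescues the analogous step in part (i) (namely $|P\psi|\leq P|\psi|$) works in the wrong direction for restriction, so the reverse inequality for $\Psi_0$ is unproved as written and requires a different argument — the paper itself does not route $\Psi_0$ through restriction but verifies the isometry for the zero-extension map directly.
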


  \begin{proof}
We first prove (i).
  The map $\phi \mapsto \phi \circ \pi$ clearly defines a linear isometry  $\tilde{\pi}\colon \wap(G/N)\to \wap(G)$.
  By \cite[Theorem]{chou79}, we have \[\tilde{\pi}(\B(G/N))=\tilde{\pi}(\CB(G/N))\cap \B(G).\]
  By \cite{Bu}, we have \[\tilde{\pi}(\wap(G/N))=\tilde{\pi}(\CB(G/N))\cap \wap(G).\] Since $\B(G)\subseteq\wap(G)$, we see that
  \begin{align}\label{star}\tilde{\pi}(\B(G/N))=\tilde{\pi}(\CB(G/N))\cap \wap(G)\cap\B(G)=\tilde{\pi}(\wap(G/N))\cap \B(G)\end{align}
   so that  $\tilde{\pi}$ induces  a linear isomorphism
\[ \Pi \colon \wap(G/N)/\B(G/N)\to \wap(G)/\B(G),\]
given by \[\Pi(\phi+\B(G/N))=\tilde{\pi}(\phi)+\B(G).\]
We check that $\Pi$ is an isometry.
If $\phi \in \wap(G/N)$,
\begin{align*}
\|\Pi(\phi+\B(G/N))\|&=\|\tilde{\pi}(\phi)+\B(G)\|\\
& =\inf\{\|\tilde{\pi}(\phi)+\psi\| \colon \psi \in \B(G)\}\\
&\leq  \inf\{\|\tilde{\pi}(\phi+\psi)\| \colon \psi \in \B(G/N)\}
\\
&=  \inf\{\|\phi+\psi\| \colon \psi \in \B(G/N)\} =\|\phi+\B(G/N)\|.
\end{align*}
For the reverse inequality,  we follow the path of Lemma 2.3 of \cite{chou80}
and consider  the invariant mean $\mu_N$ on $\wap(N)$.
For $\phi\in \wap(G)$, we define the function $\phi^N\colon G\to\C$ by
\[\phi^N(g)=\mu_N(\phi_g), \quad \mbox{where } \phi_g(h)=\phi(gh).\]
By invariance of $\mu_N$, the function $\phi^N$ is constant on the cosets of $N$ and therefore induces a continuous function on $G/N$. Clearly, $\|\phi^N\|_{G/N}\leq \|\phi\|_G$.

Now Lemma 2.3 of \cite{chou80} proves in fact that  $\phi^N\in \wap(G/N)$.  Moreover, by first considering positive-definite functions, it
 is also easily checked that  $\psi^N \in \B(G/N)$ for every   $\psi\in \B(G)$.

Note as well that
for $\phi \in \wap(G/N)$, we have $\tilde{\pi}(\phi)^N=\phi$.

Now if $\phi \in \wap(G/N)$ and $\psi \in \B(G)$,
\begin{align*}
\| \tilde{\pi}(\phi)+\psi\|
&\geq \|(\tilde{\pi}(\phi)+\psi)^N\|\\
&=\|\tilde{\pi}(\phi)^N+\psi^N\|\\
&=\|\phi+\psi^N\|\\
&\geq \|\phi+\B(G/N)\|.
\end{align*}
And the remaining inequality  \[\|\Pi(\phi+\B(G/N))\|=\|\tilde{\pi}(\phi)+\B(G)\|\geq \|\phi+\B(G/N)\|\] follows. \medskip

We prove now the analogue statements for $\wap_0(G)$ and $\B_0(G).$
We check first that $\tilde{\pi}$ maps $\wap_0(G/N)$ into $\wap_0(G)$ and $\B_0(G/N)$ into $\B_0(G)$.
Consider the adjoint of $\tilde\pi,$ this is the map given by \[\tilde\pi^*:\wap(G)^*\to\wap(G/N)^*,\quad \tilde\pi^*(\nu)=\nu\circ\tilde\pi.\]
Note that if $\mu\in\wap(G)^*$ is invariant then $\tilde\pi^*(\mu)\in\wap(G/N)^*$ is invariant.
To see this, let $\bar s=\pi(s)\in G/N$ and $f\in\wap(G/N)$ and note that $\tilde\pi(f_{\bar s})=(\tilde\pi(f))_s,$ and so
\[\tilde\pi^*(\mu)(f_{\bar s})=\mu(\tilde\pi(f_{\bar s})=\mu((\tilde\pi(f))_s)=\mu(\tilde\pi(f))=\tilde\pi^*(\mu)(f).\]
Thus, $\tilde\pi^*(\mu)$ is the invaraint on $\wap(G/N).$

Let now $f\in \wap_0(G/N)$ and $\mu$ be the invariant mean on $\wap(G)$. Then $\tilde\pi(f)\in \wap(G)$ and
\[\mu(|\tilde\pi(f)|)=\mu(|f\circ\pi|)=\mu(|f|\circ \pi)=\tilde\pi^*(\mu)(|f|)=0.\] Thus, $\tilde\pi(f)\in\wap_0(G).$
To see that $\tilde\pi(f)\in \B_0(G)$
when $f\in \B_0(G/N)$, we argue in a similar way using the fact that $\tilde\pi(f)\in \B(G)$ by \cite[Theorem]{chou80}.
Accordingly, \[
\tilde\pi(\B_0(G/N))\subseteq \tilde\pi(\wap_0(G/N))\cap \B_0(G).\]

The reverse inclusion is checked as follows. If $f\in \tilde\pi(\wap_0(G/N))\cap \B_0(G),$ then by (\ref{star}) $f$ is clearly in $\tilde\pi(\B(G/N)).$
So let $g\in\B(G/N)$ with $f=\tilde\pi(g).$
We only need to make sure that $\tilde\pi^*(\mu)(|g|)=0.$
But this is also clear from the following identity.
\[\tilde\pi^*(\mu)(|g|=\mu(\tilde\pi(|g|))=\mu(|g|\circ\pi)=\mu(|g\circ\pi|)=\mu(|\tilde\pi(g)|)=\mu(|f|).\]
Thus, we obtain the analogue of (\ref{star})
  \begin{align}\label{starstar}\tilde{\pi}(\B_0(G/N))=\tilde{\pi}(\wap_0(G/N))\cap \B_0(G)\end{align}
   so that  $\tilde{\pi}$ induces  a linear isomorphism
\[ \Pi_0 \colon \wap_0(G/N)/\B_0(G/N)\to \wap_0(G)/\B_0(G),\]
given by \[\Pi_0(\phi+\B_0(G/N))=\tilde{\pi}(\phi)+\B_0(G).\]
To check that $\Pi_0$ is an isometry, we proceed precisely as for $\Pi$.

For the proof of (ii), we associate  to each $\phi\in \wap(N)$ the function \begin{equation}\label{ext}\phi_N(g)=\phi(g)\quad\text{ if}\quad g\in N\quad\text{ and} \quad
\phi_N(g)=0\quad\text{ if}\quad g\notin N.\end{equation}
Then  $\phi_N$ is in $\wap(G)$ by \cite[Lemma 5.4]{deL2},
\cite[Theorem 3.14]{Bu} or \cite[Lemma 2.4]{chou75}. If $\phi$ happens to be in $B(N)$  then $\phi_N\in B(G)$
by  \cite[page 280]{HR} or \cite[Lemma 4.1]{chou82} and this obviously extends to $\B(N)$ and $\B(G)$.

Define then
$\Psi\colon \wap(N)/\B(N)\to \wap(G)/\B(G)$ by \[\Psi(\phi+\B(N))=\phi_N+\B(G).\]
It is easy to check that $\Psi$ is a linear isometry.

To prove the second  statement of (ii), note that  the extension of $\phi_N$ defined in (\ref{ext}) is clearly in $\wap_0(G)$
 if $\phi\in \wap_0(N)$, and in $\B_0(G)$ if $\phi\in \B_0(N)$.
It is again straightforward to verify that
\[\Psi_0\colon \wap_0(N)/\B_0(N)\to \wap_0(G)/\B_0(G),\quad \Psi_0(\phi+\B_0(N))=\phi_N+\B_0(G)\]
is the required linear isometry.
 \end{proof}

 We reach finally our main results.

 \begin{theorem}\label{theorem:inquot} Let G be a
 a non-compact, locally compact, $IN$-group and put
$\kappa=\kappa(G)$.  Then there is a linear isometry $\Psi\colon \ell_\infty(\kappa)\to
\wap(G)/\B(G)$.
\end{theorem}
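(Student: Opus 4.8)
The plan is to realise the theorem as an instance of Theorem \ref{main:constr} with $\A_1(G)=\B(G)$ and $\A_2(G)=\wap(G)$, exactly in the spirit of Theorem \ref{center}, but arranging the family $\{T_\eta\colon\eta<\kappa\}$ so that the index feeding Lemma \ref{kappa} is the \emph{full} compact covering number $\kappa=\kappa(G)$ rather than $\kappa(Z(G))$. The decisive feature of an $IN$-group is that it possesses a compact invariant neighbourhood $V_0$ of $e$; the point of invariance is that if the set $U\subseteq H$ appearing in Lemma \ref{kappa} is invariant, then $t^{-1}U^2t=U^2$ for every $t$, so the exceptional set in Lemma \ref{kappa}(iii) collapses to the single compact set $U^2$ and $UT$ becomes a genuine two-sided $t$-set in $G$.

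The construction I would use is as follows. Let $V_0$ be a compact invariant neighbourhood of $e$ and let $H_0=\langle V_0\rangle$ be the open, normal, $\sigma$-compact subgroup it generates; since each power $V_0^{\,n}$ has relatively compact conjugacy class, every element of $H_0$ does too. Because $H_0$ is open, $\kappa(G)=|G\colon H_0|\cdot\kappa(H_0)$ and $\kappa(H_0)\le\omega$. The main (and new) case is $|G\colon H_0|\ge\omega$, where $\kappa(G)=|G\colon H_0|$: here I apply Lemma \ref{kappa} to the normal subgroup $H=H_0$, obtaining sets $T_\eta$ ($\eta<\kappa$) lying in distinct $H_0$-cosets, each containing large squares, and with the translation estimate of Lemma \ref{kappa}(iii) relative to the compact set $U=V_0\subseteq H_0$.

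With these data the three hypotheses of Theorem \ref{main:constr} are readily verified. Since $U=V_0\subseteq H$ and the $H_0$-cosets are disjoint, $T=\bigcup_\eta T_\eta$ is right $U^2$-uniformly discrete; since $V_0$ is invariant, $t^{-1}U^2t=U^2$ and Lemma \ref{kappa}(iii) shows $UT$ is a $t$-set, hence translation-compact. The invariant neighbourhoods being fixed under conjugation make $G$ an $E$-group with $T$ an $E$-set, so Lemma \ref{lem:lucint} identifies $T$ as an approximable $\wap(G)$-interpolation set; meanwhile each $T_\eta$ contains large squares, so by Theorem \ref{B(G)} none is a $\B(G)$-interpolation set. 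As $\B(G)\subset\wap(G)\subseteq\luc(G)$ with $\wap(G)$ admissible, Theorem \ref{main:constr} delivers the linear isometry $\Psi\colon\ell_\infty(\kappa)\to\wap(G)/\B(G)$.

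It remains to dispose of the degenerate case $|G\colon H_0|<\omega$, which forces $H_0$ non-compact and $\kappa(G)=\omega$. Passing to the open finite-index subgroup $H_0$ by Lemma \ref{quotquot}(ii), one is reduced to the compactly generated group $H_0=\langle V_0\rangle$, all of whose conjugacy classes are relatively compact; the structure theory of such groups gives that $H_0/Z(H_0)$ is compact, whence $\kappa(Z(H_0))=\kappa(H_0)=\omega$ and Theorem \ref{center} finishes the argument. I expect the genuine obstacle to lie precisely in this structural bookkeeping: one must guarantee that $\kappa(G)$ is realised either as the index $|G\colon H_0|$ of a normal subgroup carrying an invariant compact neighbourhood, or, in the remaining case, as $\kappa(Z(G))$ so that Theorem \ref{center} applies, and verify that these two alternatives exhaust all non-compact $IN$-groups.
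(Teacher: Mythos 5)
There is a genuine gap, and it sits exactly at the sentence ``The invariant neighbourhoods being fixed under conjugation make $G$ an $E$-group with $T$ an $E$-set.'' An $IN$-group supplies \emph{one} compact invariant neighbourhood $V_0$, whereas the $E$-set condition quantifies over \emph{all} neighbourhoods $U$ of $e$: you need $\bigcap\{t^{-1}Ut\colon t\in T\cup T^{-1}\}$ to be a neighbourhood for every $U$, and invariance of $V_0$ only gives this for the single neighbourhood $V_0$ and its powers. That is the SIN property, not the IN property, and your $T$ is a transversal of $H_0$-cosets, hence made of essentially arbitrary (non-central) elements of $G$. Concretely, take $G=\mathbb{T}^2\rtimes_A\Z$ with $A\in GL_2(\Z)$ hyperbolic: this is an $IN$-group with $V_0=H_0=\mathbb{T}^2$ and $\kappa(G)=|G\colon H_0|=\omega$, but any transversal $T$ of $H_0$ contains elements $t_k=(z_k,n_k)$ with $|n_k|\to\infty$, and since conjugation by $t_k$ acts on $\mathbb{T}^2$ through $A^{n_k}$, the set $\bigcap_k t_k^{-1}Ut_k$ is not a neighbourhood of $e$ for small $U$. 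So $T$ is not an $E$-set, Lemma \ref{lem:lucint}(iv) does not apply, and your main case never certifies $T$ as an approximable $\wap(G)$-interpolation set; Theorem \ref{main:constr} therefore cannot be invoked. (In Theorem \ref{center} the same chain works only because there $T$ is \emph{central}, so $t^{-1}Ut=U$ for every $U$.)

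The paper's proof avoids this obstacle by never constructing the interpolation set inside $G$ in the high-index case. It takes from \cite[Theorem 2.13]{grossmosk71} an open normal subgroup $N$ containing a compact normal $K$ with $N/K$ Abelian; if $\kappa(N)=\kappa$ it applies Theorem \ref{center} to $N/K$ and lifts through Lemma \ref{quotquot}(i) and (ii), and if $\kappa=|G\colon N|$ it runs Lemma \ref{kappa} in the \emph{discrete} group $G/N$ (where the $E$-set condition is vacuous and $U=\{e\}$, so $T$ really is a $t$-set and an approximable $\wap(G/N)$-interpolation set), obtains the copy of $\ell_\infty(\kappa)$ in $\wap(G/N)/\B(G/N)$ by Theorem \ref{main:constr}, and then transports it back through the isometry $\Pi$ of Lemma \ref{quotquot}(i) — whose proof, via Chou's averaging $\phi\mapsto\phi^N$ by the invariant mean on $\wap(N)$, is precisely the substitute for the interpolation sets that are unavailable in $G$ itself. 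Your skeleton (large squares plus Theorem \ref{B(G)} to kill $\B(G)$-interpolation, Theorem \ref{main:constr} for the isometry) and your degenerate case ($|G\colon H_0|<\omega$, reducing via Grosser--Moskowitz structure theory for compactly generated groups with relatively compact conjugacy classes to Theorem \ref{center} and Lemma \ref{quotquot}(ii)) are sound; what is missing is the passage to the discrete quotient and the pull-back isometry in the main case.
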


\begin{proof}
Let $G_0$ denote the connected component of $G$. By Theorem 2.13 of \cite{grossmosk71},
there is an open normal subgroup $N$ of $G$  that contains a compact normal subgroup $K$ with $N/K$ Abelian.

Suppose first that  $\kappa(N)=\kappa$. Then $\kappa=\kappa(N/K)$. By
Theorem \ref{center},  there is a linear isometric copy of $\ell_\infty(\kappa)$ in $\wap(N/K)/\B(N/K)$.  We apply then  (i) and (ii)  of Lemma \ref{quotquot} to obtain a linear isometric copy of $\ell_\infty(\kappa)$ in $\wap(G)/\B(G)$.

If $\kappa(N)<\kappa$, it follows that $\kappa=|G:N|$.
We apply  Lemma \ref{kappa} to the discrete group $G/N$. Let $\{T_\eta:\eta<\kappa\}$ be the collection of subsets obtained in that Lemma (in this case the subgroup $H$ of that Lemma is trivial)
 and let $T=\bigcup_{\eta<\kappa}T_\eta$.  By (iii) in that Lemma, the set $T$ is a  $t$-set (note that $U=\{e\}$ in this case, hence $Tg\cap T$ and $gT\cap T$ are finite if $g\neq e$) while each of the  sets $T_\eta$ contains large squares. Therefore,
  $T$ is a $\wap(G/N)(G)$-interpolation set by Lemma \ref{lem:lucint}, while   none  of the sets $T_\eta$  is  a $\B(G/N)(G)$-interpolation set by Theorem \ref{B(G)}.

By   Theorem \ref{main:constr} there is  an
    isometric embedding \[\ell_\infty(\kappa)\to \wap(G/N)/\B(G/N).\] Lemma \ref{quotquot} then provides the desired copy of
$\ell_\infty(\kappa) $ in $\wap(G)/\B(G)$.
\end{proof}

Theorem \ref{theorem:inquot} leads naturally also to an improvement  of \cite[Theorem 4.6]{chou82}.

\begin{theorem} \label{cor:nilquot} Let G be a
 a non-compact, locally compact, nilpotent group and put
$\kappa=\kappa(G)$.  Then the quo\-tient
$\wap(G)/\B(G)$ contains a linear isometric copy of
$\ell_\infty(\kappa).$
\end{theorem}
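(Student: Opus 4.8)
The plan is to argue by induction on the nilpotency class $c$ of $G$, with the whole reduction carried by the center $Z(G)$ (a closed, indeed central, normal subgroup), together with Theorem \ref{center} and the lifting isometry of Lemma \ref{quotquot}. For the base case $c=1$ the group $G$ is Abelian, so $Z(G)=G$ and $\kappa(Z(G))=\kappa(G)=\kappa$; since $G$ is non-compact we have $\kappa\geq\omega\neq 1$, and Theorem \ref{center} provides directly a linear isometry $\ell_\infty(\kappa)\to\wap(G)/\B(G)$.

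For the inductive step assume $c>1$ and set $Z=Z(G)$. If $\kappa(Z)=\kappa$, then Theorem \ref{center} again finishes the proof at once. The interesting case is $\kappa(Z)<\kappa$, where I would pass to the quotient $G/Z$: it is a non-compact, locally compact, nilpotent group of class $c-1$, so the induction hypothesis yields a linear isometric copy of $\ell_\infty(\kappa(G/Z))$ inside $\wap(G/Z)/\B(G/Z)$, and Lemma \ref{quotquot}(i) (with $N=Z$, which is closed and normal) embeds this isometrically into $\wap(G)/\B(G)$. Composing the two linear isometries gives the desired copy, \emph{provided} one knows that $\kappa(G/Z)=\kappa$.

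This last cardinal identity is the only real obstacle, and I expect it to be where the work lies. The point is the multiplicativity of the compact covering number along the extension $Z\to G\to G/Z$. One inequality, $\kappa(G/Z)\le\kappa(G)$, is immediate because $\pi\colon G\to G/Z$ is continuous and carries a compact cover of $G$ to a compact cover of $G/Z$. For the reverse I would invoke the openness of $\pi$ together with the standard compact-lifting property of quotient maps of locally compact groups: every compact subset of $G/Z$ is the image under $\pi$ of a compact subset of $G$. Covering $G/Z$ by $\kappa(G/Z)$ compact sets, lifting each to a compact subset of $G$, and then saturating by a compact cover of $Z$ (note $\kappa(Z)\le\kappa(G)$ since $Z$ is closed), one obtains $\kappa(G)\le\kappa(Z)\cdot\kappa(G/Z)$. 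Since $\kappa\geq\omega$ and $\kappa(Z)<\kappa$, elementary cardinal arithmetic forces $\kappa(G/Z)$ to be infinite and equal to $\kappa$: were it $<\kappa$, the product $\kappa(Z)\cdot\kappa(G/Z)$ would remain below $\kappa$, contradicting $\kappa=\kappa(G)\le\kappa(Z)\cdot\kappa(G/Z)$. In particular $\kappa(G/Z)=\kappa\geq\omega$ also guarantees that $G/Z$ is non-compact, so the induction hypothesis genuinely applies; and since $G/Z$ is nilpotent of class exactly $c-1$, the induction terminates.
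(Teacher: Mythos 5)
Your proof is correct and follows essentially the same route as the paper's: induction on the nilpotency length, with the case $\kappa(Z(G))=\kappa(G)$ dispatched by Theorem \ref{center} and the case $\kappa(Z(G))<\kappa(G)$ reduced to $G/Z(G)$ via Lemma \ref{quotquot}(i). The only difference is that you explicitly justify, by lifting compact sets along the open quotient map and saturating with a compact cover of $Z(G)$, the cardinal identity $\kappa(G)\le\kappa(Z(G))\cdot\kappa(G/Z(G))$, which the paper simply asserts in the form $\kappa(G)=\kappa(Z(G))+\kappa(G/Z(G))$.
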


\begin{proof} The case $\kappa(Z(G))=\kappa(G)$ is already proved in Theorem \ref{center}. So we may assume that $\kappa(Z(G))<\kappa(G)$.
We argue by induction on the length $n$ of the  upper central series of $G$ (the nilpotency length of $G$)
\[\{e\}=G_0\subset G_{1}\subset\ldots \subset G_{n-1}\subset G_n=G\;\text{ with}\;
Z_{i+1}(G)/Z_i(G)=Z(G/Z_i(G)).\]

If $n=1$, then $G$ is Abelian and so Theorem \ref{center} or Theorem \ref{theorem:inquot} applies.

Assume as inductive hypothesis that the claim holds for groups of nilpotency length at most $n-1$ and suppose $G$ has nilpotency length $n$. Since $\kappa(G)=\kappa(Z(G))+\kappa(G/Z(G))$ and the case $\kappa(Z(G))=\kappa(G)$ has already been ruled out, we can assume that $\kappa(G/Z(G))=\kappa(G)$.
    Our inductive hypothesis ($\kappa(G/Z(G))$ has nilpotency length $n-1$) and Lemma \ref{quotquot} then  provide  the desired isometry.
\end{proof}

When $G$ is
an $IN$-group or a nilpotent group, we recover and improve further the results obtained in Section 4.

\begin{corollary}
   Let $G$ be a non-compact $IN$-group or a nilpotent group and let $\kappa$ be the compact covering of $G$.
 Then each of the quo\-tient spaces $\wap(G)/(\ap(G)\oplus C_0(G))$ and $\wap_0(G)/\B_0(G)$
contains a linear isometric copy of $\ell_\infty(\kappa).$
\end{corollary}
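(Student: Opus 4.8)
The plan is to reduce both quotient statements to results already established in this section, chiefly Theorem~\ref{theorem:inquot} and Theorem~\ref{cor:nilquot}. Recall the decompositions $\wap(G)=\ap(G)\oplus\wap_0(G)$ and $\B(G)=\ap(G)\oplus\B_0(G)$ (from the inclusion diagram in the introduction). These give a natural chain of quotient maps that I would exploit to transfer an isometric copy of $\ell_\infty(\kappa)$ from $\wap(G)/\B(G)$ to each of the two target quotients.

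For the quotient $\wap(G)/(\ap(G)\oplus C_0(G))$, I would first note that $\B(G)=\ap(G)\oplus\B_0(G)\supseteq\ap(G)\oplus C_0(G)$, since $C_0(G)\subseteq\B_0(G)$ (indeed $C_0(G)\oplus\ap(G)\subseteq\B(G)$ appears in the diagram and $C_0(G)\subseteq\wap_0(G)$). Thus $\ap(G)\oplus C_0(G)$ is a closed subspace of $\B(G)$, and the identity on $\wap(G)$ induces a norm-decreasing quotient map $q\colon\wap(G)/(\ap(G)\oplus C_0(G))\to\wap(G)/\B(G)$. The subtlety is that pushing an isometric copy \emph{forward} along a quotient map does not preserve isometry. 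So instead I would argue in the reverse direction: take the isometry $\Psi\colon\ell_\infty(\kappa)\to\wap(G)/\B(G)$ from Theorem~\ref{theorem:inquot} (or Theorem~\ref{cor:nilquot}), and trace through its construction. The map $\Psi$ arises from Theorem~\ref{main:constr} via functions $f_{\mathbf c}\in\wap(G)$ supported near a $t$-set $T$; the lower norm estimate there shows $\|f_{\mathbf c}+\phi\|_{T_{\eta_0}}\ge|c_{\eta_0}|$ for \emph{every} $\phi\in\B(G)$. Since $\ap(G)\oplus C_0(G)\subseteq\B(G)$, the same estimate holds a fortiori for every $\phi\in\ap(G)\oplus C_0(G)$, so $\mathbf c\mapsto f_{\mathbf c}+(\ap(G)\oplus C_0(G))$ is already isometric into $\wap(G)/(\ap(G)\oplus C_0(G))$. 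Concretely I would re-invoke Theorem~\ref{main:constr} with $\A_1(G)=\ap(G)\oplus C_0(G)$ and $\A_2(G)=\wap(G)$, checking that each $T_\eta$ (containing large squares, hence failing to be a $\B(G)$-interpolation set by Theorem~\ref{B(G)}, and therefore also failing to be an $\ap(G)\oplus C_0(G)$-interpolation set) fits the hypotheses, while $T$ remains an approximable $\wap(G)$-interpolation set as arranged in the proofs of Theorems~\ref{theorem:inquot} and \ref{cor:nilquot}.

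For the quotient $\wap_0(G)/\B_0(G)$, I would use the same set $T$ and the same family $\{T_\eta\}$, now applying Theorem~\ref{main:constr} with $\A_1(G)=\B_0(G)\oplus\C 1$ and $\A_2(G)=\wap_0(G)\oplus\C 1$ (adjoining constants so that the algebras are unital $C^\ast$-subalgebras as the theorem requires). The $t$-set $T$ constructed from Lemma~\ref{kappa} consists of functions vanishing under the invariant mean, so the relevant interpolating functions $f_{\mathbf c}$ land in $\wap_0(G)$; and each $T_\eta$ failing to be a $\B(G)$-interpolation set forces it to fail as a $\B_0(G)$-interpolation set as well. The lower estimate of Theorem~\ref{main:constr} then yields the isometric copy of $\ell_\infty(\kappa)$ in $\wap_0(G)/\B_0(G)$. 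Here one may alternatively invoke the isometry $\Pi_0$ and $\Psi_0$ from Lemma~\ref{quotquot} to descend to the Abelian or discrete quotient, mirroring exactly the reduction carried out in Theorem~\ref{theorem:inquot}.

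The main obstacle I anticipate is purely bookkeeping rather than conceptual: one must verify that the \emph{same} $t$-set $T$ and partition $\{T_\eta\}$ used to establish $\wap(G)/\B(G)$ simultaneously witness both new quotients, i.e.\ that the interpolation functions genuinely lie in $\wap_0(G)$ (not merely $\wap(G)$) and that the large-square obstruction transfers down from $\B(G)$-interpolation to $\B_0(G)$- and $(\ap(G)\oplus C_0(G))$-interpolation. Since $\ap(G)\oplus C_0(G)$ and $\B_0(G)$ are both sandwiched inside $\B(G)$, every Sidon-type obstruction is inherited, so the lower norm bounds are automatic; the only genuine check is the unitality/admissibility needed to apply Theorem~\ref{main:constr}, handled by adjoining constants. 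With these verifications in place the corollary follows directly from the machinery already assembled.
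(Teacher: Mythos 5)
Your proposal is correct and follows essentially the same route as the paper: the copy in $\wap(G)/(\ap(G)\oplus C_0(G))$ is obtained from Theorems \ref{theorem:inquot} and \ref{cor:nilquot} via the inclusion $\ap(G)\oplus C_0(G)\subseteq\B(G)$ (the a fortiori lower estimate on the explicit lifts $f_{\mathbf c}$), and the copy in $\wap_0(G)/\B_0(G)$ by rerunning those proofs with the same sets $T_\eta$, which fail $\B_0(G)$-interpolation since $\B_0(G)\subseteq\B(G)$, noting $T$ is an approximable $\wap_0(G)$-interpolation set and transferring with $\Pi_0$ and $\Psi_0$ of Lemma \ref{quotquot}. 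Your explicit handling of the lift (rather than pushing the isometry through the quotient map) and the adjunction of constants to make $\wap_0(G)$ and $\B_0(G)$ unital are bookkeeping points the paper glosses over, but they match its intent exactly.
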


\begin{proof}
 That the first quotient contains a copy of $\ell_\infty(\kappa)$ follows directly from
 Theorem \ref{theorem:inquot} and Theorem \ref{cor:nilquot} if we recall the inclusion $\ap(G)\oplus C_0(G)\subseteq
\B(G)$ (see \cite[page 143]{chou82}).

For the second quotient, we argue as in Theorem \ref{theorem:inquot}.
None of the sets $T_\eta$, $\eta<\kappa,$ constructed in all cases needed in the proof of Theorem \ref{theorem:inquot},
is a $\B_0(G)$-interpolation set. On the other hand, proceeding precisely  as in
Theorems \ref{theorem:inquot} and  \ref{cor:nilquot} (and    using  the right  statements of Lemma \ref{quotquot}), we see that  $T=\cup_{\eta<\kappa}T_\eta$ is an approximable $\wap_0(G)$-interpolation set.
\end{proof}

\section{On the quotient of $\CB(G)$ by $\luc(G)$}

When $G$ is non-compact, non-discrete, locally compact group, Dzinotyiweyi showed in \cite{Dz} that the quotient $\CB(G)/\luc(G)$ is non-separable. When  $G$ is a non-precompact, topological group
which is not a P-group, this theorem was generalized and improved in  \cite[Theorem 3.1]{BF1} and \cite[Theorem 4.1]{BF2},
where a linear isometric copy of $\ell_\infty$ was proved to be contained in
$\CB(G)/\luc(G)$. This section is concerned again with locally compact groups. Our theorem is then more precise and definite. We prove,
there is a linear isometric copy of $\ell_\infty(\kappa)$ in $\CB(G)/\luc(G)$, where as before $\kappa$ is
the compact covering $G,$
if and only if $G$ is neither compact nor discrete.

\begin{lemma}\label{lem:seq}
  Every non-discrete locally compact group contains
a faithfully indexed sequence $\{x_n\colon n\in \N\}$ that converges
to the identity.
\end{lemma}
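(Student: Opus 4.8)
The plan is to reduce the statement, via the structure theory of locally compact groups, to situations where a metrizable (hence first countable) piece is visible, and then simply read off a sequence there. First I would record the only soft ingredient: in a non-discrete Hausdorff group every neighbourhood of $e$ is infinite, since a point admitting a finite neighbourhood is isolated and, by homogeneity, one isolated point forces discreteness. In particular, if $G$ happens to be metrizable (equivalently, first countable, by Birkhoff--Kakutani) the result is immediate: $e$ is a non-isolated point of a first countable space and is therefore the limit of a sequence of distinct points. The whole difficulty is thus concentrated in the non-metrizable case, where one cannot just pick a countable neighbourhood base at $e$.

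To handle that case I would invoke the Gleason--Yamabe theorem (the solution of Hilbert's fifth problem): $G$ has an open subgroup $H$ such that, inside any prescribed compact neighbourhood $U$ of $e$, there is a compact normal subgroup $N\subseteq U$ of $H$ with $H/N$ a Lie group. As $H$ is open, it is again non-discrete. This splits the argument in two. If $N$ can be chosen so that $H/N$ is non-discrete, then $H/N$ is a positive-dimensional Lie group, hence locally Euclidean and first countable, so it contains distinct points $\bar y_n\to\bar e$. Here I would lift using a continuous \emph{local cross-section} $s$ of the quotient map $\pi\colon H\to H/N$, defined near $\bar e$ with $s(\bar e)=e$; such a section exists precisely because the base $H/N$ is a Lie group (Montgomery--Zippin). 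Then $x_n:=s(\bar y_n)$ are distinct (a section is injective) and $x_n\to e$ by continuity, producing the required sequence in $H\subseteq G$.

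If instead every admissible $N$ makes $H/N$ discrete, then each such $N$ is a compact \emph{open} subgroup, and taking $U$ compact we obtain an infinite compact open subgroup $K$ of $G$ (infinite by the first paragraph). It then suffices to find a nontrivial convergent sequence inside the infinite compact group $K$. Writing $K$ as a projective limit of its compact Lie quotients $K/N_i$, either some $K/N_i$ is infinite, and I lift a null sequence through that Lie quotient exactly as above; or all $K/N_i$ are finite, so $K$ is profinite. For an infinite profinite group I would instead produce an infinite metrizable closed \emph{subgroup} $L$, using the structure of compact groups (an infinite profinite group contains a closed copy of some $\prod_{n}F_n$ with the $F_n$ finite and nontrivial, or of some $\mathbb{Z}_p$). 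Being infinite, compact, metrizable and non-discrete, $L$ contains distinct $x_n\to e$, and these converge in $G$ as well.

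The hard part is the passage through compact kernels. Lifting a convergent sequence along $\pi\colon H\to H/N$ when $N$ is merely compact is \emph{not} possible in general: the lifts lie in the compact set $\pi^{-1}(\{\bar y_n\}\cup\{\bar e\})$, but without metrizability a sequence in a compact space need not have a convergent subsequence, and choosing coset representatives tending to $e$ requires a countable neighbourhood base one does not have. My plan sidesteps this in two ways: I only lift through \emph{Lie} quotients, where a genuine continuous section is available, and in the totally disconnected case I look for a metrizable closed \emph{subgroup} rather than lifting through a quotient. Establishing that an infinite profinite group always contains an infinite metrizable closed subgroup is the one point at which I would lean on the structure theory of compact groups rather than a bare-hands construction.
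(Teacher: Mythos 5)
Your proposal is correct, but it takes a genuinely different and much heavier route than the paper. The paper's proof is a two-citation affair: every locally compact group $G$ contains a compact subgroup $K$ such that the quotient space $G/K$ is metrizable (Arhangel'skii--Tkachenko, Theorem 4.3.29), and every infinite compact group contains a nontrivial convergent sequence (loc.\ cit., via dyadicity of compact groups); if $K$ is infinite one translates a convergent sequence in $K$ to the identity, and if $K$ is finite then $G$ itself is metrizable and non-discrete, so your first paragraph applies. Your route replaces this with Gleason--Yamabe plus Montgomery--Zippin local cross-sections, which is attractive in that the Lie-quotient cases are handled by an honest lifting argument rather than a black-box citation, and you correctly identify that naive lifting through a merely compact kernel fails without metrizability. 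What you may not realize is how deep your profinite endgame is: the claim that an infinite profinite group contains a closed copy of $\prod_n F_n$ or of $\mathbb{Z}_p$ is true, and elementary when some element $x$ has $\overline{\langle x\rangle}$ infinite (procyclic groups are metrizable quotients of $\widehat{\mathbb{Z}}$), but in the torsion case it rests on Zelmanov's local finiteness theorem for periodic profinite groups together with Hall--Kulatilaka, i.e.\ ultimately on the restricted Burnside problem and Feit--Thompson --- a colossal hammer for this lemma. A soft fix fully in the spirit of your own section trick is available: an infinite profinite group $K$ has a strictly decreasing chain of open normal subgroups $N_1\supsetneq N_2\supsetneq\cdots$, and with $N_\infty=\bigcap_n N_n$ compactness shows $\{N_n/N_\infty\}$ is a countable neighbourhood base at the identity of the infinite quotient $K/N_\infty$, which is therefore metrizable; since every quotient map of profinite groups admits a continuous (non-homomorphic) section taking $\bar e$ to $e$ (Ribes--Zalesskii), you can lift a null sequence exactly as you do through Lie quotients. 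In sum: the paper's argument buys brevity and uniformity at the price of citing dyadicity; your argument buys a constructive picture at the price of Gleason--Yamabe and, as written, an unnecessary appeal to Zelmanov-level structure theory that the sectioning fix removes.
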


\begin{proof}
  A locally compact group always contains a compact subgroup $K$
such that $G/K$ is a metrizable topological space (see \cite[Theorem
4.3.29]{arhatkac}, for instance). Infinite compact groups on the
other hand always contain non-trivial convergent sequences
(\cite[Theorem 4.1.7 and Exercise 4.1.f]{arhatkac}). If $K$ is
infinite we are done. If $K$ is finite, $G$ is non-discrete and
metrizable, it therefore contains non-trivial convergent sequences.
\end{proof}

\begin{theorem}\label{Granirer}
 Let $G$ be a locally compact group.
Then $\CB(G)/\luc(G)$ contains a linear isometric copy of
$\ell_\infty(\kappa(G))$ if and only if $G$ is neither compact nor
discrete.
  \end{theorem}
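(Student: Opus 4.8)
The plan is to prove the two implications separately, with essentially all of the work concentrated in the sufficiency direction.

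For necessity I would argue by contraposition. If $G$ is compact or discrete, then $\CB(G)=\luc(G)$ by the Baker--Butcher theorem \cite{BB}, so the quotient $\CB(G)/\luc(G)$ is the zero space and cannot contain an isometric copy of the nonzero space $\ell_\infty(\kappa(G))$ (recall $\kappa(G)\ge 1$). This settles that direction at once.

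For sufficiency I would set $\A_1(G)=\luc(G)$ and $\A_2(G)=\CB(G)$ and aim to produce a family $\{T_\eta:\eta<\kappa\}$, $\kappa=\kappa(G)$, meeting the hypotheses of Corollary \ref{cor:main:constr}; the theorem then follows from a single invocation of that corollary, which is precisely tailored to the non-admissible algebra $\CB(G)$ (it relies on part (i) of the continuity lemma rather than on membership in an admissible algebra). Concretely I need pairwise disjoint $T_\eta$, each failing to be a $\luc(G)$-interpolation set, whose union $T$ is an approximable $\CB(G)$-interpolation set, together with a compact neighbourhood $U$ of $e$ satisfying $UT_\eta\cap UT_{\eta'}=\emptyset$ for $\eta\neq\eta'$. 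The two standing hypotheses on $G$ enter at exactly two distinct points: non-compactness supplies the room to spread out $\kappa$ many separated blocks, while non-discreteness, via Lemma \ref{lem:seq}, supplies a faithfully indexed sequence $x_n\to e$, $x_n\neq e$, used to destroy right uniform continuity inside each block.

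The construction I have in mind runs as follows. Fix a compact symmetric neighbourhood $V$ of $e$ and choose a maximal right $V^2$-uniformly discrete set $D\subseteq G$; a standard argument (maximality gives $G=V^4D$, while uniform discreteness forces $D$ to meet each compact set in a finite set) yields $|D|=\kappa(G)=\kappa$. Partition $D=\bigsqcup_{\eta<\kappa}D_\eta$ into $\kappa$ countably infinite pieces $D_\eta=\{d_{\eta,k}:k\in\N\}$, which is possible since $\kappa\cdot\omega=\kappa$. After discarding an initial segment I may assume $x_n\in V$ for all $n$, and then for each pair $(\eta,k)$ I pick an index $n_{\eta,k}\ge k$ and set
\[
T_\eta=\{\,d_{\eta,k}:k\in\N\,\}\cup\{\,x_{n_{\eta,k}}d_{\eta,k}:k\in\N\,\}.
\]
Each point of $T_\eta$ lies in $Vd_{\eta,k}$ for the appropriate $k$, so $T_\eta\subseteq VD_\eta$ and $T=\bigcup_\eta T_\eta\subseteq VD$; since the translates $\{Vd:d\in D\}$ are disjoint and locally finite, $T$ is closed and discrete in $G$, hence, $G$ being normal as a locally compact group, an approximable $\CB(G)$-interpolation set by Lemma \ref{lem:lucint}(i). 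Disjointness of the $D_\eta$ together with right $V^2$-uniform discreteness of $D$ gives $VT_\eta\cap VT_{\eta'}\subseteq V^2D_\eta\cap V^2D_{\eta'}=\emptyset$, so the partition condition holds with $U=V$. Finally, each $T_\eta$ fails to be a $\luc(G)$-interpolation set, because the bounded function assigning $0$ to every $d_{\eta,k}$ and $1$ to every $x_{n_{\eta,k}}d_{\eta,k}$ cannot extend to a right uniformly continuous function: one has $d_{\eta,k}\bigl(x_{n_{\eta,k}}d_{\eta,k}\bigr)^{-1}=x_{n_{\eta,k}}^{-1}\to e$, so the prescribed values violate right uniform continuity. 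With all the hypotheses verified, Corollary \ref{cor:main:constr} delivers the desired isometric copy of $\ell_\infty(\kappa)$.

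The main obstacle is the genuine tension built into this construction: to make each block $T_\eta$ fail $\luc$-interpolation I must place points that are arbitrarily close in the right-uniform sense, yet to keep $T$ closed and discrete (so that it is a $\CB(G)$-interpolation set) these same points must march off to infinity, and at the same time the $\kappa$ blocks must remain $V$-separated. The device reconciling all three demands is to create closeness \emph{only} through the ratio $a\,b^{-1}=x_{n}^{-1}\to e$, while the base points $d_{\eta,k}$ range over a uniformly discrete set heading to infinity. Checking that this keeps $T$ locally finite — hence closed and discrete — and keeps the blocks separated is the one point needing genuine care; the remaining bookkeeping (distinctness of the chosen points and the count $|D|=\kappa(G)$) is routine.
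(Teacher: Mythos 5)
Your proposal is correct and follows essentially the same route as the paper: a uniformly discrete set of size $\kappa(G)$ split into $\kappa$ blocks, a null sequence from Lemma \ref{lem:seq} to plant pairs of points whose ratios tend to $e$ (killing $\luc(G)$-interpolation blockwise), closedness and discreteness of the union to get an approximable $\CB(G)$-interpolation set via Lemma \ref{lem:lucint}(i), and a final appeal to Corollary \ref{cor:main:constr}. The only differences are cosmetic: you attach a single perturbed companion $x_{n_{\eta,k}}d_{\eta,k}$ to each base point and use a $\{0,1\}$-valued witness, where the paper attaches the first $n$ terms $s_jx_{\alpha,n}$ and alternates $\pm 1$, and you obtain the size-$\kappa$ uniformly discrete set by a maximality argument rather than the paper's case split between the $\sigma$-compact case and $\kappa>\omega$ via cosets of $\langle U\rangle$.
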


\begin{proof}
The necessity is clear since $\CB(G)=\luc(G)$ if $G$ is either compact or discrete.

If $G$ is not compact we can find  a compact neighbourhood of the
identity $U$ and a $U^2$-right uniformly discrete subset
$X=\{x_\alpha\colon \alpha <\kappa\}\subseteq G$ with
$\kappa=\kappa(G)$. This is clear if $G$ is $\sigma$-compact. If
 $\kappa>\omega$, we consider
  $H=\langle U\rangle$,  the subgroup
  generated by $U$. Then $\kappa=|G:H|$ and any system of representatives of right cosets of $H$ constitutes
   an $H$-right uniformly discrete set of cardinality
  $\kappa$.

  Partition $X$ in $\kappa$-many countable subsets
$X=\bigcup_{\alpha<\kappa} X_\alpha$. Enumerate, for each $\alpha<\kappa,$
$X_\alpha=\{x_{\alpha,n}\colon n<\omega\}$. Since $G$ is not
discrete, $U$ contains (by Lemma \ref{lem:seq}) a faithfully indexed
sequence
 $S=\{s_n \colon n<\omega
\}$ converging to the identity. With these ingredients, we
define \[T_{\alpha,n}=\{s_jx_{\alpha,n}\colon 1\leq j\leq n\},\;
T_\alpha=\bigcup_n T_{\alpha,n}\;\text{ and }\;T=\bigcup_\alpha T_\alpha.\]
Obviously, $UT_\alpha\cap UT_{\alpha^\prime}=\emptyset$ for every $\alpha\neq \alpha^\prime<\kappa$.

Each set $T_\alpha$ fails to be an $\luc (G)$-interpolation
set. Indeed, the function $f\colon T_\alpha \to \C$ such that
\begin{align*}
 f(s_{2j}x_{\alpha,n})&=1\quad\text{ for every}\quad j,n\in \N \quad\text{with}\quad 1\leq 2j\leq
n\text{ and}\\f(s_{2j+1}x_{\alpha,n})&=-1  \quad\text{for every} \quad j,n\in \N \quad\text{ with}\quad
1\leq 2j+1\leq n
\end{align*}
cannot coincide on $T_\alpha$ with any
$\phi \in \luc(G)$, since given
 $\varepsilon>0$, we can choose $j$ large enough and $n\geq 2j+1$ so that
\[|\phi(s_{2j}x_{\alpha,n})-\phi(s_{2j+1}x_{\alpha,n})|<\varepsilon\quad\text{while}\quad
f(s_{2j}x_{\alpha,n})-f(s_{2j+1}x_{\alpha,n})=2.\]

We now prove that $T$ is an approximable $\CB(G)$-interpolation set.
Since the sequence $(s_j)$ is taken in $U$ and $X$ is right $U^2$-uniformly discrete, we see
that the open set $Ux_{\alpha,n}$ of $G$ contains no point from $T$ other than $s_jx_{\alpha,n},$ for $1\le j\le n$. Thus, $T$ is discrete.

Next we check that $T$ is closed. Let $x\notin T$. If for some $\alpha<\kappa,$ $n<\omega,$ and $1\le j\le n$, we have
$s_jx_{\alpha,n}\in Ux,$ then $x_{\alpha,n}\in s_j^{-1}Ux\subseteq
U^2x$. Note also that $s_jx_{\alpha,n}$ may be in $Ux$ for at most one $\alpha$ since  $UT_\alpha\cap UT_{\alpha^\prime}=\emptyset$ for every $\alpha\neq \alpha^\prime<\kappa$.
Thus,   \[Ux\cap T\subseteq\{s_j x_{\alpha, n}\in T \colon
x_{\alpha,n}\in U^2x,\quad\alpha<\kappa\;n<\omega,\; 1\le j\le n\}.\]
Since $X$ is right uniformly discrete and $U^ 2x$
is relatively compact,  the set \[\{x_{\alpha,n}\in U^2x\colon\quad\alpha<\kappa\;n<\omega,\; 1\le j\le n\}=X\cap U^2x\] must be finite. Therefore, there is
$k$ such that  \[Ux\cap T\subseteq \{s_j x_{\alpha,n_i}
\colon 1\leq j\leq n_i,\; i=1,\ldots,k\}.\] We conclude
that $Ux\cap T$ is finite, and so $T$ is closed. Since the
topological space underlying $G$ is  normal, $T$ is  an approximable
$\CB(G)$-interpolation set by Lemma \ref{kappa}.

Corollary   \ref{cor:main:constr} now implies that $\CB(G)/\luc(G)$ contains a linear
 isometric copy of $\ell_\infty(\kappa)$ with
 $\kappa=|X|=\kappa(G)$.
 \end{proof}

The equivalence of the first two statements of the following Corollary were proved by Baker and
Butcher in \cite{BB}, see also \cite{FV} for a different proof.
\begin{corollary} Let $G$ be a locally compact group with a compact covering number $\kappa$. Then the following statements are equivalent.
\begin{itemize}
 \item[(1)] $G$ is neither compact nor discrete.
 \item[(2)] $\CB(G)\ne \luc(G)$.
 \item[(3)] $\CB(G)/\luc(G)$ contains a linear isometric copy of $\ell_\infty(\kappa(G))$.
 \end{itemize}
\end{corollary}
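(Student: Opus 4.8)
The forward implication is immediate: when $G$ is discrete every bounded function is trivially right uniformly continuous, and when $G$ is compact continuity already forces uniform continuity, so in either case $\CB(G)=\luc(G)$ and the quotient is trivial. For the converse, assuming $G$ neither compact nor discrete, the plan is to produce a family $\{T_\alpha\colon\alpha<\kappa\}$ of pairwise disjoint subsets of $G$, with $\kappa=\kappa(G)$, to which Corollary \ref{cor:main:constr} applies with $\A_1(G)=\luc(G)$ and $\A_2(G)=\CB(G)$. Concretely, I must arrange that $UT_\alpha\cap UT_{\alpha'}=\emptyset$ for $\alpha\neq\alpha'$, that each $T_\alpha$ fails to be a $\luc(G)$-interpolation set, and that $T=\bigcup_\alpha T_\alpha$ is an approximable $\CB(G)$-interpolation set; the corollary then delivers the desired isometric copy of $\ell_\infty(\kappa)$.

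Two ingredients feed the construction. First, non-compactness yields a compact symmetric neighbourhood $U$ of $e$ and a right $U^2$-uniformly discrete set $X=\{x_\alpha\colon\alpha<\kappa\}$ of cardinality $\kappa(G)$: in the $\sigma$-compact case one selects points inductively outside the growing compact sets $\bigcup_i U^4x_i$, while for $\kappa>\omega$ one takes $H=\langle U\rangle$ and a transversal of the right cosets of $H$, noting $|G:H|=\kappa$. Second, non-discreteness together with Lemma \ref{lem:seq} provides a faithfully indexed sequence $\{s_n\colon n<\omega\}\subseteq U$ converging to $e$. I then partition $X=\bigcup_{\alpha<\kappa}X_\alpha$ into $\kappa$-many countable blocks, enumerate $X_\alpha=\{x_{\alpha,n}\colon n<\omega\}$, and set $T_{\alpha,n}=\{s_jx_{\alpha,n}\colon 1\le j\le n\}$, $T_\alpha=\bigcup_n T_{\alpha,n}$ and $T=\bigcup_\alpha T_\alpha$.

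The verifications split along the two competing demands. Since $s_j\in U$ and $X$ is $U^2$-uniformly discrete, every point of $UT_\alpha$ lies in $U^2X_\alpha$, so the disjointness of the blocks forces $UT_\alpha\cap UT_{\alpha'}=\emptyset$. To see that $T_\alpha$ is not a $\luc(G)$-interpolation set, I take the function alternating $+1$ on $s_{2j}x_{\alpha,n}$ and $-1$ on $s_{2j+1}x_{\alpha,n}$: any $\phi\in\luc(G)$ agreeing with it would have to separate $s_{2j}x_{\alpha,n}$ and $s_{2j+1}x_{\alpha,n}$ by $2$, yet $s_{2j}(s_{2j+1})^{-1}\to e$, so for large $j$ and any $n\ge 2j+1$ right uniform continuity makes the gap arbitrarily small, a contradiction. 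For the opposite requirement I show $T$ is closed and discrete: if $s_jx_{\alpha,n}\in Ux$ then $x_{\alpha,n}\in U^2x$, and $X\cap U^2x$ is finite because $X$ is uniformly discrete and $U^2x$ is relatively compact, so $Ux\cap T$ is finite for every $x$; hence $T$ is locally finite and closed. As the underlying space of a locally compact group is normal, Lemma \ref{lem:lucint}(i) then guarantees that $T$ is an approximable $\CB(G)$-interpolation set.

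The genuine tension, and the point where the construction must be designed rather than merely assembled, is that failing to be a $\luc(G)$-interpolation set pushes toward accumulation of $T_\alpha$ near the base points $x_{\alpha,n}$, whereas being a $\CB(G)$-interpolation set demands that $T$ be closed and discrete. Both are reconciled by truncating the sequence tail at length $n$: each $T_{\alpha,n}$ stays finite, keeping $T$ locally finite, while letting $n$ range over all of $\omega$ still supplies, for every prescribed closeness, a pair $s_{2j}x_{\alpha,n}$, $s_{2j+1}x_{\alpha,n}$ inside a single block that defeats uniform continuity. With all three hypotheses in place, Corollary \ref{cor:main:constr} produces the linear isometric copy of $\ell_\infty(\kappa)$ with $\kappa=|X|=\kappa(G)$, completing the proof of sufficiency.
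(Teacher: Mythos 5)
Your proposal is correct and follows essentially the same route as the paper: the easy implications are handled exactly as in the paper's one-line proof, and your sufficiency argument reproduces the paper's proof of Theorem \ref{Granirer} (the sets $T_{\alpha,n}=\{s_jx_{\alpha,n}\colon 1\le j\le n\}$, the alternating $\pm 1$ function defeating right uniform continuity, the closed-and-discrete verification, and the appeal to Corollary \ref{cor:main:constr}), which the paper's corollary simply cites. As a minor point in your favour, you correctly invoke Lemma \ref{lem:lucint}(i) for the approximable $\CB(G)$-interpolation property, where the paper's text contains a citation slip (it points to Lemma \ref{kappa} instead).
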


\begin{proof}
$(1)\Longrightarrow (3)$ is proved in the theorem above.
$(3)\Longrightarrow (2)$ is obvious and $(2)\Longrightarrow (1)$ is   clear.
\end{proof}

\remark Theorem \ref{Granirer} implies a fortiori that the space $L^\infty(G)/\luc(G)$ as well as $L^\infty(G)/\wap(G)$ contains
a linear isometric copy of $\ell_\infty(\kappa).$
The arguments used in \cite[Section 4]{BF1}, may be applied again to deduce that the group algebra $L^1(G)$ is extremely
non-Arens regular whenever $\kappa$ is greater or equal to the local weight $w(G)$ of $G$ (this is the least cardinality of an open base at the identity of $G$.)
To obtain the full result, however,  harder work is necessary.  This  is achieved in our recent article \cite{FGenar}.
\endremark

\bigskip

\noindent {\sc Acknowledgement.} This paper was written when the
first author was visiting University of Jaume I in Castell\'on in
December 2010-January 2011. He would like to thank Jorge Galindo for
his hospitality and all the folks at the department of mathematics
in Castell\'on. The work was partially supported by Grant
INV-2010-20 of the 2010 Program for Visiting Researchers of
University Jaume I. This support is also gratefully acknowledged.

\medskip

\bibliographystyle{plain}


\end{document}